\newtheorem{thm}{Theorem}[section]
\newtheorem{alg}{Algorithm}[section]
\newtheorem{lem}[thm]{Lemma}
\theoremstyle{definition}
\newtheorem{defn}{Definition}[section]
\theoremstyle{remark}
\newtheorem{rem}{Remark}[section]
\numberwithin{equation}{section}
\numberwithin{equation}{section}
\newcommand{\red}[1]{\textcolor{black}{#1}}
\begin{document}

\title[IRP DG methods for hyperbolic conservation law systems]
      {Invariant-region-preserving DG methods for multi-dimensional hyperbolic conservation law systems, with an application to compressible Euler equations}
       
       \author{
Yi Jiang and Hailiang Liu
}
\address{Iowa State University, Mathematics Department, Ames, IA 50011} \email{yjiang1@iastate.edu}
\address{Iowa State University, Mathematics Department, Ames, IA 50011} \email{hliu@iastate.edu}

\date{November 1, 2017; revised Feb 28, 2018} 
\thanks{This work was supported by the National Science Foundation under Grant DMS1312636 and by NSF Grant RNMS (Ki-Net) 1107291.
}
\subjclass[2000]{65M60, 35L65, 35L45}

\keywords{Invariant region, hyperbolic conservation laws,  compressible Euler equation,  discontinuous Galerkin methods}


\begin{abstract} 
An invariant-region-preserving (IRP) limiter for multi-dimensional hyperbolic conservation law systems is introduced, as long as the system admits a global invariant region which is a convex set in the phase space. It is shown that the order of approximation accuracy is not destroyed by the IRP limiter, 
\red{provided the cell average is away from the boundary of the convex set}. 
 Moreover, this limiter is explicit, and easy for computer implementation.
 A generic algorithm incorporating the IRP limiter is presented for high order finite volume type schemes. 
 For arbitrarily high order discontinuous Galerkin (DG) schemes to hyperbolic conservation law systems, sufficient conditions are obtained for cell averages to remain in the invariant region provided the projected one-dimensional system shares the same invariant region as the full multi-dimensional hyperbolic system {does}. The general results are then applied to both one and two dimensional compressible Euler equations so to obtain high order IRP DG schemes. Numerical experiments are provided to validate the proven properties of the IRP limiter and the performance of IRP DG schemes for compressible Euler equations.
\end{abstract}

\maketitle
 \section{Introduction}
The multi-dimensional hyperbolic conservation law systems are given by 
  \begin{equation}\label{mcl}
\partial_t \mathbf{w}+\sum_{j=1}^d \partial_{x_j} F_j(\mathbf{w}) =0,  \quad x\in \mathbb{R}^d, \quad t>0
\end{equation}
with the unknown vector $\mathbf{w}\in \mathbb{R}^l$ and the flux function $F_j(\mathbf{w}) \in \mathbb{R}^l$ for $j=1, \cdots d$. We consider the initial value problem for system (\ref{mcl}) with the initial data $ \mathbf{w}(x, 0)=\mathbf{w}_0(x)$. For simplicity, we take periodic or compactly supported boundary conditions.

It is well known that entropy inequalities should be considered for general hyperbolic conservation laws so to single out the physically relevant solution among many weak solutions (see, e.g., \cite{La73}). In application problems, the pointwise range of solutions may be known from physical considerations, instead of  total entropy.  For scalar conservation laws, the entropy solution satisfies a strict maximum principle.  For hyperbolic conservation law systems, the notion of maximum principle does not apply and must be replaced by the notion of invariant region. To solve a conservation law system with possibly discontinuous solutions, one naturally studies the invariant-region-preserving (IRP) property of the numerical schemes.

 In this paper,  we are interested in constructing IRP high order accurate schemes for solving (\ref{mcl}) with an application to compressible Euler equations. The invariant region $\Sigma$ to this system is meant to be a convex set in phase space $\mathbb{R}^l$ so that if the initial data is in the region $\Sigma$, then the solution will remain in $\Sigma$ for all $t>0$. It is highly desirable to construct high order numerical schemes solving (\ref{mcl}) that can preserve the entire invariant region $\Sigma$, which is in general a difficult problem.  In this article, we will discuss the IRP property of arbitrarily high order schemes on shape-regular meshes, following the discontinuous Galerkin (DG) framework developed by Cockburn and Shu \cite{CLS89, CS89, CS98}.   

There are models that feature known invariant regions. For example, the invariant region of  one-dimensional  $2\times 2$ ($l=2$) systems of hyperbolic conservation laws can be described by two Riemann invariants, see e.g.,  \cite{CCS77, Hoff85, Sm94, Fr01}. For general conservation law systems with $l\geq 3$, it is an open question to identify a global invariant region.  When considering the compressible Euler equations of gas dynamics, a natural  condition for the solution is positivity of density and pressure, and the {minimum} principle for the specific entropy \cite{Ta86}.
 In the one dimensional case, the Euler equation has the following form 
  \begin{align}\label{euler}
& \partial_t \mathbf{w}+ \partial_x f(\mathbf{w})=0, \quad t>0, \; x\in \mathbb{R}, 
\end{align}
with $\mathbf{w}=(\rho, m, E)^\top$,  
\begin{align}\label{eq:defenergy}
 f(\mathbf{w})=(m, \rho u^2+ p, (E+p)u)^\top,\;  E=\frac{1}{2}\rho u^2 +\frac{p}{\gamma-1},
\end{align}
 where $\rho$  is the density, $u$ is the velocity, $m=\rho u$ is the momentum, $E$ is the total energy,  $p$ is the pressure, 
 \red{and  $\gamma$ is the ratio of specific heats for the gas/fluid (for most gases, $1<\gamma < 3)$}. The corresponding invariant region is  the following set  
$$
G=\left\{\mathbf{w}\big | \quad   \rho>0, \quad  p>0,  
\quad s \geq  s_0 \right\},
$$ 
where $s=\log (p/\rho^\gamma)$ is the specific entropy, and $s_0:=\inf \limits_x s(\mathbf{w}_0(x))$. 

For nonlinear systems of conservation laws in several space variables with a known invariant region, the IRP property under the Lax-Friedrich schemes was studied by Frid in \cite{Fr95,Fr01}. For the compressible Euler equations, the first order finite volume schemes including Godunov and Lax-Friedrichs schemes are shown to preserve the minimum entropy property \cite{Ta86}. Further second-order limitation techniques  for enforcing the specific entropy bound were proposed in \cite{KP94}, where it was reported that enforcing the minimum entropy principle numerically might damp oscillations in numerical solutions.  
To have the specific entropy well-defined, one would have to guarantee the positivity of density and pressure of the numerical solution, which can be done for a high order finite volume or a DG scheme following \cite{PQ94, PS96, ZS10b, ZXS12}. The main idea of positivity-preserving
techniques for high order DG schemes in \cite{ZS10b, ZXS12} is to find a sufficient condition to preserve the positivity of the cell averages by repeated convex combinations, combined with a conservative limiter which can enforce the sufficient condition without destroying accuracy for 
smooth solutions, as  shown in \cite{ZS10a} for scalar conservation laws. 
In the context of continuous finite elements, the IRP property has been studied by Guermond and Popov \cite{GP16} using the first order approximation to solve general hyperbolic conservation law systems, and then in \cite{GP17} using the second order approximation with convex limiting to solve compressible Euler equations.

A more closely related development is the work by Zhang and Shu \cite{ZS12}, where the authors extended the positivity-preserving high order schemes for compressible Euler equations to preserve the entire $G$, while their limiter for enforcing the lower bound of $s$ is implicit with the limiter parameter solved by the Newton iteration. 
In \cite{Hyp16}, we introduced an explicit  limiter based on  a simple observation that  the convex set $G$ can be rewritten as
 \begin{equation}\label{s1}
\Sigma= \left\{ \mathbf{w}\big | \quad   \rho>0, \quad  p>0,  
\quad q \leq 0 
 \right\},  
 \end{equation}
where $q=(s_0-s)\rho$. Note that $s$ is quasi-concave, but $q$ is convex; actually the fact that $-\rho s$ has a positive definite Hessian matrix can be derived from a general result by Harten in \cite{H83}. Such a reformulation allows us to construct a new IRP limiter in \cite{Hyp16} for compressible Euler equations. The limiter modifies the polynomial solution still through a linear convex combination as  in \cite{ZS12}, 
yet the limiter parameter is defined explicitly due to the convexity of $q$, and concavity of $p$. \red{The question of particular interest is whether it is still high order accurate.   In \cite{Hyp16},  the IRP limiter was proved  to maintain same high order accuracy if the cell average is away from the boundary of the convex set. While  the bound preserving limiter \cite{ZS12} for the entropy function was shown to be high order accurate provided the second order derivative of the entropy function for numerical solutions does not vanish.}

The work \cite{Hyp16} was built upon  \cite{JLPsy},  where we introduced an explicit IRP limiter for DG methods solving the isentropic gas dynamic system (with or without viscosity). Again both convexity and concavity of two Riemann invariants play an essential role in the construction of the explicit IRP limiter {there}.
We observe that the ideas for both the explicit IRP limiters and the high order IRP schemes studied in  \cite{Hyp16, JLPsy} can be readily extended to general hyperbolic conservation law systems (\ref{mcl}) as long as  (i) it features a global invariant region 
 \begin{equation}\label{s2}
\Sigma= \left\{ \mathbf{w}\big | \quad   U(\mathbf{w})\leq 0 \right\},  
\end{equation}
where $U$ is convex, and (ii) the corresponding one-dimensional projected system 
$$
\partial_t \mathbf{w}+\partial_\eta (\sum_{j=1}^d F_j(\mathbf{w})\nu_j)=0, \quad \eta \in \mathbb{R},
$$
where $\nu=(\nu_1, \cdots, \nu_d)$ is any unit vector,  shares the same invariant region $\Sigma$.  The later assumption is needed in order to obtain an IRP scheme. {These} observation{s} led to the present work on high order IRP schemes for general conservation law systems (\ref{mcl}).  This work may also be seen as to some degree an extension of the earlier works on positivity-preserving schemes for compressible Euler equations. {The present emphasis is on the notion of invariant regions. } 

In this paper we have the following objectives: \\
(i) to design an explicit IRP limiter, which can be shown to  maintain high order accuracy of the approximation;\\
(ii)  to identify sufficient conditions under which arbitrarily high order DG schemes 
 feature the desired IRP property 
for  both one and multi-dimensional hyperbolic conservation law systems; \\
(iii) to apply the general result in (ii) to two-dimensional compressible Euler equations, with numerical validations. 

As for (i),  our limiter for preserving $\Sigma$ in (\ref{s2}) is of the following form
\begin{align*}
\tilde{\mathbf{w}}_h(x)=\theta\mathbf{w}_h(x)+(1-\theta)\bar{\mathbf{w}}_h, \quad \theta =\min \left\{1,    \frac{U(\bar{\mathbf{w}}_h)}{U(\bar{\mathbf{w}}_h)-U^{\max}_h}\right\},
\end{align*}
where $\mathbf{w}_h(x)$ is a polynomial over domain $K$,  its average $\bar{\mathbf{w}}_h$ lies in the interior of $\Sigma$,  and $U^{\max}_h=\max \limits_{x\in K}U(\mathbf{w}_h(x))$. This reconstructed polynomial is shown to maintain the same order of accuracy as $\mathbf{w}_h(x)$,  \red{provided 
$U(\mathbf{w}_h(x))$ is not close to zero.}  

As for (ii), 
we present our analysis for general conservation law systems in order to illustrate the ways in which the IRP property can be ensured for high order finite volume type schemes. The first ingredient is a one-dimensional IRP numerical flux, such as  Godunov, Lax-Friedrichs, and Harten-Lax-van Leer \cite{HLL},  with which 
the first order finite volume scheme has the IRP property under certain CFL condition. This allows us to further find a sufficient condition to keep the cell averages in $\Sigma$ by repeated convex combinations, in the same way as that has been well established for positivity-preserving schemes, see e.g.  \cite{ZS10b, ZS12}. In the present setting, we use first order schemes with an IRP flux which can keep numerical solutions in $\Sigma$ as building blocks, and show that  high order spatial discretization with forward Euler can be written as  a convex combination of first order IRP schemes, thus will keep $\Sigma$ provided certain sufficient conditions are satisfied. The IRP limiter is then used to enforce the sufficient condition. 

For multi-dimensional conservation law systems we will show that repeated convex combinations can still be achieved, as long as a positive  convex decomposition for the cell averages is available, using the {Gauss}-Lobatto quadrature points on cell interfaces, and some interior points chosen so that the decomposition {weights are} strictly positive. The numerical solutions need to be within in $\Sigma$ only on a test set consisting of points also used for the cell average decomposition.   

As for (iii), we first show that the projected system of the two dimensional Euler systems indeed share the same invariant region $\Sigma$ in (\ref{s1}). The CFL conditions for the IRP DG schemes on rectangular and triangular meshes are derived, respectively, from our general result  for multi-dimensional hyperbolic conservation law systems, while using the test sets identified already in \cite{ZS10b} and \cite{ZXS12}.

Finally, we should mention that in our analysis we only show the ways of numerically preserving $\Sigma$ for the forward Euler time discretization, yet 
the high order SSP time discretizations  (\cite{SO88}) will keep the validity of our results since they are convex combinations of forward Euler.

This paper is organized as follows: in Sect. 2  we present an explicit invariant-region-preserving (IRP) limiter and prove that for smooth solutions the order of approximation accuracy is not destroyed by the IRP limiter in general cases, followed by a generic IRP algorithm for high order schemes.  Then, in Sect. 3, we first show all three popular numerical fluxes can be made an IRP flux, and then identify sufficient conditions including both a test set and the CFL condition, to obtain IRP DG schemes in one and higher space dimensions for arbitrary shape-regular meshes. Sect. 4  is devoted to an application to high order DG schemes for two dimensional  compressible Euler equations. In Sect. 5, we present  extensive numerical tests. Some concluding remarks are given in Sect. 6.  In Appendix A, we prove Lemma 2.1 for the compressible Euler equations for which the pressure is not strictly concave. In Appendix B, we present the detailed proof of Lemma 3.2 which states that HLLC flux is an IRP flux.

\section{The invariant-region-preserving limiter}
For the general multi-dimensional system of conservation laws (\ref{mcl}), we assume it admits  an invariant region $\Sigma$, which is a convex set in the phase space $\mathbb{R}^l$, characterized by  
\begin{align}\label{IR}
\Sigma=\{\mathbf{w}\big | \quad U(\mathbf{w})\leq 0\}
\end{align}
with $U$ being convex.  In what follows, we use 
$$
\Sigma _0=\{\mathbf{w}\big | \quad U(\mathbf{w})< 0\}
$$
to denote the interior of $\Sigma$.

For any bounded domain $K$, we define the average of $\mathbf{w}(x)$ by
\begin{align*}
\bar{\mathbf{w}}=\frac{1}{|K|}\int _K\mathbf{w}(x)dx,
\end{align*}
where $|K|$ is the measure of $K$. The following lemma shows that such an averaging operator is a contraction, which enables us to use the cell average as a reference to construct the IRP limiter.

\begin{lem} \label{U+} 
Let $\mathbf{w}(x)$ be non-trivial \red{piecewise continuous vector functions}.  
If $\mathbf{w}(x)\in \Sigma$ for all $x\in K\subset \mathbb{R}^d$, and $U$ is strictly convex,  then $\bar{\mathbf{w}}\in \Sigma_0$ for any bounded domain $K$.
\end{lem}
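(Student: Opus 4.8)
The plan is to recognize this as a strict form of Jensen's inequality: since $U$ is convex and $U(\mathbf{w}(x))\le 0$ pointwise, the average must already satisfy $U(\bar{\mathbf{w}})\le 0$, and the strict convexity of $U$ together with the non-triviality (non-constancy) of $\mathbf{w}$ will upgrade this to a strict inequality, placing $\bar{\mathbf{w}}$ in the interior $\Sigma_0$.

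First I would dispose of the non-strict containment. Because $\mathbf{w}$ is piecewise continuous on the bounded domain $K$, both $\mathbf{w}$ and $U\circ\mathbf{w}$ are integrable, so $\bar{\mathbf{w}}$ is well defined and lies in the closed convex hull of the range of $\mathbf{w}$; in particular $\bar{\mathbf{w}}$ sits in the interior of the domain of $U$, so $U$ admits a subgradient there. Applying Jensen's inequality gives
\[
U(\bar{\mathbf{w}}) \le \frac{1}{|K|}\int_K U(\mathbf{w}(x))\,dx \le 0,
\]
the last step using $U(\mathbf{w}(x))\le 0$ for every $x\in K$. Thus $\bar{\mathbf{w}}\in\Sigma$, and it remains to exclude the boundary.

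For the strict conclusion I would argue by contradiction, assuming $U(\bar{\mathbf{w}})=0$, i.e.\ $\bar{\mathbf{w}}\in\partial\Sigma$. Let $L(\mathbf{w})=U(\bar{\mathbf{w}})+\xi\cdot(\mathbf{w}-\bar{\mathbf{w}})$ be a supporting affine function at $\bar{\mathbf{w}}$, where $\xi$ is a subgradient of $U$ at $\bar{\mathbf{w}}$, so that $U(\mathbf{w})\ge L(\mathbf{w})$ for all $\mathbf{w}$. Integrating this inequality over $K$ and using $\int_K(\mathbf{w}(x)-\bar{\mathbf{w}})\,dx=0$ yields
\[
\frac{1}{|K|}\int_K U(\mathbf{w}(x))\,dx \ge U(\bar{\mathbf{w}}) = 0.
\]
Combined with $\int_K U(\mathbf{w})\,dx\le 0$, this forces $U(\mathbf{w}(x))=L(\mathbf{w}(x))$ for almost every $x$; that is, $\mathbf{w}(x)$ lies in the contact set $\{\mathbf{w}:U(\mathbf{w})=L(\mathbf{w})\}$ almost everywhere.

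The key step, and the place where strict convexity enters decisively, is to show this contact set is the single point $\{\bar{\mathbf{w}}\}$. If $U(a)=L(a)$ and $U(b)=L(b)$ for $a\ne b$, then at the midpoint $m=(a+b)/2$ convexity of $U$ and affineness of $L$ give $U(m)\le \tfrac12(U(a)+U(b))=\tfrac12(L(a)+L(b))=L(m)\le U(m)$, hence $U(m)=\tfrac12(U(a)+U(b))$, contradicting strict convexity. Therefore $\mathbf{w}(x)=\bar{\mathbf{w}}$ for almost every $x$, which contradicts the assumption that $\mathbf{w}$ is non-trivial. We conclude $U(\bar{\mathbf{w}})<0$, i.e.\ $\bar{\mathbf{w}}\in\Sigma_0$. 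The main obstacle is the measure-theoretic bookkeeping rather than any deep estimate: I prefer this supporting-hyperplane argument over quoting a strict Jensen inequality off the shelf because it cleanly isolates where strict convexity and the non-triviality hypothesis are used, and it avoids imposing any differentiability on $U$ (the reformulated Euler constraint $U$ of \eqref{s1} is only convex, with strict convexity failing along the pressure constraint, which is precisely why that case is deferred to Appendix A).
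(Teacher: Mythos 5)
Your proof is correct, and it follows the same overall skeleton as the paper's: Jensen's inequality gives $U(\bar{\mathbf{w}})\le 0$, and the boundary case $U(\bar{\mathbf{w}})=0$ is excluded by showing it would force $\mathbf{w}\equiv\bar{\mathbf{w}}$ a.e., contradicting non-triviality. The difference lies in how strict convexity is exploited in that second step. The paper writes a second-order Taylor expansion $U(\mathbf{w}(x))=U(\bar{\mathbf{w}})+\nabla_{\mathbf{w}}U(\bar{\mathbf{w}})\cdot\xi+\xi^\top H\xi$, integrates so the gradient term drops out, and concludes $\int_K\xi^\top H\xi\,dx=0$, whence $\xi=0$ a.e. by positive definiteness of the Hessian; this implicitly assumes $U$ is twice differentiable (and, as written, the expansion is exact only up to the usual remainder bookkeeping). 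You instead use a supporting affine function $L$ at $\bar{\mathbf{w}}$, squeeze $\int_K(U-L)\circ\mathbf{w}\,dx$ to zero from both sides, and then show by a midpoint argument that the contact set $\{U=L\}$ of a strictly convex function with a supporting hyperplane is a single point. Your route buys two things: it needs no differentiability of $U$ (only the existence of a subgradient), and it makes the equality case of Jensen fully rigorous without Hessian manipulations; what it gives up is the quantitative flavor of the Hessian computation, which is what the paper reuses in Appendix~A when it handles the pressure constraint of the Euler system, where strict convexity fails and the kernel of the (semi-definite) Hessian must be inspected explicitly. Both arguments are sound for the lemma as stated.
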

\begin{proof}
Since $U$ is convex, using Jensen's inequality and the assumption, we have
\begin{align*}
U(\bar{\mathbf{w}})=U\left(\frac{1}{|K|}\int _K\mathbf{w}(x)dx\right)\leq \frac{1}{|K|}\int _K U(\mathbf{w}(x))dx\leq 0.
\end{align*}
With this, we can show $U(\bar{\mathbf{w}})<0$. Otherwise if $U(\bar{\mathbf{w}})=0$, we must have $U(\mathbf{w}(x))=0$ for \red{almost} all $x\in K $; that is
\begin{align*}
U(\bar{\mathbf{w}})=U(\mathbf{w}(x))\quad \red{a.e. \; {\rm in}\; K.}
\end{align*}
This,  upon taking cell average on both sides,  gives
\begin{align*}
U(\bar{\mathbf{w}})=\frac{1}{|K|}\int _K U(\mathbf{w}(x))dx.
\end{align*}
By taking the Taylor expansion around $\bar{\mathbf{w}}$, we have
\begin{align*}
U(\mathbf{w}(x))=U(\bar{\mathbf{w}})+\triangledown _{\mathbf{w}}U(\bar{\mathbf{w}})\cdot \xi+\xi ^\top H\xi, \quad \forall x\in K, \quad \xi:=\mathbf{w}(x)-\bar{\mathbf{w}},
\end{align*}
which upon integration yields $\frac{1}{|K|}\int _K\xi ^\top H\xi dx=0$, where $H$ is the Hessian matrix of $U$. This when combined with the strict convexity of $U$ ensures that $\mathbf{w}(x)\equiv \bar{\mathbf{w}}$ almost everywhere, which contradicts the assumption. 
\end{proof}


\subsection{The Limiter}
Let $\mathbf{w}_h(x)$ be a sequence of vector polynomials over $K$, which is a high order accurate approximation to the smooth function $\mathbf{w}(x)\in \Sigma$. We assume $\bar{\mathbf{w}}_h\in \Sigma _0$, but $\mathbf{w}_h(x)$ is not entirely located in $\Sigma$, then we can modify the polynomial $\mathbf{w}_h(x)$ with reference to $\bar{\mathbf{w}}_h$
through a linear convex combination:
\begin{align}\label{modp}
\tilde{\mathbf{w}}_h(x)=\theta\mathbf{w}_h(x)+(1-\theta)\bar{\mathbf{w}}_h,
\end{align}
where $\theta \in (0,1]$ is defined by 
\begin{align}\label{limiter}
\theta =\min \{1,\theta _1\},
\end{align}
where
\begin{align}\label{theta}
\theta _1=\frac{U(\bar{\mathbf{w}}_h)}{U(\bar{\mathbf{w}}_h)-U^{\max}_h},
\end{align}
with 
\begin{align}\label{minmax}
U^{\max}_h=\max \limits_{x\in K}U(\mathbf{w}_h(x))>0.
\end{align}
Notice that since $\bar{\mathbf{w}}_h\in \Sigma _0$, we have $U(\bar{\mathbf{w}}_h)<0$. Also $U(\bar{\mathbf{w}}_h)<U^{\max}_h$. 
Therefore, $\theta _1$ is well-defined and positive.

As for the above limiter,  we have the following conclusion.


\begin{thm}  \label{th2.2}If $\bar{\mathbf{w}}_h\in \Sigma _0$, then $\tilde{\mathbf{w}}_h(x)\in \Sigma$, for all $x\in K$.
Moreover, the reconstructed polynomial preserves high order accuracy, i.e.,  if \red{$\| \mathbf{w}_h-\mathbf{w}\| _{\infty} \leq 1$}, then 
\begin{align*}
\red{
\| \tilde{\mathbf{w}}_h-\mathbf{w}\|_{\infty}\leq \frac{C}{|U(\bar{\mathbf{w}}_h)|}\| \mathbf{w}_h-\mathbf{w}\| _{\infty},
}
\end{align*}
where $C>0$ depends on $\mathbf{w}$ and $\Sigma$.
\end{thm}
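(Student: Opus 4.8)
The plan is to treat the two assertions separately, first the invariant-region membership and then the accuracy estimate. For the first, I would apply the convexity of $U$ directly to the convex combination (\ref{modp}). Since $\tilde{\mathbf{w}}_h(x)=\theta\mathbf{w}_h(x)+(1-\theta)\bar{\mathbf{w}}_h$ with $\theta\in(0,1]$, convexity gives $U(\tilde{\mathbf{w}}_h(x))\le \theta U(\mathbf{w}_h(x))+(1-\theta)U(\bar{\mathbf{w}}_h)$. If $U^{\max}_h\le 0$ the limiter sets $\theta=1$ and every $\mathbf{w}_h(x)$ already lies in $\Sigma$; otherwise $\theta=\theta_1<1$, and I would use $U(\mathbf{w}_h(x))\le U^{\max}_h$ to obtain $U(\tilde{\mathbf{w}}_h(x))\le \theta_1 U^{\max}_h+(1-\theta_1)U(\bar{\mathbf{w}}_h)$. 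A one-line computation with the explicit value $\theta_1=U(\bar{\mathbf{w}}_h)/(U(\bar{\mathbf{w}}_h)-U^{\max}_h)$ shows this upper bound equals $0$ exactly; this is precisely the algebraic identity the definition (\ref{theta}) was engineered to produce, and it settles $\tilde{\mathbf{w}}_h(x)\in\Sigma$ for all $x\in K$.

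For the accuracy estimate I would start from the identity $\tilde{\mathbf{w}}_h(x)-\mathbf{w}_h(x)=(1-\theta)(\bar{\mathbf{w}}_h-\mathbf{w}_h(x))$, so that by the triangle inequality it suffices to control $(1-\theta)\|\bar{\mathbf{w}}_h-\mathbf{w}_h\|_\infty$. When $\theta=1$ there is nothing to prove; when $\theta=\theta_1$ I would rewrite $1-\theta_1=U^{\max}_h/(U^{\max}_h+|U(\bar{\mathbf{w}}_h)|)\le U^{\max}_h/|U(\bar{\mathbf{w}}_h)|$, using $U(\bar{\mathbf{w}}_h)<0$. The crux is then to bound $U^{\max}_h$ by the approximation error. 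Here I would exploit that the exact state satisfies $U(\mathbf{w}(x))\le 0$: choosing $x^\ast$ where the maximum defining $U^{\max}_h$ is attained, $U^{\max}_h=U(\mathbf{w}_h(x^\ast))\le U(\mathbf{w}_h(x^\ast))-U(\mathbf{w}(x^\ast))$, and local Lipschitz continuity of $U$ gives $U^{\max}_h\le L\|\mathbf{w}_h-\mathbf{w}\|_\infty$.

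The hypothesis $\|\mathbf{w}_h-\mathbf{w}\|_\infty\le 1$ plays a structural role: it confines $\mathbf{w}_h$ to a fixed bounded neighborhood of the compact range of the smooth function $\mathbf{w}$, on which $U$ is Lipschitz with some constant $L$, on which $|U|$ is bounded by some $M$, and on which $\|\bar{\mathbf{w}}_h-\mathbf{w}_h\|_\infty$ is bounded by a constant $D$ (using that $\bar{\mathbf{w}}_h$ lies in the convex hull of the range up to an $O(1)$ perturbation); all three constants depend only on $\mathbf{w}$ and $\Sigma$. Combining these gives $\|\tilde{\mathbf{w}}_h-\mathbf{w}_h\|_\infty\le (LD/|U(\bar{\mathbf{w}}_h)|)\|\mathbf{w}_h-\mathbf{w}\|_\infty$, and one more triangle inequality adds a term $\|\mathbf{w}_h-\mathbf{w}\|_\infty$. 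To fold this extra term into the claimed single factor $C/|U(\bar{\mathbf{w}}_h)|$, I would invoke $1\le M/|U(\bar{\mathbf{w}}_h)|$, valid since $|U(\bar{\mathbf{w}}_h)|\le M$; taking $C=LD+M$ then finishes the estimate.

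The main obstacle I anticipate is the bound on $U^{\max}_h$: everything hinges on comparing the maximizer of $U(\mathbf{w}_h(\cdot))$ against the exact state at the \emph{same} point, where $U\le 0$, which converts $U^{\max}_h$ into a difference of $U$-values at two nearby points and hence into a multiple of the approximation error. The other slightly delicate point is making precise that the constants are uniform; the assumption $\|\mathbf{w}_h-\mathbf{w}\|_\infty\le1$ is exactly what forces all relevant quantities onto one fixed compact set, so that $L$, $M$, $D$ do not degrade as $h\to0$. Note that the factor $1/|U(\bar{\mathbf{w}}_h)|$ is a genuine feature of the bound and reflects the stated caveat that the order of accuracy is retained only when the cell average stays away from $\partial\Sigma$.
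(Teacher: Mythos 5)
Your proposal is correct and follows essentially the same route as the paper's proof: convexity of $U$ applied to the convex combination for the membership claim, the identity $\tilde{\mathbf{w}}_h-\mathbf{w}_h=(1-\theta)(\bar{\mathbf{w}}_h-\mathbf{w}_h)$ together with the bound $U^{\max}_h\le \|\triangledown U\|_\infty\|\mathbf{w}_h-\mathbf{w}\|_\infty$ (exploiting $U(\mathbf{w})\le 0$) for the accuracy claim, and the hypothesis $\|\mathbf{w}_h-\mathbf{w}\|_\infty\le 1$ to keep all constants uniform. Your explicit absorption of the extra triangle-inequality term via $1\le M/|U(\bar{\mathbf{w}}_h)|$ is in fact slightly more careful than the paper's presentation, which leaves that step implicit.
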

\begin{proof}  The claim that the constructed polynomial lies within $\Sigma$ is implied by the definition of $\theta$.   In fact, for the case $\theta=\theta_1$ with the convexity of $U$, we have
\begin{align*}
U(\tilde{\mathbf{w}}_h(x))\leq &\theta  U(\mathbf{w}_h(x))+(1-\theta)U(\bar{\mathbf{w}}_h)\\
\leq & \theta_1  U_h^{\max}+(1-\theta_1)U(\bar{\mathbf{w}}_h)=0.
\end{align*}
For the accuracy estimate,  we consider the case when $\theta \neq 1$. We only need to prove
\begin{align}\label{2.6}
\| \tilde{\mathbf{w}}_h-\mathbf{w}_h\| _{\infty}\leq \frac{C}{|U(\bar{\mathbf{w}}_h)|} \| \mathbf{w}_h -\mathbf{w}\| _{\infty},
\end{align}
from which the conclusion follows by using the triangle inequality. 
From the reconstruction, it follows that
\begin{align*}
\| \tilde{\mathbf{w}}_h-\mathbf{w}_h\|_{\infty} = &(1-\theta)\| \bar{\mathbf{w}}_h-\mathbf{w}_h\| _{\infty}\\
=&\frac{\| \bar{\mathbf{w}}_h-\mathbf{w}_h\|_{\infty}}{U^{\max}_h-U(\bar{\mathbf{w}}_h)}U^{\max}_h.
\end{align*}
Since $U(\mathbf{w})\leq 0$ for $\mathbf{w}(x)\in \Sigma$, we have
\begin{align*}
U^{\max}_h\leq \max _{x\in K}\left(U(\mathbf{w}_h))-U(\mathbf{w}) \right)\leq \| \triangledown U\|_{\infty}\| \mathbf{w}-\mathbf{w}_h\| _{\infty}.
\end{align*}
Also, since $U^{\max}_h>0$ when $\theta < 1$, we have
\begin{align*}
U^{\max}_h-U(\bar{\mathbf{w}}_h)> -U(\bar{\mathbf{w}}_h)>0.
\end{align*}
According to the assumption that $\mathbf{w}_h$ is an approximation to $\mathbf{w}$, we have
\begin{align*}
\| \bar{\mathbf{w}}_h-\mathbf{w}_h\|_{\infty} =  \| \bar{\mathbf{w}}_h -\bar{\mathbf{w}}+\bar{\mathbf{w}}-\mathbf{w}+\mathbf{w}-\mathbf{w}_h\|_{\infty}\leq 2\| \mathbf{w}-\mathbf{w}_h\| _{\infty}+\| \mathbf{w}-\bar{\mathbf{w}}\|_{\infty}
\end{align*}
Therefore,  we arrive at (\ref{2.6}) with  $C$ given by
\red{
\begin{align*}
C =2 \|\triangledown U\|_{\infty} \left( 1+\| \mathbf{w}\|_{\infty}\right),
\end{align*}
}
which is positive and finite, depending only on $\mathbf{w}$,  as well as the invariant region $\Sigma$ through $U$.
\end{proof}

\begin{rem}
We would like to point out that when $\bar{\mathbf{w}}_h$ is close enough to the boundary of $\Sigma$, the factor $C/|U(\bar{\mathbf{w}}_h)|$ can become large, indicating the possibility of accuracy deterioration in some cases.
\end{rem}

In practice, $\Sigma$ is usually given by several pieces of convex functions in the form of
\begin{align*}
\Sigma=\bigcap \limits^M_{i=1}\{\mathbf{w}\big | \quad U_i(\mathbf{w})\leq 0\}.
\end{align*}
Then the limiter parameter given in (\ref{theta}) needs to be modified as 
\begin{align}\label{theta1}
\theta =\min \{1,\theta _1, \cdots ,\theta _M\},
\end{align}
where
\begin{align}\label{minmax1}
\theta _i =\frac{U_i(\bar{\mathbf{w}}_h)}{U_i(\bar{\mathbf{w}}_h)-U^{\max}_{i,h}}, \quad U^{\max}_{i,h}=\max _{x\in K}U_i(\mathbf{w}_h(x)).
\end{align}
It can be shown that Theorem \ref{th2.2} remains valid for the general case as such. 

Here we present two such examples for a convex invariant region. \\

\noindent{\bf Example 1.} The first example is the one dimensional isentropic gas dynamic system in Euler coordinates, i.e. the system (\ref{euler}) with $\mathbf{w}=(\rho, m)^\top$ and $f(\mathbf{w})=(\rho u, \rho u^2+p(\rho))^\top$, where $p(\rho)=\rho ^\gamma$, $\gamma >1$ and $m=\rho u$. The corresponding invariant region is given by
\begin{align*}
\Sigma =\{(\rho, m)^\top \big | \quad r\leq r_0, s\geq s_0\},
\end{align*}
where $r_0=\sup \limits_xr(\rho _0(x),m_0(x))$, $s_0=\inf \limits_xs(\rho _0(x),m_0(x))$ and
\begin{align*}
r=u+\frac{2\sqrt{\gamma}}{\gamma -1}\rho ^{\frac{\gamma -1}{2}}, \quad s=u-\frac{2\sqrt{\gamma}}{\gamma -1}\rho ^{\frac{\gamma -1}{2}},
\end{align*}
are two Riemann invariants. {We point out that $\Sigma$  is a closed domain in $\{(\rho, m)^\top| \; \rho\geq 0\}$}.  \\

\red{Note for the pressure-less Euler equation,  i.e. the system (\ref{euler}) with $\mathbf{w}=(\rho, m)^\top$ and $f(\mathbf{w})=(\rho u, \rho u^2)^\top$, where $m=\rho u$,  two Riemann invariants $r$ and $s$ become identical. In such case, the invariant region is  
\begin{align}\label{pl}
\Sigma=\{ (\rho, m)^\top \big|\quad  \rho>0, \;  s_0\rho \leq m\leq r_0 \rho\},
\end{align}
where $r_0=\sup \limits_x (u_0(x))$ and $s_0=\inf \limits_x (u_0(x)).$  
 }\\

\noindent{\bf Example 2.}  Another example is the compressible Euler equations, for which an invariant region is 
$$
\Sigma=\{ (\rho, \mathbf{m}, E)^\top \big|\quad \rho>0, \; p>0, \;q \leq 0\},
$$
with $p$ and $q$ defined by 
$$
p=(\gamma-1) (E-\frac{1}{2}\rho|\mathbf{u}|^2), \quad q=(s_0-s)\rho,\; \gamma>1,
$$
where $\mathbf{u}$ is the velocity, $\mathbf{m}=\rho \mathbf{u}$, $s=\log \left(\frac{p(x)}{\rho ^{\gamma}(x)} \right)$, $s_0=\inf \limits_x\log \left(\frac{p_0(x)}{\rho ^{\gamma}_0(x)} \right)$, and $(\rho _0, \mathbf{m}_0,E_0)^\top$ is the initial data. 

\begin{rem} When $U$ is not strictly convex, the result in Lemma \ref{U+} may still hold true. The proof needs to be modified based on further details of $U$.    For example,  for the compressible Euler equation, $p$ is concave but not strictly concave. We present an illustrative proof in Appendix A.    
\end{rem}

\subsection{Algorithm} 
Let $\mathbf{w}^{n}_h$ be  the numerical solution at $n$-th  time step generated from a high order scheme of an abstract form  
\begin{align}\label{adg}
\mathbf{w}^{n+1}_h = \mathcal{L}(\mathbf{w}^n_h), 
\end{align} 
starting with initial data $\mathbf{w}^0_h$, where $\mathbf{w}^n_h=\mathbf{w}^n_h(x)\in V_h$, and $V_h$ is a finite element space of piecewise polynomials of degree $k$ in each computational cell $K$, i.e.,
\begin{align*}
V_h=\{v: v|_{K}\in \mathbb{P}^k(K)\}.
\end{align*} 
Assume $\lambda =\frac{\Delta t}{|K|}$ is the mesh ratio. The IRP algorithm can be stated as follows: 

\begin{alg}\label{Algo} 
Provided that scheme (\ref{adg}) has the following property:  there exists $\lambda_0$, and a test set $S$ such that if 
\begin{align*}
\lambda \leq \lambda_0 \quad \text{and} \quad \mathbf{w}^n_h(x) \in \Sigma \quad \text{for} \quad x\in S
\end{align*}
then
\begin{align*}
 \bar{ \mathbf{w}}^{n+1}_h \in  {\Sigma}_0;
\end{align*}
then the IRP limiter can be applied, with $K$ replaced by $S_K:=S\cap K$ in (\ref{minmax1}), i.e.,
\begin{align}\label{mthe}
U^{\max}_h=\max _{x\in S_K}U(\mathbf{w}_h(x)),
\end{align}
through the following algorithm: \\

\textbf{Step 1:}  Initialization: take the piecewise $L^2$ projection of $\mathbf{w}_0$ onto $V_h$, such that
\begin{align*}
\langle \mathbf{w}^0_{h}-\mathbf{w}_0, \phi\rangle =0, \quad \forall \phi \in V_h.
\end{align*}

\textbf{Step 2:}  Imposing the modified limiter (\ref{modp}), (\ref{limiter}) with (\ref{mthe}) on ${\mathbf{w}}^n_h$ for $n=0,1,\cdots$ to obtain $\tilde{\mathbf{w}}^n_h$.

\textbf{Step 3:} Update by the scheme: 
\begin{align*}
\mathbf{w}^{n+1}_h=\mathcal{L}(\tilde{\mathbf{w}}_h^n).
\end{align*}
\\
Return to \textbf{Step 2}.
\end{alg}

\begin{rem}
For given initial data $\mathbf{w}_0 \in \Sigma$, its cell average lies strictly within $\Sigma_0$. On the other hand, $\mathbf{w}^0_h$ may not lie entirely in $\Sigma$, but it has the same cell average as the initial data due to the $L^2$ projection. Therefore, the IRP limiter can already be applied to $\mathbf{w}^0_h$ (included in Step 2 in the algorithm). 
\end{rem}

\section{IRP DG schemes}
In this section, we discuss some sufficient conditions for high order DG schemes solving the general conservation laws to be invariant-region-preserving.  

\subsection{One dimensional case} We begin with the one-dimensional system of conservation laws of the form 
\begin{align}\label{1dcl}
 \partial_t \mathbf{w} + \partial_x f(\mathbf{w}) =0, 
\end{align}
where $f$ is a smooth vector flux function.  
A first order finite volume scheme on a cell $I_j=[x_{j-1/2}, x_{j+1/2}]$ takes the following form  
\begin{align}\label{1stFV}
\mathbf{w}^{n+1}_j=\mathbf{w}^n_j-\lambda\left(\hat{f}_{j+1/2}-\hat{f}_{j-1/2} \right),
\end{align}
where $\mathbf{w}^n_j$ is the approximation to the average of $\mathbf{w}(x)$ on $I_j=[x_{j-\frac{1}{2}},x_{j+\frac{1}{2}}]$ at $n$-th time level $t^n$.  
$\hat{f}_{j+1/2}$ is a single-valued numerical flux at the element interface, depending on the values of numerical solution from both sides
$$
\hat{f}_{j+1/2} = \hat{f}(\mathbf{w}^n_j,\mathbf{w}^n_{j+1}).
$$
In general, $\hat{f}_{j+1/2}$ is derived from some Riemann solvers (exact or approximate).  

\begin{defn} A consistent numerical flux $\hat{f}_{j+1/2}$ is called an IRP flux for (\ref{1dcl}) if there exists $c_0$, such that for $\sigma \lambda \leq c_0$, $\mathbf{w}^n_j$, $\mathbf{w}^n_{j\pm 1}\in \Sigma$ implies  $\mathbf{w}^{n+1}_j\in \Sigma _0$, where $\sigma$ is the global maximum wave speed of the system (\ref{1dcl}).
\end{defn}

For scalar conservation laws, the invariant region is simply an interval ensured by the maximum principle.  It is known that the monotone flux is maximum-principle-preserving, {see e.g. \cite{ZS10a}},  therefore it is also the IRP flux.  For systems, most popular numerical fluxes rely on Riemann solvers, which exactly compute or approximate the solution of the Riemann problem, i.e., (\ref{1dcl}) with initial data, 
\begin{align}\label{re}
\mathbf{w}(x, 0)=
\left\{\begin{array}{ll}
\mathbf{w}_l, & x<0\\ 
\mathbf{w}_r, & x>0.
\end{array}
\right.
\end{align}
The solution of the Riemann problem is self-similar. Assume that the Riemann solver also has some self-similar structure and is denoted by 
$R(\xi; \mathbf{w}_l, \mathbf{w}_r)$ with $\xi=\frac{x}{t}$.  Let $\sigma_l$ and $\sigma_r$ be the leftmost
and rightmost wave speed such that $R=\mathbf{w}_l$ for $\xi \leq \sigma_l$ and $R=\mathbf{w}_r$ for $\xi \geq  \sigma_r$. Let $S_l \leq \min\{\sigma_l, 0\}$ and $S_r \geq \max\{\sigma_r, 0\}$.  Integration of (\ref{1dcl})  over $[S_lt, S_rt] \times [0, t]$, divided by $(S_r-S_l)t$,  leads to the following 
identity
\begin{align}\label{id}
\frac{1}{S_r-S_l}\int_{S_l}^{S_r} {R}(\xi; \mathbf{w}_l, \mathbf{w}_r) d\xi =\frac{S_r\mathbf{w}_r-S_l\mathbf{w}_l}{S_r-S_l}- \frac{f_r-f_l}{S_r-S_l},
\end{align}
where $f_r=f(\mathbf{w}_r)$ and $f_l=f(\mathbf{w}_l)$. This identity is useful in finding sufficient conditions for each of the following numerical fluxes to be an IRP flux. 
\begin{enumerate}
\item {Godunov flux}: 
\begin{align}\label{Gdnv}
\hat{f}(\mathbf{w}_l,\mathbf{w}_r)=f(R(0; \mathbf{w}_l,\mathbf{w}_r));
\end{align}
\item Lax-Friedrich flux:
\begin{align}\label{gLF}
\hat{f}(\mathbf{w}_l,\mathbf{w}_r) =\frac{1}{2}\left(f(\mathbf{w}_l)+f(\mathbf{w}_r)-\sigma (\mathbf{w}_r-\mathbf{w}_l) \right);
\end{align}
\item HLL flux \cite{HLL}:
\begin{align}\label{HLL}
\hat{f}(\mathbf{w}_l,\mathbf{w}_r)=
\begin{cases}
f(\mathbf{w}_l), \quad &\text{if }0\leq \sigma_l, \\
\frac{\sigma _rf(\mathbf{w}_l)-\sigma _lf(\mathbf{w}_r)+\sigma _l\sigma _r(\mathbf{w}_r-\mathbf{w}_l)}{\sigma _r - \sigma _l}, \quad &\text{if }\sigma _l\leq 0\leq \sigma _r, \\
f(\mathbf{w}_r), \quad &\text{if }0\geq \sigma _r.
\end{cases}
\end{align}
\end{enumerate}
\begin{lem}\label{lem:IRPflux}
\noindent
\begin{itemize}
\item[(i)] For $c_0=1$,  both Godunov flux and Lax-Friedrich flux are IRP fluxes for (\ref{1dcl});
\item[(ii)] For $c_0=\frac{1}{2}$, the HLL flux is an IRP flux for  (\ref{1dcl}).
\end{itemize}
\end{lem}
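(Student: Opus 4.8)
The plan is to reduce all three fluxes to a single building block: every intermediate (``star'') state produced by an exact Riemann fan already lies in $\Sigma_0$. For data $\mathbf{w}_l,\mathbf{w}_r\in\Sigma$, let $R(\xi;\mathbf{w}_l,\mathbf{w}_r)$ be the self-similar entropy solution of the Riemann problem and let $[S_l,S_r]$ bracket its wave fan, $S_l\le\sigma_l\le\sigma_r\le S_r$. Because the projected one-dimensional system is assumed to admit the same invariant region $\Sigma$, we have $R(\xi;\mathbf{w}_l,\mathbf{w}_r)\in\Sigma$ for every $\xi$; hence the right-hand side of (\ref{id}) is the average $\frac{1}{S_r-S_l}\int_{S_l}^{S_r}R\,d\xi$ of a $\Sigma$-valued function, which lies in $\Sigma$ by Jensen's inequality and in fact in $\Sigma_0$ by the contraction property proved in \lemref{U+}. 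Thus each star state is a genuine cell average of something valued in $\Sigma$, and each claim reduces to writing the update $\mathbf{w}^{n+1}_j$ of (\ref{1stFV}) as a convex combination of such averages and of $\mathbf{w}^n_j\in\Sigma$, after which convexity of $U$ closes the argument.

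For the Godunov flux I would argue by exactness rather than by decomposition. Under $\sigma\lambda\le1$ the domain of dependence of $(x_{j+1/2},t)$ for $t\in(0,\Delta t]$ is contained in $[x_{j-1/2},x_{j+3/2}]$, on which the piecewise-constant data coincides with the Riemann data $(\mathbf{w}^n_j,\mathbf{w}^n_{j+1})$; by finite propagation speed the interface value equals $R(0;\mathbf{w}^n_j,\mathbf{w}^n_{j+1})$ throughout the step, so (\ref{Gdnv}) is the \emph{exact} time-averaged interface flux. Integrating the conservation law over $I_j\times[t^n,t^{n+1}]$ then identifies $\mathbf{w}^{n+1}_j$ with the exact cell average over $I_j$ of the entropy solution, which takes values in $\Sigma$; \lemref{U+} gives $\mathbf{w}^{n+1}_j\in\Sigma_0$, yielding $c_0=1$.

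For the HLL flux I would use the Rankine--Hugoniot relations across the two waves to route each interface flux through the center state: (\ref{HLL}) gives $\hat f_{j+1/2}=f(\mathbf{w}^n_j)+S_l^{j+1/2}\big(\mathbf{w}^{\mathrm{HLL}}_{j+1/2}-\mathbf{w}^n_j\big)$ and $\hat f_{j-1/2}=f(\mathbf{w}^n_j)+S_r^{j-1/2}\big(\mathbf{w}^{\mathrm{HLL}}_{j-1/2}-\mathbf{w}^n_j\big)$, where $\mathbf{w}^{\mathrm{HLL}}_{j\pm1/2}$ is precisely the average in (\ref{id}) with $(S_l,S_r)=(\sigma_l,\sigma_r)$. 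Substituting into (\ref{1stFV}) gives
\[
\mathbf{w}^{n+1}_j=\big(1+\lambda S_l^{j+1/2}-\lambda S_r^{j-1/2}\big)\mathbf{w}^n_j-\lambda S_l^{j+1/2}\mathbf{w}^{\mathrm{HLL}}_{j+1/2}+\lambda S_r^{j-1/2}\mathbf{w}^{\mathrm{HLL}}_{j-1/2}.
\]
Since $S_l^{j+1/2}\le0\le S_r^{j-1/2}$ and $|S_l^{j+1/2}|,|S_r^{j-1/2}|\le\sigma$, all three weights are nonnegative and sum to one once the middle weight satisfies $1+\lambda S_l^{j+1/2}-\lambda S_r^{j-1/2}\ge1-2\sigma\lambda\ge0$, i.e. $\sigma\lambda\le\tfrac12$; the result is then a convex combination of $\mathbf{w}^n_j\in\Sigma$ and two states in $\Sigma_0$, hence lies in $\Sigma_0$, giving $c_0=\tfrac12$. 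The Lax--Friedrichs flux (\ref{gLF}) is the special case $S_l\equiv-\sigma,\,S_r\equiv\sigma$, for which the same substitution produces $\mathbf{w}^{n+1}_j=\tfrac12\mathbf{P}+\tfrac12\mathbf{Q}$ with $\mathbf{P}=\mathbf{w}^n_j+\sigma\lambda(\mathbf{w}^n_{j+1}-\mathbf{w}^n_j)-\lambda\big(f(\mathbf{w}^n_{j+1})-f(\mathbf{w}^n_j)\big)$ and $\mathbf{Q}$ its left-sided analogue.

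The step I expect to be the main obstacle is pinning down the \emph{sharp} CFL constants, and especially reconciling the stated $c_0=1$ for Lax--Friedrichs with the value $c_0=\tfrac12$ that its parent HLL flux produces. The convex-combination route writes each of $\mathbf{P},\mathbf{Q}$ as $2\sigma\lambda\,\mathbf{w}^{\mathrm{HLL}}+(1-2\sigma\lambda)\mathbf{w}^n_j$, which is a convex combination only for $\sigma\lambda\le\tfrac12$; in the scalar case the stronger constant $c_0=1$ follows instead from monotonicity of the scheme (each flux being monotone whenever $\sigma\ge|f'|$) together with the maximum principle, and the delicate point is whether and how this can be pushed to general systems with a convex invariant region. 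Two further technical points require care: the sonic and supersonic branches of (\ref{HLL}), where the flux degenerates to $f(\mathbf{w}^n_j)$ or $f(\mathbf{w}^n_{j+1})$ and the Rankine--Hugoniot decomposition must be taken across a single wave; and checking membership in the open set $\Sigma_0$ rather than merely in $\Sigma$, which follows from \lemref{U+} once at least one strictly interior state carries positive weight.
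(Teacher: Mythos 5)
Your Godunov argument (exactness of the updated cell average under $\sigma\lambda\le 1$, followed by Lemma~\ref{U+}) and your HLL decomposition through the two interface star states are essentially the paper's proof, and both are correct, including the handling of the sonic branches via $a=\min\{\sigma_l,0\}$, $b=\max\{\sigma_r,0\}$. The genuine gap is the one you flagged yourself: treating Lax--Friedrichs as the special case $S_l\equiv-\sigma$, $S_r\equiv\sigma$ of HLL only delivers $c_0=\tfrac12$, because splitting the update into the two one-sided pieces $\mathbf{P},\mathbf{Q}$ forces you to spend weight $2\sigma\lambda$ on \emph{each} interface star state, and monotonicity cannot be invoked to upgrade the constant for systems.

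The missing idea is to bypass the two interface Riemann problems and use the single Riemann problem with data $(\mathbf{w}^n_{j-1},\mathbf{w}^n_{j+1})$, skipping the middle cell. For the Lax--Friedrichs flux the update collapses algebraically to
\begin{align*}
\mathbf{w}^{n+1}_j=(1-\lambda\sigma)\,\mathbf{w}^n_j+\lambda\sigma\,\mathbf{w}^*,\qquad
\mathbf{w}^*=\frac{\mathbf{w}^n_{j-1}+\mathbf{w}^n_{j+1}}{2}-\frac{f(\mathbf{w}^n_{j+1})-f(\mathbf{w}^n_{j-1})}{2\sigma},
\end{align*}
and by the identity (\ref{id}) with $S_l=-\sigma$, $S_r=\sigma$ applied to $R(\xi;\mathbf{w}^n_{j-1},\mathbf{w}^n_{j+1})$, the state $\mathbf{w}^*$ is the average of the exact Riemann fan over $[-\sigma,\sigma]$ and hence lies in $\Sigma_0$ by Lemma~\ref{U+}. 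Only one star state appears, it carries weight $\lambda\sigma$, and the coefficient $1-\lambda\sigma$ of $\mathbf{w}^n_j$ is nonnegative precisely under $\sigma\lambda\le 1$; this yields $c_0=1$ and closes the only gap in your argument.
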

\begin{proof}
(i) With the Godunov flux in (\ref{1stFV}),  
$\mathbf{w}^{n+1}_j$ can be viewed as the cell average of the exact Riemann solution at $t^{n+1}$ when  $\lambda \sigma \leq 1$. 
See \cite[Section 13.2]{CLLeVeque}. Since the exact solution lies in $\Sigma$, then according to Lemma \ref{U+}, we have $\mathbf{w}^{n+1}_j\in \Sigma _0$ if $\lambda \sigma\leq 1$.

When the Lax-Friedrich flux (\ref{gLF}) is used,
the update $\mathbf{w}^{n+1}_j$ in (\ref{1stFV}) can be rewritten as 
\begin{align*}
\mathbf{w}^{n+1}_j=\left(1-\lambda \sigma \right)\mathbf{w}^n_j+\lambda \sigma\mathbf{w}^*,
\end{align*}
where
\begin{align*}
{\mathbf{w}}^*= \left(\frac{\mathbf{w}^n_{j-1}+\mathbf{w}^n_{j+1}}{2}-\frac{f(\mathbf{w}^n_{j+1})-f(\mathbf{w}^n_{j-1})}{2\sigma} \right).
\end{align*}
From (\ref{id}) it follows that 
\begin{align*}
{\mathbf{w}}^*=\frac{1}{2\sigma }\int ^{\sigma }_{-\sigma}
R(\xi; \mathbf{w}^n_{j-1}, \mathbf{w}^n_{j+1})d\xi.
\end{align*}
For $\mathbf{w}^n_{j\pm1} \in \Sigma$, we have $R(\xi; \mathbf{w}^n_{j-1},\mathbf{w}^n_{j+1}) \in \Sigma$. Therefore ${\mathbf{w}}^*$ lies in $\Sigma _0$ by Lemma \ref{U+}.  Since $\mathbf{w}^{n+1}_j$ is a convex combination of two vectors: $\mathbf{w}^n_j \in \Sigma $ and ${\mathbf{w}}^* \in \Sigma_0 $ for $\lambda \sigma\leq 1$, we then have $\mathbf{w}^{n+1}_j \in \Sigma_0$.

(ii) 
For the HLL flux (\ref{HLL}),
the evolved cell average $\mathbf{w}^{n+1}_j$ can be rewritten as
\begin{align}\label{HLLw}
\mathbf{w}^{n+1}_j=(1-\theta _1-\theta _2)\mathbf{w}^n_j+\theta _1\hat{\mathbf{w}}^1+\theta _2\hat{\mathbf{w}}^2,
\end{align}
with
\begin{align*}
\hat{\mathbf{w}}^1=\frac{b_1\mathbf{w}^n_{j+1}-a_1\mathbf{w}^n_j}{b_1 -a_1}-\frac{f(\mathbf{w}^n_{j+1})-f(\mathbf{w}^n_j)}{b_1 -a_1},\\\quad \hat{\mathbf{w}}^2=\frac{b_2\mathbf{w}^n_j-a_2\mathbf{w}^n_{j-1}}{b_2-a_2}-\frac{f(\mathbf{w}^n_j)-f(\mathbf{w}^n_{j-1})}{b_2-a_2},
\end{align*}
where $\theta _1=-\lambda a_1$, $\theta _2=\lambda b_2$ with
\begin{align*}
b_1= \max\{\sigma _{j+\frac{1}{2},r},0\}, \quad a_1=\min\{\sigma _{j+\frac{1}{2},l},0\},\\
b_2= \max\{\sigma _{j-\frac{1}{2},r},0\}, \quad a_2=\min\{\sigma _{j-\frac{1}{2},l},0\},
\end{align*}
and $\sigma _{j+\frac{1}{2},l}$ and $\sigma _{j+\frac{1}{2},r}$ are the leftmost and rightmost wave speeds at $x_{j+\frac{1}{2}}$.
Notice that both $\hat{\mathbf{w}}^1$ and $\hat{\mathbf{w}}^2$ are in the form of (\ref{id}), the cell average of some exact Riemann solutions,
hence they both lie in $\Sigma _0$ by Lemma \ref{U+}. Therefore $\lambda \sigma \leq \frac{1}{2}$ is a sufficient condition for $\mathbf{w}^{n+1}_j$ in (\ref{HLLw}) to be in $\Sigma _0$.
\end{proof}
\begin{rem}
Notice that the local Lax-Friedrich flux is a special case of HLL flux, where
\begin{align*}
\sigma _{j+\frac{1}{2},r}=-\sigma _{j+\frac{1}{2},l}=\max _{{\mathbf{w}^n_j,\mathbf{w}^n_{j+1}}}|\partial_{\mathbf{w}} f(\cdot)|.
\end{align*}
Hence the local Lax-Friedrich flux is an IRP flux when $\lambda\sigma \leq \frac{1}{2}$. Here we use $|\partial_{\mathbf{w}} f|$ as a notation to denote the absolute value of eigenvalues of Jacobian matrix $\partial_{\mathbf{w}} f $. 
\end{rem}
\red{
The HLLC approximate Riemann solver as a three wave model was proposed by Toro et al. \cite{TSS94} as a modification of the HLL flux whereby the missing contact and shear waves in the Euler equations are restored. The HLLC flux is given by
\begin{align*}
\hat{f}(\mathbf{w}_l,\mathbf{w}_r)=
\begin{cases}
f(\mathbf{w}_l), \quad \text{if }0\leq \sigma _l,\\
f_{*l}, \quad \text{if }\sigma _l\leq 0\leq \sigma _*,\\
f_{*r}, \quad \text{if }\sigma _*\leq 0\leq \sigma _r,\\
f(\mathbf{w}_r), \quad \text{if }0\geq \sigma _r,
\end{cases}
\end{align*}
where $\sigma _{*}$ is the speed of middle wave, and the intermediate fluxes are given by
\begin{align*}
f_{*l}=f(\mathbf{w}_l)+\sigma _l(\mathbf{w}_{*l}-\mathbf{w}_l), \quad f_{*r}=f(\mathbf{w}_r)+\sigma _r(\mathbf{w}_{*r}-\mathbf{w}_r),
\end{align*}
and $\mathbf{w}_{*l}$, $\mathbf{w}_{*r}$ are two intermediate states determined by integral averages of the Riemann solution 
$$
\mathbf{w}_{*l}=\frac{1}{\sigma_*-\sigma_l}\int_{\sigma_l}^{\sigma_*}\mathbf{w}(\xi t, t)d\xi, \quad \mathbf{w}_{*r}=\frac{1}{\sigma_r- \sigma_*}\int_{\sigma_*}^{\sigma_r}\mathbf{w}(\xi t, t)d\xi.
$$
The two intermediate fluxes are related by 
\begin{align}\label{flfr}
f_{*r}=f_{*l}+\sigma _*(\mathbf{w}_{*r}-\mathbf{w}_{*l}).
\end{align}
Note that there are more unknowns than equations and some extra conditions need to be imposed in order to determine the intermediate fluxes, see \cite{TSS94}
for two versions of the HLLC flux for the compressible Euler equation.  For general 1D hyperbolic conservation law systems, the following result for the HLLC flux to be an IRP flux is proved in Appendix B. 
\begin{lem}\label{lemHLLC}
For $c_0=\frac{1}{2}$, the HLLC flux is an IRP flux.
\end{lem}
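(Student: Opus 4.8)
The plan is to adapt the argument used for the HLL flux in \lemref{lem:IRPflux}: I will express the updated average $\mathbf{w}^{n+1}_j$ as a convex combination of $\mathbf{w}^n_j\in\Sigma$ and intermediate states that are integral averages of the exact Riemann solutions at the two interfaces, so that \lemref{U+} forces those intermediate states into $\Sigma_0$; convexity of $U$ then yields $\mathbf{w}^{n+1}_j\in\Sigma_0$. The bound $\lambda\sigma\le\frac12$ is exactly what makes the combination convex.

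The first step is to observe that the HLLC approximate Riemann solution is conservative, so that a Godunov-type interpretation of (\ref{1stFV}) is available. Indeed, $f_{*l}=f(\mathbf{w}_l)+\sigma_l(\mathbf{w}_{*l}-\mathbf{w}_l)$ and $f_{*r}=f(\mathbf{w}_r)+\sigma_r(\mathbf{w}_{*r}-\mathbf{w}_r)$ are the Rankine--Hugoniot relations across the $\sigma_l$ and $\sigma_r$ waves, while (\ref{flfr}) is the Rankine--Hugoniot relation across the contact wave $\sigma_*$. Hence the piecewise-constant profile taking the values $\mathbf{w}_l,\mathbf{w}_{*l},\mathbf{w}_{*r},\mathbf{w}_r$ separated by the three waves is a weak solution, and the finite-volume update (\ref{1stFV}) with the HLLC flux returns precisely the average over $I_j$ of this profile after one step. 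Since every interface wave speed is bounded by $\sigma$, under $\lambda\sigma\le\frac12$ each fan advances at most $\sigma\Delta t\le\frac12\Delta x$ into the cell, so the fans from $x_{j-1/2}$ and $x_{j+1/2}$ do not overlap and the average over $I_j$ decouples into a left-interface part, a central part still equal to $\mathbf{w}^n_j$, and a right-interface part.

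Integrating the profile over $I_j$ then exhibits $\mathbf{w}^{n+1}_j$ as $\mathbf{w}^n_j$ carrying a nonnegative weight, plus nonnegative multiples (each proportional to $\lambda$ times the length of the relevant wave interval swept into $I_j$) of the intermediate states $\mathbf{w}_{*l},\mathbf{w}_{*r}$ from each interface; the weight on $\mathbf{w}^n_j$ is nonnegative precisely by the CFL bound. The decisive point is that, by their defining formulas, $\mathbf{w}_{*l}$ and $\mathbf{w}_{*r}$ are integral averages of the \emph{exact} Riemann solution over the subintervals $[\sigma_l,\sigma_*]$ and $[\sigma_*,\sigma_r]$ of the fan; as $\mathbf{w}_l,\mathbf{w}_r\in\Sigma$ keep that exact solution inside $\Sigma$ throughout, \lemref{U+} places both intermediate states in $\Sigma_0$. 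Applying Jensen's inequality for the convex $U$ to this convex combination, in which at least one strictly interior state carries positive weight, gives $U(\mathbf{w}^{n+1}_j)<0$, i.e. $\mathbf{w}^{n+1}_j\in\Sigma_0$.

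The main obstacle is the bookkeeping forced by the four-case definition of the HLLC flux: which of $\mathbf{w}_{*l},\mathbf{w}_{*r}$ (and, in the supersonic configurations, possibly a neighbor value $\mathbf{w}^n_{j\pm1}\in\Sigma$) actually enters $I_j$, and with what weight, depends on the signs of $\sigma_l,\sigma_*,\sigma_r$ at each interface. The work is to run through these subcases and check in each that all weights are nonnegative, sum to one, and leave a nonnegative coefficient on $\mathbf{w}^n_j$ under $\lambda\sigma\le\frac12$. A minor loose end is the fully degenerate configuration in which every wave at both interfaces is swept clear of $I_j$, so that $\mathbf{w}^{n+1}_j=\mathbf{w}^n_j$ and no interior state contributes; this purely upwind situation carries the same non-triviality caveat already implicit in the Godunov and HLL parts of \lemref{lem:IRPflux}, and is covered by the non-triviality hypothesis of \lemref{U+}.
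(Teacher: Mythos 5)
Your proposal is correct and rests on the same two ingredients as the paper's Appendix B proof --- the intermediate states are integral averages of exact Riemann solutions, hence lie in $\Sigma_0$ by \lemref{U+}, and $\lambda\sigma\le\frac12$ keeps the two interface fans from interacting --- but it packages them differently. The paper proceeds by brute force: twelve sign configurations of $(\sigma_l,\sigma_*,\sigma_r)$ at the two interfaces, each with an explicitly written convex combination, and in the supersonic subcases it introduces auxiliary HLL-type states $\hat{\mathbf{w}}$ (averages of the exact Riemann solution over $[0,\sigma_r]$ or $[\sigma_l,\sigma_r]$ via the identity (\ref{id})) rather than the neighbor values $\mathbf{w}^n_{j\pm1}$ that appear in your profile average, so the two decompositions do not literally coincide. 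Your Godunov-type averaging derivation produces all the nonnegative weights at once, as lengths of subintervals of $I_j$, which is cleaner and explains why the case check must succeed; the price is that you must justify that the flux-form update equals the cell average of the piecewise-constant HLLC profile. On that point your phrase ``is a weak solution'' is slightly off: the profile is not a weak solution of (\ref{1dcl}), since $f_{*l}$ and $f_{*r}$ need not equal $f(\mathbf{w}_{*l})$ and $f(\mathbf{w}_{*r})$; what you actually need, and what holds by summing the three jump relations including (\ref{flfr}), is the HLL-type integral consistency condition, equivalently that the Godunov-type flux $f(\mathbf{w}_r)+\frac{1}{\Delta t}\int_0^{S_r\Delta t}\bigl(\tilde R-\mathbf{w}_r\bigr)\,dx$ reproduces the four-branch HLLC formula. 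With that one correction the argument is complete, and your caveat about the fully upwind degenerate configuration (where $\mathbf{w}^{n+1}_j=\mathbf{w}^n_j$ lands only in $\Sigma$, not $\Sigma_0$) is no worse than the same implicit issue in the paper's own Cases 9--12.
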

}
\red{
\begin{rem} We want to point out that Algorithm \ref{Algo} can still be applied to weakly hyperbolic conservation laws.  A canonical example is the pressure-less Euler system,  due to the formation of vacuum and/or delta-shock formation in the density,  care is needed in the choice of the numerical flux. The Godunov flux derived in \cite{BSL} was used in the DG scheme given in \cite{YWS}. From \cite[Lemma 4.2]{YWS}
we see that the Godunov flux  is indeed an IRP flux for $c_0=\frac{1}{2}$ with $\sigma=\max\{|s_0|, |r_0|\}$ to preserve $\Sigma$ as defined in (\ref{pl}). Our explicit IRP limiter can of course be used as an alternative to the special limiter constructed in \cite{YWS} in order to fulfill the two requirements: $\rho$ is positive and the velocity  $u=m/\rho$ satisfies a maximum principle.
\end{rem}
}

For a $(k+1)$th-order scheme with reconstructed polynomials or approximation polynomials of degree $k$, with forward Euler time discretization, the cell average evolves by
\begin{align}\label{eq: High order FV scheme}
\bar{\mathbf{w}}^{n+1}_j=\bar{\mathbf{w}}^n_j-\lambda[\hat{f}(\mathbf{w}^-_{j+\frac{1}{2}},\mathbf{w}^+_{j+\frac{1}{2}})-\hat{f}(\mathbf{w}^-_{j-\frac{1}{2}},\mathbf{w}^+_{j-\frac{1}{2}})],
\end{align}
where $\bar{\mathbf{w}}^n_j$ is the cell average of $ \mathbf{w}^{n}_h$ on $I_j$ at time level $n$, $\mathbf{w}^{\pm }_{j+\frac{1}{2}}$ are approximations to the point value of $\mathbf{w}$ at $x_{j+1/2}$ at time level $n$ from the left and the right cells,  respectively.

We consider an $N-$point 
Legendre Gauss-Lobatto quadrature rule on $I_j$,  with quadrature weights $\hat \omega_i$ on $[-\frac{1}{2},\frac{1}{2}]$ such that  $\sum_{i=1}^N \hat \omega_i =1$, which is exact for integrals of polynomials of degree up to $k$, if $2N-3 \geq k$. 
Denote these quadrature points  on $I_j$ as 
$$
S_j:=\{\hat x^i_j, 1\leq i \leq N\},  
$$
where $\hat x^1_j=x_{j-1/2}$ and $\hat x^N_j=x_{j+1/2}$.  The cell average decomposition then takes the form 
\begin{align}\label{avgdec-}
\bar{\mathbf{w}}^n_j=\sum_{i=2}^{N-1} \hat\omega_i \mathbf{w}_h^n(\hat x_j^i) +\hat \omega_1
\mathbf{w}^{+}_{j-\frac{1}{2}} +\hat \omega_N \mathbf{w}^{-}_{j+\frac{1}{2}},
\end{align}
where it is known that $\hat \omega_1=\hat \omega_N=1/(N(N-1))$.  Hence  (\ref{eq: High order FV scheme}) can be rewritten as a linear convex combination of the form 
\begin{align}\label{decschm}
\bar{\mathbf{w}}^{n+1}_j=\sum ^{N-1}_{i=2}\hat{\omega}_{i}\mathbf{w}_h^n(\hat{x}^i_j)+\hat{\omega}_1H_1 +\hat{\omega}_NH_N,
\end{align}
where 
\begin{align*}
H_1=\mathbf{w}^+_{j-\frac{1}{2}}-\frac{\lambda}{\hat{\omega}_1}\left(\hat{f}(\mathbf{w}^+_{j-\frac{1}{2}},\mathbf{w}^-_{j+\frac{1}{2}})-\hat{f}(\mathbf{w}^-_{j-\frac{1}{2}},\mathbf{w}^+_{j-\frac{1}{2}}) \right),\\
H_N=\mathbf{w}^-_{j+\frac{1}{2}}-\frac{\lambda}{\hat{\omega}_N}\left(\hat{f}
(\mathbf{w}^-_{j+\frac{1}{2}},\mathbf{w}^+_{j+\frac{1}{2}})- \hat{f}(\mathbf{w}^+_{j-\frac{1}{2}},\mathbf{w}^-_{j+\frac{1}{2}})\right)
\end{align*}
are of the same type as the first order scheme (\ref{1stFV}).  The decomposition of (\ref{decschm}) is first introduced by Zhang and Shu (\cite{ZS10b}) for the compressible Euler equation and it suffices for us to conclude the following result. 
\begin{thm}[High order scheme]\label{thm:highFV} 
A sufficient condition for $ \bar{\mathbf{w}}^{n+1}_j\in \Sigma _0$ by scheme (\ref{eq: High order FV scheme}) with an IRP flux is 
$$
\mathbf{w}^n_h(x)\in \Sigma \quad \text{for} \; x\in  S_j
$$
under the CFL condition 
\begin{align}\label{pscfl}
\sigma\lambda \leq \frac{1}{N(N-1)}c _0 \quad \text{with} \quad N=\lceil \frac{k+3}{2}\rceil,
\end{align}
where $\sigma$ is the {global maximum of wave speed}, $c_0$ is dependent on the IRP flux and $k$ is the degree of approximation polynomials.
\end{thm}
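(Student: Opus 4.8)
The plan is to read the conclusion directly off the convex-combination identity (\ref{decschm}) that has already been set up, treating the two boundary terms $H_1$ and $H_N$ as first-order IRP updates evaluated at a rescaled mesh ratio, and then closing with the convexity of $U$.

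First I would fix the number of quadrature points. The cell-average decomposition (\ref{avgdec-}) requires the $N$-point Gauss--Lobatto rule to integrate polynomials of degree $k$ exactly, so I need $2N-3\ge k$; the smallest such integer is $N=\lceil (k+3)/2\rceil$, which is precisely the value appearing in (\ref{pscfl}). This choice also guarantees that all weights $\hat\omega_i$ are strictly positive and that the endpoint weights satisfy $\hat\omega_1=\hat\omega_N=1/(N(N-1))$, which is the single factor that will convert the first-order threshold into the high-order bound.

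Next I would identify $H_1$ and $H_N$ in (\ref{decschm}) as first-order finite-volume updates of the form (\ref{1stFV}). Reading off $H_1$, its central state is $\mathbf{w}^+_{j-1/2}$, its left and right neighbour states are $\mathbf{w}^-_{j-1/2}$ and $\mathbf{w}^-_{j+1/2}$, and the effective mesh ratio is $\lambda/\hat\omega_1=\lambda N(N-1)$; $H_N$ is analogous. By the hypothesis that $\mathbf{w}^n_h(x)\in\Sigma$ on the test set $S_j$ for every cell—note that the one-sided interface values $\mathbf{w}^\pm_{j\mp 1/2}$ are values of $\mathbf{w}^n_h$ at endpoints of $S_j$ or of the adjacent cell's test set—all three input states of each of $H_1,H_N$ lie in $\Sigma$. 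The CFL condition (\ref{pscfl}) reads exactly $\sigma\lambda N(N-1)\le c_0$, i.e. the rescaled mesh ratio $\lambda/\hat\omega_1$ meets the threshold $c_0$ of the chosen IRP flux. Hence, by the IRP flux property established in \lemref{lem:IRPflux} together with the definition of an IRP flux, both $H_1\in\Sigma_0$ and $H_N\in\Sigma_0$.

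Finally I would combine these facts through the convexity of $U$. Since $\bar{\mathbf{w}}^{n+1}_j$ in (\ref{decschm}) is a convex combination, with strictly positive weights summing to one, of the interior values $\mathbf{w}^n_h(\hat x^i_j)\in\Sigma$ and the two boundary values $H_1,H_N\in\Sigma_0$, Jensen's inequality gives
\[
U(\bar{\mathbf{w}}^{n+1}_j)\le \sum_{i=2}^{N-1}\hat\omega_i\, U\!\left(\mathbf{w}^n_h(\hat x^i_j)\right)+\hat\omega_1 U(H_1)+\hat\omega_N U(H_N).
\]
The interior terms are $\le 0$ while the two boundary terms are strictly negative and carry positive weight $\hat\omega_1=\hat\omega_N>0$, so the right-hand side is strictly negative and $\bar{\mathbf{w}}^{n+1}_j\in\Sigma_0$, as claimed.

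I expect the main obstacle to be the bookkeeping of the effective CFL number rather than any conceptual difficulty: one must verify that the scaling $\lambda/\hat\omega_1$ genuinely appears in $H_1,H_N$, so that the factor $1/(N(N-1))$ in (\ref{pscfl}) is exactly what is needed to push the rescaled mesh ratio below the first-order threshold $c_0$, and that the boundary quadrature weights are the ones coupled to the flux differences. The remaining ingredients—positivity of the Gauss--Lobatto weights, membership of the interface values in the appropriate test sets, and the concluding convexity estimate—are routine once the decomposition (\ref{decschm}) is in place.
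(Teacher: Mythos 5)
Your proposal is correct and follows essentially the same route as the paper: it uses the Gauss--Lobatto cell-average decomposition (\ref{avgdec-}) to rewrite (\ref{eq: High order FV scheme}) as the convex combination (\ref{decschm}), recognizes $H_1$ and $H_N$ as first-order updates of type (\ref{1stFV}) with the rescaled mesh ratio $\lambda/\hat\omega_1=\lambda N(N-1)$ so that (\ref{pscfl}) is exactly the first-order IRP threshold, and closes with convexity. The paper leaves most of these details implicit (citing the Zhang--Shu decomposition and stating that it ``suffices to conclude''), so your write-up is simply a fuller version of the intended argument, with the bookkeeping of the endpoint weights $\hat\omega_1=\hat\omega_N=1/(N(N-1))$ and the final strict-interior step carried out correctly.
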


\subsection{Multi-dimensional case}
To solve (\ref{mcl}) over a computational  cell $K$, we consider a high order DG scheme. That is, to find $\mathbf{w}_h\in V_h$ such that  
\begin{align}\label{dg}
\int _{K}\partial_t (\mathbf{w}_h)\phi dx-\int _{K}\mathbf{F}(\mathbf{w}_h)\cdot \nabla_x \phi dx+ \sum ^Q_{i=1}\int _{e^i_K} \hat{F}(\mathbf{w}_h^-,\mathbf{w}_h^+, \nu ^i)\phi ds =0, \quad \forall \phi \in V_h,
\end{align}
where {$\mathbf{F}=(F_1,\cdots, F_d)^\top$}, $e^i_K$ is the $i$-th edge (or surface) of $K$, $\nu ^i$ is the normal vector on $e^i_K$, $\mathbf{w}^-_h$ and $\mathbf{w}^+_h$ denote the approximation to $\mathbf{w}_h$ on the edge of $K$ from interior and exterior of $K$ respectively, and $\hat{{F}}$ is an admissible numerical flux. 

Taking $\phi=1/|K|$ in this scheme, where $|K|$ is the area (or volume) of the element, and evaluate the interface integral with an appropriate quadrature rule, we see that cell averages are actually evolved, when using the Euler-forward for time discretization, by   
\begin{align}\label{cdgK2}
\bar{\mathbf{w}}^{n+1}_K=\bar{\mathbf{w}}^n_K-\frac{\Delta t}{|K|}\sum ^Q_{i=1}\sum ^L_{\beta =1}w_{\beta}\hat{F}(\mathbf{w}^{i,\beta}_{K},\mathbf{w}^{i,\beta}_{K_i},\nu ^i)|e_K^i|,
\end{align}
where 
$$
\bar{\mathbf{w}}^n_K=\frac{1}{|K|}\int _K\mathbf{w}^n_hdK,
$$
$\mathbf{w}^{i,\beta}_{K}$ and $\mathbf{w}^{i,\beta}_{K_i}$ are approximations at $\beta$-th quadrature point to solution values on $e^i_K$ from cell $K$ and $K_i$ respectively, $w_{\beta}$ are corresponding quadrature weights, and the number of quadrature points $L$ is chosen to achieve the desired accuracy.


Following \cite{PS96, ZS10b, ZXS12} in the study of positivity-preserving schemes, we seek to rewrite the evolved cell average in (\ref{cdgK2}) into a linear convex combination of terms which can then be shown to lie strictly within $\Sigma$. 

Assume one can construct an exact decomposition of the cell average:
\begin{align}\label{avgdec}
\bar{\mathbf{w}}^n_K=\sum ^P_{\alpha =1}c_{\alpha}\mathbf{w}^{\alpha}_K+\sum ^Q_{i=1}\sum ^L_{\beta =1}d_{\beta}\mathbf{w}^{i,\beta}_{K},
\end{align}
where $\mathbf{w}^{\alpha}_K$ are approximations to solution values at some interior points in $K$,  $c_{\alpha}$ and $d_{\beta}$ are positive weights satisfying 
\begin{align}\label{qwts}
\sum ^P_{\alpha =1}c_{\alpha}+Q\sum ^L_{\beta =1}d_{\beta}= 1.
\end{align}
This allows for the following reformulation 
\begin{align}\label{avgscm}
\nonumber \bar{\mathbf{w}}^{n+1}_K=&\sum ^P_{\alpha =1}c_{\alpha}\mathbf{w}^{\alpha}_K+\sum ^Q_{i=1}\sum ^L_{\beta =1}d_{\beta}\mathbf{w}^{i,\beta}_{K}-\frac{\Delta t}{|K|}\sum ^Q_{i=1}\sum ^L_{\beta =1}w_{\beta}\hat{F}(\mathbf{w}^{i,\beta}_{K},\mathbf{w}^{i,\beta}_{K_i},\nu ^i)|e^i_K|\\
=&\sum ^P_{\alpha =1}c_{\alpha}\mathbf{w}^{\alpha}_K+\sum ^L_{\beta =1}d_{\beta}\sum ^Q_{i=1}H_{i,\beta},
\end{align}
where
\begin{align*}
H_{i,\beta}=&\mathbf{w}^{i,\beta}_K -\frac{\Delta t |e^i_K| w_{\beta}}{|K|d_{\beta}}\left( \hat{F}(\mathbf{w}^{i,\beta}_K,\mathbf{w}^{i,\beta}_{K_i},\nu ^i)-\hat{F}(\mathbf{w}^{Q,\beta}_K,\mathbf{w}^{i,\beta}_{K},\nu ^i) \right), \quad i=1,\cdots,Q-1, \\
H_{Q,\beta}=&\mathbf{w}^{Q,\beta}_K-\frac{\Delta t w_{\beta}}{|K|d_{\beta}}\left(\hat{F}(\mathbf{w}^{Q,\beta}_K,\mathbf{w}^{Q,\beta}_{K_Q},\nu^Q)|e^Q_K| +\sum ^{Q-1}_{i=1}\hat{F}(\mathbf{w}^{Q,\beta}_K,\mathbf{w}^{i,\beta}_K,\nu ^i)|e^i_K| \right).
\end{align*}
Furthermore, 
{$H_{Q,\beta}$ can be rewritten as} 
\begin{align}\label{HQ}
\nonumber H_{Q,\beta}=&\sum ^{Q-1}_{i=1}\frac{|e^i_K|}{|\partial K|}\left(\mathbf{w}^{Q,\beta}_K-\frac{\Delta t w_{\beta}}{|K|d_{\beta}}|\partial K| \left(\hat{F}(\mathbf{w}^{Q,\beta}_K,\mathbf{w}^{i,\beta}_{K},\nu ^i)-\hat{F}(\mathbf{w}^{Q,\beta}_K,\mathbf{w}^{Q,\beta}_K,\nu ^i) \right) \right)\\
&+\frac{|e^Q_K|}{|\partial K| }
\left(\mathbf{w}^{Q,\beta}_K-\frac{\Delta t w_{\beta}}{|K|d_{\beta}} |\partial K| 
\left(\hat{F}(\mathbf{w}^{Q,\beta}_K,\mathbf{w}^{Q,\beta}_{K_Q},\nu ^Q)-\hat{F}(\mathbf{w}^{Q,\beta}_K,\mathbf{w}^{Q,\beta}_K,\nu ^Q) \right)
 \right),
\end{align}
where $|\partial K|=\sum ^Q_{i=1}|e^i_K|$. Here we have used the identity 
$$
\sum_{i=1}^Q |e^i_K| \hat{F}(\mathbf{w}^{Q,\beta}_K,\mathbf{w}^{Q,\beta}_K,\nu ^i) =\int_{\partial K} \mathbf{F}(\mathbf{w}^{Q,\beta}_K)\cdot \nu dS=
\int_{K} {\rm div}(\mathbf{F}(\mathbf{w}^{Q,\beta}_K))dx=0,
$$
where $\nu|_{e^i_K}=\nu^i$.  At this point it is clear that (\ref{avgscm}) is a linear convex combination of interior point values $\mathbf{w}^{\alpha}_K$, and quantities of the form
\begin{align}\label{fmw}
\mathbf{w}^*-c\left(\hat{F}(\mathbf{w}^*,\mathbf{w}_l,\nu)-\hat{F}(\mathbf{w}_r,\mathbf{w}^*,\nu)\right),
\end{align}
where $c$ is a constant and $\nu$ is a unit vector. Note that (\ref{fmw}) can be viewed as obtained from a formal first order scheme to one-dimensional system 
\begin{align}\label{eq:projeq}
\partial_t \mathbf{w}+\partial_\eta  (\mathbf{F}\cdot \nu)=0.
\end{align}
\red{Here $\mathbf{F}\cdot \nu:=\sum_{i=1}^d \mathbf{F_i}\cdot \nu_i$ is a vector flux.} Therefore, the cell average $\bar{\mathbf{w}}^{n+1}_K$ in (\ref{avgscm}) can be shown located in the invariant region under some CFL conditions, as long as we can show the system (\ref{eq:projeq}), also known as projected equations in \cite{Colella}, admits the same invariant region $\Sigma $ for all vectors $\nu \in \{\nu^i\}_{i=1}^Q$.  We thus have proved the following result. 
\begin{thm}\label{cfl2d}
Suppose there exists a positive quadrature rule such that (\ref{avgdec}) holds. If for $\nu \in \{\nu^i\}_{i=1}^Q$, (\ref{eq:projeq}) admits the same invariant region $\Sigma$, then a sufficient condition for $\bar{\mathbf{w}}^{n+1}_K\in \Sigma _0$ by scheme (\ref{cdgK2}) with an IRP flux is 
\begin{align*}
\mathbf{w}^n_h(x)\in \Sigma \quad \text{for }x \in S_K,
\end{align*}
under the CFL condition
\begin{align*}
\sigma\frac{\Delta t}{|K|} \leq \min _{\beta} \frac{d_{\beta}}{|\partial K|w_{\beta}}c_0, 
\end{align*}
where $\sigma =\max \limits_i\sigma _i$, $\sigma _i=\max |\partial_w( F\cdot \nu^i)|$, 
$S_K$ is the set of quadrature points over $K$, $d_{\beta}$ are positive weights such that (\ref{avgdec}) is held and $w_{\beta}$ are quadrature weights used in (\ref{cdgK2}), and $c_0=1$ or $1/2$ depending on the IRP flux used.
\end{thm}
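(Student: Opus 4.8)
The plan is to exploit the convex-combination structure that has already been assembled in (\ref{avgscm}) together with the regrouping (\ref{HQ}). By (\ref{avgdec})--(\ref{qwts}) the weights $c_\alpha$ and $d_\beta$ are strictly positive and satisfy $\sum_\alpha c_\alpha + Q\sum_\beta d_\beta = 1$, so
\[
\bar{\mathbf{w}}^{n+1}_K=\sum_{\alpha=1}^P c_\alpha \mathbf{w}^\alpha_K + \sum_{\beta=1}^L d_\beta \sum_{i=1}^Q H_{i,\beta}
\]
is a genuine convex combination. The whole argument then reduces to showing that every constituent vector lies in $\Sigma$ and that the decisive ones lie in the open set $\Sigma_0$; since $U$ is convex, a convex combination with positive weights of points in $\Sigma$, at least one of which lies in $\Sigma_0$, again lies in $\Sigma_0$, because $U\big(\sum_i \theta_i v_i\big)\le \sum_i \theta_i U(v_i)<0$ once one term is strictly negative with positive weight.

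First I would dispatch the interior contributions: the points $\mathbf{w}^\alpha_K$ are values of $\mathbf{w}^n_h$ at members of the test set $S_K$, hence lie in $\Sigma$ by the standing hypothesis $\mathbf{w}^n_h(x)\in\Sigma$ for $x\in S_K$. The crucial observation is that every $H_{i,\beta}$ --- and, after the regrouping (\ref{HQ}), every sub-block making up $H_{Q,\beta}$ --- has the form (\ref{fmw}),
\[
\mathbf{w}^*-c\left(\hat{F}(\mathbf{w}^*,\mathbf{w}_l,\nu)-\hat{F}(\mathbf{w}_r,\mathbf{w}^*,\nu)\right),
\]
which is exactly one forward-Euler step of the first order scheme (\ref{1stFV}) applied to the projected one dimensional system (\ref{eq:projeq}) in the direction $\nu=\nu^i$. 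Its three input states are again values on $S_K$ and thus lie in $\Sigma$. By hypothesis (\ref{eq:projeq}) shares the invariant region $\Sigma$, so the IRP-flux property from \lemref{lem:IRPflux} applies to $\hat{F}(\cdot,\cdot,\nu^i)$, and each such update lands in $\Sigma_0$ provided its own local CFL number does not exceed $c_0$.

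It remains to translate ``local CFL number $\le c_0$'' into the stated bound. Reading off the coefficient $c$ in each block, the terms $H_{i,\beta}$ with $i<Q$ carry $c=\frac{\Delta t\,|e^i_K|\,w_\beta}{|K|\,d_\beta}$, while the regrouped sub-blocks of $H_{Q,\beta}$ in (\ref{HQ}) carry the larger $c=\frac{\Delta t\,w_\beta\,|\partial K|}{|K|\,d_\beta}$, since $|e^i_K|\le|\partial K|$. The IRP-flux requirement $\sigma_i\,c\le c_0$ with $\sigma_i=\max|\partial_w(F\cdot\nu^i)|$ is therefore guaranteed by the worst case $\sigma\,\frac{\Delta t}{|K|}\,\frac{w_\beta\,|\partial K|}{d_\beta}\le c_0$, where $\sigma=\max_i\sigma_i$; taking the minimum over $\beta$ yields precisely $\sigma\frac{\Delta t}{|K|}\le \min_\beta \frac{d_\beta}{|\partial K| w_\beta} c_0$. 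Under this condition all the $H$-type blocks lie in $\Sigma_0$, and hence so does the convex combination, giving $\bar{\mathbf{w}}^{n+1}_K\in\Sigma_0$.

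I expect the main obstacle to be not the estimates themselves but the bookkeeping behind (\ref{HQ}): one must verify that $H_{Q,\beta}$, which a priori mixes fluxes through all $Q$ faces of $K$, really can be split into $Q$ blocks each of which is a bona fide single-direction first order update of the projected equation. This is where the consistency identity $\sum_{i=1}^Q |e^i_K|\hat{F}(\mathbf{w}^{Q,\beta}_K,\mathbf{w}^{Q,\beta}_K,\nu^i)=\int_K \mathrm{div}(\mathbf{F}(\mathbf{w}^{Q,\beta}_K))\,dx=0$ is indispensable: it permits inserting the ``diagonal'' fluxes $\hat{F}(\mathbf{w}^{Q,\beta}_K,\mathbf{w}^{Q,\beta}_K,\nu^i)$ for free and thereby recasting each piece in the form (\ref{fmw}). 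The shared-invariant-region hypothesis on (\ref{eq:projeq}) is the other essential ingredient, since it is what allows the one dimensional IRP-flux lemma to be invoked direction by direction.
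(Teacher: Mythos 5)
Your proposal is correct and follows essentially the same route as the paper: the paper's proof of Theorem~\ref{cfl2d} is precisely the derivation preceding it, namely the convex decomposition (\ref{avgscm}), the regrouping (\ref{HQ}) via the divergence identity, the identification of every block with the formal first order update (\ref{fmw}) of the projected system (\ref{eq:projeq}), and the application of the IRP-flux property direction by direction, with the CFL condition read off from the worst coefficient $\frac{\Delta t\, w_\beta |\partial K|}{|K| d_\beta}$. Your additional remarks on why a positive convex combination with at least one term in $\Sigma_0$ lands in $\Sigma_0$, and on the role of the consistency identity in (\ref{HQ}), are exactly the points the paper relies on implicitly.
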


\section{Application to compressible Euler equations}
In this section, we apply the obtained results to the DG schemes for solving the compressible Euler equations.
\subsection{1D case}
We briefly review the IRP limiter first introduced in \cite{Hyp16} for the one dimensional compressible Euler equations of the form (\ref{1dcl}) with
\begin{align}\label{1dEuler}
\mathbf{w}=(\rho, m, E)^\top, \quad f(\mathbf{w})=(m, \rho u^2+ p, (E+p)u)^\top, 
\end{align}
where $\rho$ is the density, $m=\rho u$ is the momentum, $E$ is the total energy, and $p$ is the pressure satisfying 
\begin{align*}
 E=\frac{1}{2}\rho u^2 +\frac{p}{\gamma-1}, \quad \gamma>1.
\end{align*}
It is known that the system (\ref{1dcl}) with (\ref{1dEuler}) has an invariant region:
\begin{align}\label{IRE}
\Sigma =\{\mathbf{w}\big |\quad \rho >0, \; p>0, \;  q\leq 0\},
\end{align}
where $q=(s_0-s)\rho$,
\begin{align*}
s={\rm log} \left( \frac{p(x)}{\rho^\gamma (x)}\right), \quad s_0=\inf_{x} {\rm log} \left( \frac{p_0(x)}{\rho_0^\gamma (x)}\right).
\end{align*}
Here $\rho_0$ and $p_0$ are obtained from the  given initial data  $\mathbf{w}_0=(\rho _0, m_0, E_0)^\top$.

 In numerical simulations, we use 
 the modified set of admissible states defined as
\begin{align*}
\Sigma ^{\epsilon}=\{ \mathbf{w}\big |\quad  \rho \geq \epsilon,  p\geq \epsilon, q\leq 0\},
\end{align*}
and its interior defined as
\begin{align*}
\Sigma ^{\epsilon}_0=\{ \mathbf{w}\big |\quad \rho > \epsilon,  p> \epsilon, q< 0\},
\end{align*}
where $\epsilon$ is a small positive number as the desired lower bound for density and pressure. 



The one dimensional limiter introduced in Section 2 can be applied such that the modified polynomial (\ref{modp}) lies entirely in $\Sigma ^{\epsilon}$ and is still a high order approximation to $\mathbf{w}(x)$, if $\theta$ is chosen as
\begin{align}\label{IRlimiter}
\theta =\min \{1,\theta _1, \theta _2, \theta _3\},
\end{align}
where 
\begin{align}\label{thetaEuler}
\theta _1=\frac{\bar{\rho}_h-\epsilon}{\bar{\rho}_h-\rho _{h,\min}}, \quad 
\theta _2=\frac{p(\bar{\mathbf{w}}_h)-\epsilon}{p(\bar{\mathbf{w}}_h)-p_{h,\min}}, \quad
\theta _3=\frac{q(\bar{\mathbf{w}}_h)}{q(\bar{\mathbf{w}}_h)-q_{h,\max}}
\end{align}
with
\begin{align}\label{ElMinMax}
\rho _{h,\min}=\min _{x\in K}\rho_h(x), \quad p_{h,\min}=\min _{x\in K}p(\mathbf{w}_h(x)), \quad q_{h,\max}=\max _{x\in K}q(\mathbf{w}_h(x)),
\end{align}
where $K$ may be chosen as $S_j (j=1, \cdots, N)$ in one dimensional case, dictated by Theorem \ref{thm:highFV}. Also in the CFL condition given in 
(\ref{pscfl}),  $\sigma =\| |u|+c\| _{\infty}$, where $c=\sqrt{\frac{\gamma p}{\rho}}$ is the sound speed.


\subsection{2D case}
Consider the two dimensional compressible Euler equations of the form 
\begin{align}\label{eq:2DEuler}
\partial_t \mathbf{w} +\triangledown \cdot \mathbf{F}=0,
\end{align}
with $\mathbf{w}=(\rho,m,n,E)^{\top}$, $\mathbf{F}(\mathbf{w})=(F_1(\mathbf{w}),F_2(\mathbf{w}))$, where
\begin{align}\label{fg2d}
&F_1(\mathbf{w})=(m,\rho u^2+p, \rho uv, (E+p)u)^{\top},\quad
F_2(\mathbf{w})=(n,\rho uv, \rho v^2+p, (E+p)v)^{\top}\\
&m=\rho u, \quad n=\rho v, \quad E=\frac{1}{2}\rho u^2+\frac{1}{2}\rho v^2+\frac{p}{\gamma -1}.
\end{align}
In order to apply Theorem \ref{cfl2d}, we first show that  (\ref{eq:projeq}) admits the same invariant region $\Sigma$.  
\red{The invariant region for weak solutions to hyperbolic conservation law systems  is in general an open problem due to lack of the well-posedness result.  As observed by Lax in \cite{La71}, Glimm's solutions for one-dimensional systems satisfy all relevant entropy conditions,  therefore, in \cite{Hoff85} necessary and sufficient conditions for a region to be invariant for (Glimm) solutions of the system of conservation laws are given. We shall verify Hoff's conditions, for which the building blocks are Riemann solutions, as needed in the present situation with $\Sigma$. 
}
\begin{lem}\label{lem:proj}
Let  $\bm{\nu}$ be any  unit vector, then system (\ref{eq:projeq}) with $\mathbf{w}=(\rho,m,n,E)^{\top}$ and $\mathbf{F}(\mathbf{w})=(F_1(\mathbf{w}),F_2(\mathbf{w}))$, where $F_1$ and $F_2$ are given in (\ref{fg2d}), has the same invariant region 
\begin{align*}
\Sigma =\{\mathbf{w} \big |\quad \rho >0, p>0, q\leq 0\}.
\end{align*}
\end{lem}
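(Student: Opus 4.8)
The plan is to exploit the rotational invariance of the two-dimensional Euler flux to recast the projected system as the one-dimensional Euler system carrying a passively advected transverse velocity, and then to verify Hoff's conditions by a wave-by-wave analysis of the Riemann solution. First I would write $\bm{\nu}=(\cos\phi,\sin\phi)$, introduce the unit tangent $\bm{\tau}=(-\sin\phi,\cos\phi)$, and the orthogonal transformation $T(\phi)=\mathrm{diag}(1,R(\phi),1)$ acting on the momentum pair $(m,n)$, where $R(\phi)$ is the planar rotation. The classical rotational-invariance identity $\cos\phi\,F_1(\mathbf{w})+\sin\phi\,F_2(\mathbf{w})=T^{-1}(\phi)F_1(T(\phi)\mathbf{w})$ then shows that (\ref{eq:projeq}), after this change of variables, is precisely the one-dimensional Euler system in the normal velocity $\hat u=\mathbf{u}\cdot\bm{\nu}$ together with the scalar conservation law $\partial_t(\rho\hat v)+\partial_\eta(\rho\hat u\hat v)=0$ for the tangential velocity $\hat v=\mathbf{u}\cdot\bm{\tau}$.

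The second step is to observe that the three quantities defining $\Sigma$ are invariant under $T(\phi)$ and, crucially, independent of $\hat v$: the density $\rho$ and the pressure $p=(\gamma-1)(E-\tfrac12\rho|\mathbf{u}|^2)$ are unchanged since $|\mathbf{u}|^2=\hat u^2+\hat v^2$ is preserved by the rotation, whence $s=\log(p/\rho^\gamma)$ and $q=(s_0-s)\rho$ are preserved as well. Consequently $\Sigma$ is governed entirely by the normal $(\rho,\hat u,p)$-dynamics, and the transverse velocity enters only as a passive scalar, constant across the genuinely nonlinear acoustic fields and jumping only across the linearly degenerate contact.

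The building blocks in Hoff's framework are the Riemann solutions, so it suffices to show that for any $\mathbf{w}_l,\mathbf{w}_r\in\Sigma$ the self-similar solution $R(\xi;\mathbf{w}_l,\mathbf{w}_r)$ stays in $\Sigma$. I would use the standard wave structure of the one-dimensional Euler Riemann problem, namely two acoustic waves separated by a contact, and establish three facts. Positivity of $\rho$ and $p$ holds across each wave family, away from vacuum, which can only be attained on the boundary $\{\rho=0,\ p=0\}$. The minimum entropy principle holds: $s$ is constant across rarefactions and increases across admissible shocks by Rankine--Hugoniot together with the Lax entropy condition, so the star-region entropies satisfy $s^*_l\ge s_l$ and $s^*_r\ge s_r$; across the contact, $\hat u$ and $p$ are continuous while $\rho$, $\hat v$ and $s$ may jump, but the two star entropies are exactly these bounded values. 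Hence $s\ge\min(s_l,s_r)\ge s_0$ throughout, which is equivalent to $q\le 0$, and $\Sigma$ is preserved.

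The main obstacle is the contact discontinuity, where the density and the passive transverse velocity both jump: one must confirm that the entropy in each star state is controlled by the adjacent far-field state rather than the opposite one, so that the global lower bound $s_0$ survives, and that the appearance of $\hat v$ inside the total energy $E$ genuinely decouples from the $(\rho,\hat u,p)$ equations so that the one-dimensional entropy analysis applies verbatim. A secondary point to handle with care is vacuum, ensuring it arises only as a boundary phenomenon consistent with the open conditions $\rho>0,\ p>0$ defining $\Sigma$.
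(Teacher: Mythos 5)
Your proposal is correct, but it takes a genuinely different route from the paper. Both proofs begin the same way, using rotational invariance to reduce the projected system to the one-dimensional Euler equations in the normal velocity $u^N=\mathbf{u}\cdot\bm{\nu}$ with a passively advected tangential velocity $u^T$, and both note that $\rho$, $p$, $s$ and $q$ do not see $u^T$. From there the paper does not touch the Riemann problem at all: it writes the reduced system in the primitive variables $(\rho,u^N,u^T,p)$, computes the eigenvalues $u^N\pm c,\,u^N,\,u^N$ and the left eigenvectors, observes that the normal to the boundary piece $\{q=0\}$ equals $\gamma l^2$ (a left eigenvector of the linearly degenerate field), and invokes the necessary and sufficient algebraic criterion of Hoff \cite[Corollary 3.3]{Hoff85} for a convex set cut out by such level sets to be invariant. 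You instead verify invariance directly on the building blocks, by a wave-by-wave analysis of the exact Riemann solution: positivity of $\rho$ and $p$ away from vacuum, $s$ constant across rarefactions and increasing across Lax shocks so that $s^*_l\ge s_l$ and $s^*_r\ge s_r$, and the contact only exchanging these already-bounded star values, giving $s\ge\min(s_l,s_r)\ge s_0$, i.e.\ $q\le 0$ — essentially Tadmor's minimum entropy principle \cite{Ta86}. The paper's argument is shorter and purely algebraic, and it dispatches the boundary condition for invariance in one line; yours is longer and must treat vacuum and the direction of entropy increase with care (you correctly flag both), but it is self-contained and delivers exactly the statement $R(\xi;\mathbf{w}_l,\mathbf{w}_r)\in\Sigma$ that the rest of the paper actually consumes in Lemma~\ref{lem:IRPflux} and identity (\ref{id}), rather than routing through the invariance theory for Glimm solutions.
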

\begin{proof}
Let $u^N=(u,v)\cdot \bm{\nu}$ and $u^T=(u,v)\cdot \bm{\nu}^{\perp}$.  System (\ref{eq:projeq}) can be rewritten as
\begin{equation}\label{eq:1dsys}
\begin{aligned}
&\partial_t \rho +\partial_\eta (\rho u^N)=0, \\
&\partial_t (\rho u^N)+\partial_\eta (\rho (u^N)^2+p)=0,\\
&\partial_t (\rho u^T)+\partial_\eta  (\rho u^Nu^T)=0,\\
&\partial_t E +\partial_\eta ((E+p)u^N)=0, 
\end{aligned}
\end{equation}
where 
\begin{align*}
 p=(\gamma -1)(E-\frac{1}{2}\rho (u^N)^2-\frac{1}{2}\rho (u^T)^2)
\end{align*}
as deduced from $p=(\gamma -1)(E-\frac{1}{2}\rho u^2-\frac{1}{2}\rho v^2)$. \red{These equations using the primitive variables: $U=(\rho, u^N, u^T, p)^\top$ may be written as 
$$
\partial_t U +A(U) \partial_\eta U=0,
$$
where 
$$
A=
\left(
\begin{array}{cccc}
 u^N &  \rho & 0  & 0 \\
0  &   u^N& 0  &1/\rho \\
 0 & 0  &   u^N& 0 \\
0  & \gamma p &  0&u^N
\end{array}
\right).
$$
Its eigenvalues are $u^N-c, u^N, u^N, u^N+c$, where the speed of sound is  $c=\sqrt{\gamma p/\rho}$. The associated left eigenvectors are 
\begin{align*}
& l^1=(0, -\rho/2c, 0, 1/(2c^2)), \quad l^2=(1, 0,  0, -1/c^2), \\
& l^3=(1, 0,  1, -1/c^2), \quad l^4=(0, \rho/(2c), 0, 1/(2c^2)).
\end{align*}
Consider $\partial \Sigma=\{\mathbf{w} \big |\quad \rho >0, p>0, q=0\}$, its normal direction is 
$$
\vec{n}=(\gamma, 0,  0, -\gamma/c^2)=\gamma l^2, 
$$
and  $\Sigma$  is convex. This meets the sufficient and necessary conditions given in \cite[Corollary 3.3]{Hoff85} for the intersection with a half space, 
therefore }
\begin{align*}
\Sigma =\{(\rho,m,n,E)^\top \big | \rho >0, p>0, \; q\leq 0\}
\end{align*}
is an invariant region for (\ref{eq:1dsys}).
\end{proof}

We next identify the test sets $S_K$ as required in Theorem \ref{cfl2d}. In fact, the existing results on test sets for positivity-preserving DG schemes 
established in \cite{ZS10b, ZXS12} can still be used for the IRP DG schemes presented in this work. For approximation polynomials of degree $k$,  we discuss two kinds of meshes in the following.
\begin{enumerate}
\item[(i)] For rectangular mesh $K=[x_{i-\frac{1}{2}},x_{i+\frac{1}{2}}] \times [y_{j-\frac{1}{2}},y_{j+\frac{1}{2}}]$, the test set $S_K$  in \cite[Theorem 3.1]{ZS10b} is 
$$
S_K=(S^x_i\times \hat{S}^y_j)\cup (\hat{S}^x_i\times S^y_j),
$$ where
\begin{align*}
S^x_i=\{x^{\beta}_i, \beta =1, \cdots, L\}, \quad S^y_j=\{y^{\beta}_j, \beta =1, \cdots, L\}
\end{align*}
are the Gauss quadrature points on $[x_{i-\frac{1}{2}},x_{i+\frac{1}{2}}]$ and $[y_{j-\frac{1}{2}},y_{j+\frac{1}{2}}]$ respectively and $L$ is chosen such that the quadrature rule is exact for single variable polynomials of degree $2k+1$,  and
\begin{align*}
\hat{S}^x_i=\{\hat{x}^{\alpha}_i, \alpha =1, \cdots, N\}, \quad \hat{S}^y_j=\{\hat{y}^{\alpha}_j, \alpha =1, \cdots, N\}
\end{align*}
are the Gauss-Lobatto quadrature points on $[x_{i-\frac{1}{2}},x_{i+\frac{1}{2}}]$ and $[y_{j-\frac{1}{2}},y_{j+\frac{1}{2}}]$ respectively and $N$ is chosen such that $2N-3\geq k$.

\red{In the form of decomposition (\ref{avgdec}) satisfying (\ref{qwts}), the cell average $\bar{\mathbf{w}}^n_K$ is then given by}
\begin{align*}
\bar{\mathbf{w}}^n_K=\sum ^{N-1}_{\alpha =2}\sum ^L_{\beta =1}\frac{1}{2}w_{\beta}\hat{w}_{\alpha}\left(\mathbf{w}_h(x^{\beta}_i,\hat{y}^{\alpha}_j)+\mathbf{w}_h(\hat{x}^{\alpha}_i,y^{\beta}_j)\right)+\sum ^4_{l=1}\sum ^L_{\beta =1}\frac{1}{2}w_{\beta}\hat{w}_1\mathbf{w}^{l,\beta}_K,
\end{align*}
where $\mathbf{w}^{l,\beta}_K$ are values of approximation polynomial $\mathbf{w}_h$ in $K$ at $\beta$-th quadrature points on the $l$-th edge of $K$ and {$\hat{w}_1$ is the weight of first Gauss-Lobatto quadrature point and is equal to $\hat{w}_N$}. Then according to Theorem \ref{cfl2d}, we find the CFL condition for IRP DG schemes on rectangular meshes is 
\begin{align}\label{rcfl1}
\sigma \frac{\Delta t}{\Delta x\Delta y}\leq \frac{\hat{w}_1c_0}{4(\Delta x+\Delta y)},
\end{align}
where $\sigma=\max\{\||u|+c\|_{\infty},\||v|+c\|_{\infty}\}$ and $c_0$ is 1 or $\frac{1}{2}$ depending on the IRP flux used.

\begin{rem} For rectangular meshes, 
one could use a simple dimension by dimension decomposition for $\bar{\mathbf{w}}^{n+1}_K$, leading to a less restricted CFL condition 
on the time step, which is 
\begin{align}\label{rcfl2}
\Delta t=\frac{\hat{w}_1c_0}{\frac{\sigma _1}{\Delta x}+\frac{\sigma _2}{\Delta y}},
\end{align}
as obtained in \cite[(3.12)]{ZS10b}.
 
\end{rem}

\item[(ii)] For triangular meshes,  the authors in  \cite[(3.3)]{ZXS12} introduced the following set of quadrature points, denoted by barycentric coordinates, as 
\begin{align*}
S _K=\bigg \{ &\left(\frac{1}{2}+v^{\beta},(\frac{1}{2}+\hat{u}^{\alpha})(\frac{1}{2}-v^{\beta}),(\frac{1}{2}-\hat{u}^{\alpha})(\frac{1}{2}-v^{\beta})\right),\\
&\left((\frac{1}{2}-\hat{u}^{\alpha})(\frac{1}{2}-v^{\beta}),\frac{1}{2}+v^{\beta},(\frac{1}{2}+\hat{u}^{\alpha})(\frac{1}{2}-v^{\beta}) \right),\\
&\left((\frac{1}{2}+\hat{u}^{\alpha})(\frac{1}{2}-v^{\beta}), (\frac{1}{2}-\hat{u}^{\alpha})(\frac{1}{2}-v^{\beta}),\frac{1}{2}+v^{\beta}\right) \bigg \},
\end{align*}
where
$\{\hat{u}^{\alpha}$, $\alpha =1,\cdots, N\}$ and $\{v^{\beta}$, $\beta=1,\cdots, k+1\}$ are the Gauss-Lobatto quadrature points and Gauss quadrature points on $[-\frac{1}{2},\frac{1}{2}]$ respectively and $N$ is chosen such that $2N-3\geq k$, where $k$ is the degree of approximation polynomials. 

We now show that this set can be used as a test set for the IRP DG schemes  in Theorem \ref{cfl2d}.  Based on this test set, a decomposition of the cell average that is of form (\ref{avgdec}) satisfying (\ref{qwts}) was given in \cite[(3.5)]{ZXS12}:
\begin{align*}
\bar{\mathbf{w}}^n_K=\sum ^P_{\alpha =1}c_{\alpha}\mathbf{w}^{\alpha}_K + \sum ^3_{i=1}\sum ^{k+1}_{\beta =1}\frac{2}{3}w_{\beta}\hat{w}_1\mathbf{w}^{i,\beta}_K,
\end{align*} 
where $P=3(N-2)(k+1)$,  $\mathbf{w}^{\alpha}_K$ are approximations to solution values at quadrature points in the interior of $K$, $c_{\alpha}$ are the corresponding weights, $\mathbf{w}^{i,\beta}$ are approximations at $\beta-$th quadrature point on $e^i_K$ from cell $K$, $w_{\beta}$ are weights of Gauss quadrature points and $\hat{w}_1$ is the weight of first Gauss-Lobatto quadrature point. Then, according to Theorem \ref{cfl2d}, we find that the CFL condition for IRP DG schemes on triangular meshes is  
\begin{align}\label{tcfl}
\sigma\frac{\Delta t}{|K|}\leq \frac{2}{3|\partial K|}\hat{w}_1c_0,
\end{align}
where $\sigma=\|\sqrt{u^2+v^2}+c\|_{\infty}$ and $c_0$ is $\frac{1}{2}$ or $1$ depending on the used IRP flux. Here we have used the fact that the eigenvalues for the Jacobian matrix $\partial_{w}(F(w)\cdot \nu)$
 are $\{\mathbf{u}\cdot \nu-c, \mathbf{u}\cdot \nu, \mathbf{u}\cdot \nu+c, \mathbf{u}\cdot \nu\}$, where $\mathbf{u}=(u,v)$,  and $\nu$ is the unit vector, see \cite{Rohde}.
\end{enumerate}

With the test sets and CFL conditions given above, the IRP limiter (\ref{IRlimiter}) with (\ref{thetaEuler}) can be applied, where now $x\in K\subset \mathbb{R}^2$.

\section{Numerical tests} 
In this section, we present numerical examples to test the performance of IRP DG schemes presented in previous sections. Unless it is stated specifically, the IRP flux used is the local Lax-Friedrichs flux. $\gamma =1.4$ is taken for all of the examples.

The semi-discrete DG scheme is a closed ODE system 
$$
\frac{d}{dt}\mathbf{W}=L(\mathbf{W}),
$$
where $\mathbf{W}$ consists of the unknown coefficients of the spatial basis, and  $L$ is the corresponding spatial operator. 

For the time discretization we use the third order SSP Runge-Kutta (RK3) method introduced in \cite{SO88}:
\begin{align}\label{RK3}
\nonumber \mathbf{W}^{(1)}=&\mathbf{W}^n+\Delta tL(\mathbf{W}^n),\\
\nonumber \mathbf{W}^{(2)}=&\frac{3}{4}\mathbf{W}^n+\frac{1}{4}\mathbf{W}^{(1)}+\frac{1}{4}\Delta tL(\mathbf{W}^{(1)}),\\
\mathbf{W}^{n+1}=&\frac{1}{3}\mathbf{W}^n+\frac{2}{3}\mathbf{W}^{(2)}+\frac{2}{3}\Delta tL(\mathbf{W}^{(2)}).
\end{align}
We apply the IRP limiter at each time stage in the RK3 method with $\epsilon =10^{-13}$ for all of the examples. Notice that (\ref{RK3}) is a linear convex combination of Euler forward method, therefore the invariant region is preserved by the full scheme if it is preserved at each time stage.  
Unless specified otherwise, in 1D problems, we use the time step $\Delta t=\frac{\Delta x}{4\sigma}$ for $P^1$-DG scheme and $\Delta t=\frac{\Delta x}{12\sigma}$ for $P^2$-DG scheme, where $\sigma$ is the global maximum wave speed. In 2D problems, we use the time step $\Delta t=\frac{1}{4\eta}$ for $P^1$-DG scheme and $\Delta t=\frac{1}{12\eta}$, where $\eta =\frac{\sigma 1}{\Delta x}+\frac{\sigma 2}{\Delta y}$,  
$\sigma _1$ and $\sigma _2$ are global maximum wave speeds along $x$ direction and $y$ direction,  respectively.\\

\noindent{\bf Example 1.}  {\sl 1D accuracy tests}\\
We test the accuracy of the IRP DG scheme solving the one-dimensional compressible Euler equations with periodic boundary conditions. 
The initial condition is 
\begin{align*}
\rho _0(x)=1+0.5\sin(2\pi x), \quad u_0(x)=1, \quad p_0(x)=1
\end{align*}
in domain $[0,1]$.  The exact solution is 
$$
\rho (x,y,t)=1+0.5\sin(2\pi(x-t)), \;u(x,t)=1, \;p(x,t)=1.
$$ 
{The final time is taken as $T=0.1$.} Listed in Table \ref{P1a} and Table \ref{P2a} are  the errors and orders of accuracy of the DG method for density with and without
 the IRP limiter, respectively. 
 For the $P^1$-DG scheme, we observe the desired order of accuracy for our IRP DG method, justifying that the IRP limiter does not destroy the accuracy for smooth solutions. We also see that  the errors with or without the limiter are comparable.
While for $P^2$-DG scheme with IRP limiter, the third order of accuracy is observed before the mesh is refined to $N=512$. The reason for the order loss with finer meshes could be that the DG polynomials in the intermediate stages in the Runge-Kutta method are already not of the desired order, then the accuracy of modified polynomials using the IRP limiter is also affected. Such phenomenon has also been observed and discussed in \cite{ZS12}, in which the SSP multi-step time discretization is used to avoid such issue. We also observe that if we halve the time step for $P^2$-DG scheme with the IRP limiter, then the desired order of accuracy can be recovered. 

In order to see that the IRP limiter is indeed turned on, we present Table \ref{volP1} (for $P^1$-DG scheme) and Table \ref{volP2} (for $P^2$-DG scheme) showing some information when the IRP limiter is called for the first time in the code. Here $N$ is the number of meshes used in spatial discretization, $j$ denotes that the limiter is applied in the $j$-th cell, $q_{\max}$ is defined in (\ref{ElMinMax}) and $\theta$ is the limiter parameter given in (\ref{IRlimiter}). For the $P^3$-DG, numerical solutions remain in the invariant region, while no IRP limiter is called. \\
 
\begin{table}[htbp]
\centering
\begin{tabular}{|c|cccc||cccc|}
\hline
$P^1$ DG&\multicolumn{4}{|c||}{Without limiter}  &\multicolumn{4}{|c|}{With IRP limiter}
\\ \hline
N &$L^{\infty}$Error &Order &$L^1$Error & Order &$L^{\infty}$Error &Order &$L^1$Error & Order\\
\hline
16   &4.97E-03 &/     &1.79E-03  &/ &7.19E-03    &/     &1.87E-03 &/ \\
32   &1.27E-03 &1.98  &4.43E-04  &2.02 &2.17E-03 &1.73  &4.69E-04 &1.99  \\
64   &3.18E-04 &1.99  &1.10E-04  &2.00 &5.64E-04 &1.94  &1.14E-04 &2.04 \\
128  &7.96E-05 &2.00  &2.76E-05  &2.00 &1.49E-04 &1.92  &2.81E-05 &2.02 \\
256  &1.99E-05 &2.00  &6.89E-06  &2.00 &3.82E-05 &1.96  &7.01E-06 &2.00 \\
512  &4.99E-06 &2.00  &1.72E-06  &2.00 &1.01E-05 &1.91  &1.75E-06 &2.00 \\
1024  &1.25E-06 &2.00 &4.30E-07  &2.00 &2.67E-06 &1.93  &4.35E-07 &2.01 \\
\hline
\end{tabular}
\vspace{5pt}
\caption{Accuracy of density function in Example 1 using $P^1$-DG scheme.}
\label{P1a}
\end{table}

\begin{table}[htbp]
\centering
\begin{tabular}{|c|cccc||cccc|}
\hline
$P^2$ DG&\multicolumn{4}{|c||}{Without limiter}  &\multicolumn{4}{|c|}{With IRP limiter}
\\ \hline
N &$L^{\infty}$Error &Order &$L^1$Error & Order &$L^{\infty}$Error &Order &$L^1$Error & Order\\
\hline
16   &3.20E-04 &/     &1.34E-04 &/     &3.35E-04 &/     &1.46E-04 &/ \\
32   &4.35E-05 &2.88  &1.78E-05 &2.92  &4.35E-05 &2.94  &1.80E-05 &3.01  \\
64   &5.54E-06 &2.97  &2.25E-06 &2.98  &5.54E-06 &2.98  &2.28E-06 &2.98 \\
128  &6.94E-07 &3.00  &2.83E-07 &2.99  &6.94E-07 &3.00  &2.87E-07 &2.99 \\
256  &8.68E-08 &3.00  &3.54E-08 &3.00  &8.68E-08 &3.00  &3.62E-08 &2.99 \\
512  &1.08E-08 &3.00  &4.42E-09 &3.00  &2.02E-08 &2.10  &4.54E-09 &2.99 \\
1024 &1.35E-09 &3.00  &5.53E-10 &3.00  &5.71E-09 &1.83  &5.72E-10 &2.99 \\
\hline
\end{tabular}
\vspace{5pt}
\caption{Accuracy test for density function in Example 1 using $P^2$-DG scheme.}
\label{P2a}
\end{table}

\begin{table}[htbp]
\centering
\begin{tabular}{|c|c|c|c|c|}
\hline
N &j &$q_{\max}$ &$q(\mathbf{\bar{w}}_j)$ &$\theta$ \\
\hline
16 &4 &4.48E-03 &-2.29E-02 &0.8360\\
32 &8 &1.02E-03 &-5.75E-03 &0.8491 \\
64 &16 &2.49E-04 &-1.44E-03 &0.8525 \\
128 &32 &6.18E-05 &-3.60E-04 &0.8534 \\
256 &64 &1.54E-05 &-9.00E-05 &0.8536 \\
512 &128 &3.86E-06 &-2.25E-05 &0.8537 \\
1024 &256 &9.64E-07 &-5.62E-06 &0.8537 \\
\hline
\end{tabular}
\vspace{5pt}
\caption{Some parameter values when the limiter is first called in the $P^1$-DG scheme solving Example 1.}
\label{volP1}
\end{table}

\begin{table}[htbp]
\centering
\begin{tabular}{|c|c|c|c|c|}
\hline
N &j &$q_{\max}$ &$q(\mathbf{\bar{w}}_j)$ &$\theta$ \\
\hline
16 &5 &6.25E-05 &-1.61E-02 &0.996141\\
32 &9 &3.91E-06 &-4.07E-03 &0.999040\\
64 &17 &2.44E-07 &-1.02E-03 &0.999760 \\
128 &33 &1.53E-08 &-2.55E-04 &0.999940 \\
256 &65 &9.54E-10 &-6.37E-05 &0.999985 \\
512 &129 &5.96E-11 &-1.59E-05 &0.999996 \\
1024 &257 &3.73E-12 &-3.98E-06 &0.999999 \\
\hline
\end{tabular}
\vspace{5pt}
\caption{Some parameter values when the limiter is first called in the $P^2$-DG scheme solving Example 1.}
\label{volP2}
\end{table}


%
\noindent{\bf Example 2.} {\sl 2D accuracy tests}\\
We test the accuracy of the IRP DG scheme on a low density problem in two dimensional case. The initial condition is 
\begin{align*}
\rho _0(x,y)=1+0.99\sin(x+y), \quad u_0(x,y)=1, \quad v_0(x,y)=1, \quad p_0(x,y)=1
\end{align*}
in domain $[0,2\pi]$ with periodic boundary conditions. The exact solution is 
$$
\rho (x,y,t)=1+0.99\sin(x+y-2t), u(x,y,t)=1, v(x,y,t)=1, p(x,y,t)=1.
$$
{The final time is $T=0.1$.} From Table \ref{P1b} and Table \ref{P2b} we can see close to $k+1$-th order of accuracy of the IRP DG scheme with polynomials of degree $k$. 
Listed In Table \ref{vol2DP1} (for $P^1$-DG scheme) and Table \ref{vol2DP2} (for $P^2$-DG scheme) are  values of $\theta$ and related indicators when the IRP limiter is called  in the code for the first time. $(i,j)$ denotes that the limiter is used in the $(i,j)$-th cell.\\

\begin{table}[htbp]
\centering
\begin{tabular}{|c|cccc||cccc|}
\hline
$P^1$ DG&\multicolumn{4}{|c||}{Without limiter}  &\multicolumn{4}{|c|}{With IRP limiter}
\\ \hline
$N\times N$ &$L^{\infty}$Error &Order &$L^1$Error & Order &$L^{\infty}$Error &Order &$L^1$Error & Order\\
\hline
$32\times 32$    &8.96E-03  &/    &2.12E-03   &/     &1.25E-02   &/    &2.45E-03  &/ \\
$64\times 64$    &2.18E-03  &2.04 &5.09E-04   &2.06  &2.88E-03   &2.11 &5.60E-04  &2.13 \\
$128\times 128$   &4.96E-04  &2.14 &1.24E-04   &2.04  &7.43E-04   &1.95 &1.32E-04  &2.09 \\
$256\times 256$   &1.23E-04  &2.02 &3.05E-05   &2.02  &1.86E-04   &1.99 &3.17E-05  &2.06 \\
$512\times 512$   &3.07E-05  &2.00 &7.58E-06   &2.01  &4.86E-05   &1.94 &7.76E-06  &2.03 \\
\hline
\end{tabular}
\vspace{5pt}
\caption{Accuracy test for density function in Example 2 using $P^1$-DG scheme.}
\label{P1b}
\end{table}

\begin{table}[htbp]
\centering
\begin{tabular}{|c|cccc||cccc|}
\hline
$P^2$ DG&\multicolumn{4}{|c||}{Without limiter}  &\multicolumn{4}{|c|}{With IRP limiter}
\\ \hline
N &$L^{\infty}$Error &Order &$L^1$Error & Order &$L^{\infty}$Error &Order &$L^1$Error & Order\\
\hline
16    &2.84E-03   &/    &6.37E-04  &/     &3.66E-03  &/    &7.00E-04  &/ \\
32    &4.04E-04   &2.82 &7.92E-05  &3.01  &4.04E-04  &3.18 &8.59E-05  &3.03 \\
64    &5.11E-05   &2.98 &9.83E-06  &3.01  &5.11E-05  &2.98 &1.00E-05  &3.10 \\
128   &6.39E-06   &3.00 &1.22E-06  &3.01  &6.39E-06  &2.99 &1.23E-06  &3.03 \\
256   &8.06E-07   &2.99 &1.51E-07  &3.01  &8.06E-07  &2.97 &1.52E-07  &3.02 \\
\hline
\end{tabular}
\vspace{5pt}
\caption{Accuracy test for density function in Example 2 using $P^2$-DG scheme.}
\label{P2b}
\end{table}

\begin{table}[htbp]
\centering
\begin{tabular}{|c|c|c|c|c|}
\hline
$N\times N$ &(i,j) &$q_{\max}$ &$q(\mathbf{\bar{w}}_j)$ &$\theta$ \\
\hline
$32\times 32$ &(7,1) &1.95E-02 &-3.08E-02 &0.6119\\
$64\times 64$ &(15,1) &4.93E-03 &-7.77E-03 &0.6120\\
$128\times 128$ &(31,1) &1.23E-03 &-1.95E-03 &0.6120\\
$256\times 256$ &(63,1) &3.09E-04 &-4.87E-04 &0.6120\\
$512\times 512$ &(127,1) &7.72E-05 &-1.22E-04 &0.6120\\
\hline
\end{tabular}
\vspace{5pt}
\caption{Some parameter values when the limiter is first called in the $P^1$-DG scheme solving Example 2. }
\label{vol2DP1}
\end{table}

\begin{table}[htbp]
\centering
\begin{tabular}{|c|c|c|c|c|}
\hline
$N\times N$ &(i,j) &$q_{\max}$ &$q(\mathbf{\bar{w}}_j)$ &$\theta$ \\
\hline
$16\times 16$ &(3,1) &1.98E-03 &-0.12 &0.9836\\
$32\times 32$ &(7,1) &8.26E-05 &-3.08E-02 &0.9973\\
$64\times 64$ &(17,1) &1.52E-06 &-7.60E-03 &0.9998\\
$128\times 128$ &(33,1) &2.88E-07 &-1.84E-03 &0.9998\\
$256\times 256$ &(65,1) &2.69E-08 &-4.55E-04 &0.9999\\
\hline
\end{tabular}
\vspace{5pt}
\caption{Some parameter values when the limiter is first called in the $P^2$-DG scheme solving Example 2. }
\label{vol2DP2}
\end{table}

In the following examples, we test the IRP DG  schemes solving 1D and 2D Riemann problems, respectively.  We compare the results between those with the IRP limiter  (\ref{IRlimiter}) and those with only positivity-preserving limiter; that is, using $\theta = \min\{1,\theta _1,\theta _2\}$, where $\theta _1$ and $\theta _2$ are defined as in (\ref{thetaEuler}). \\


\noindent{\bf Example 3.}  {\sl 1D Sod tube problem}\\
Consider the Sod initial data:
\begin{align*}
(\rho, m, E)=
\begin{cases}
(1, 0, 2.5), \quad &x<0,\\
(0.125, 0, 0.25), \quad &x\geq 0.
\end{cases}
\end{align*}
The exact solution, which can be obtained by using the formula given in \cite[section 14.11,14.12]{FVLeVeque}, consists of a composite wave, that is, a rarefaction wave followed by a contact discontinuity and then by a shock.   The numerical solution obtained from the $P^2$-DG scheme on 200 cells at final time $ T=0.16$ is displayed in Figure \ref{fig:Sod}, from which we can see that the IRP limiter helps to reduce the oscillations near the interface between the rarefaction waves and the contact discontinuity. It also helps to damp the overshoots and the undershoots. For example, with positivity-preserving limiter alone, the maximum and minimum of velocity solution is $1.004$ and $-0.177$,  respectively; while by using the IRP limiter, the maximum and minimum of velocity become $0.998$ and $-0.171$,  respectively.\\

\begin{figure}[htbp]
\centering
\includegraphics[height = 1.8in]{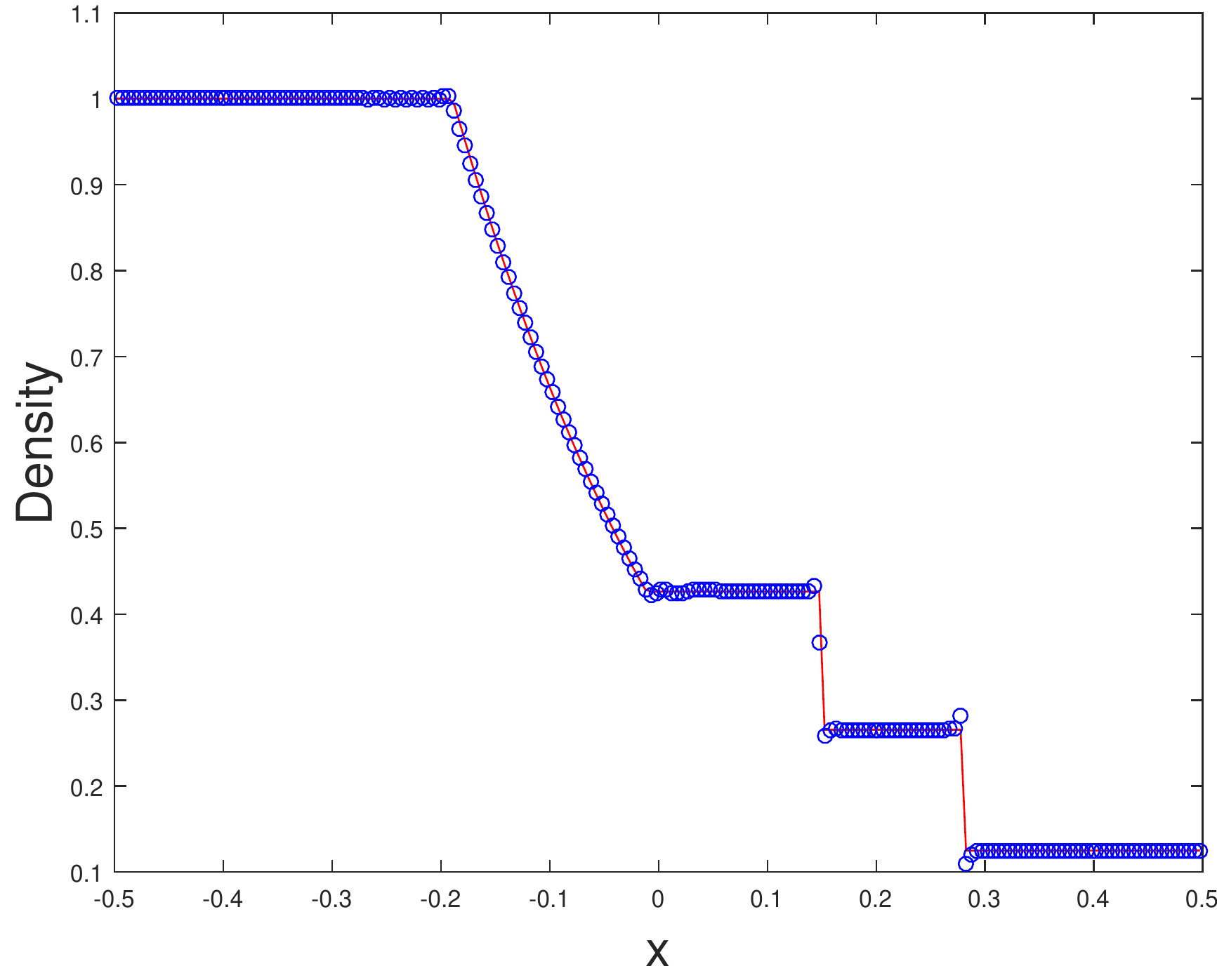}
\includegraphics[height = 1.8in]{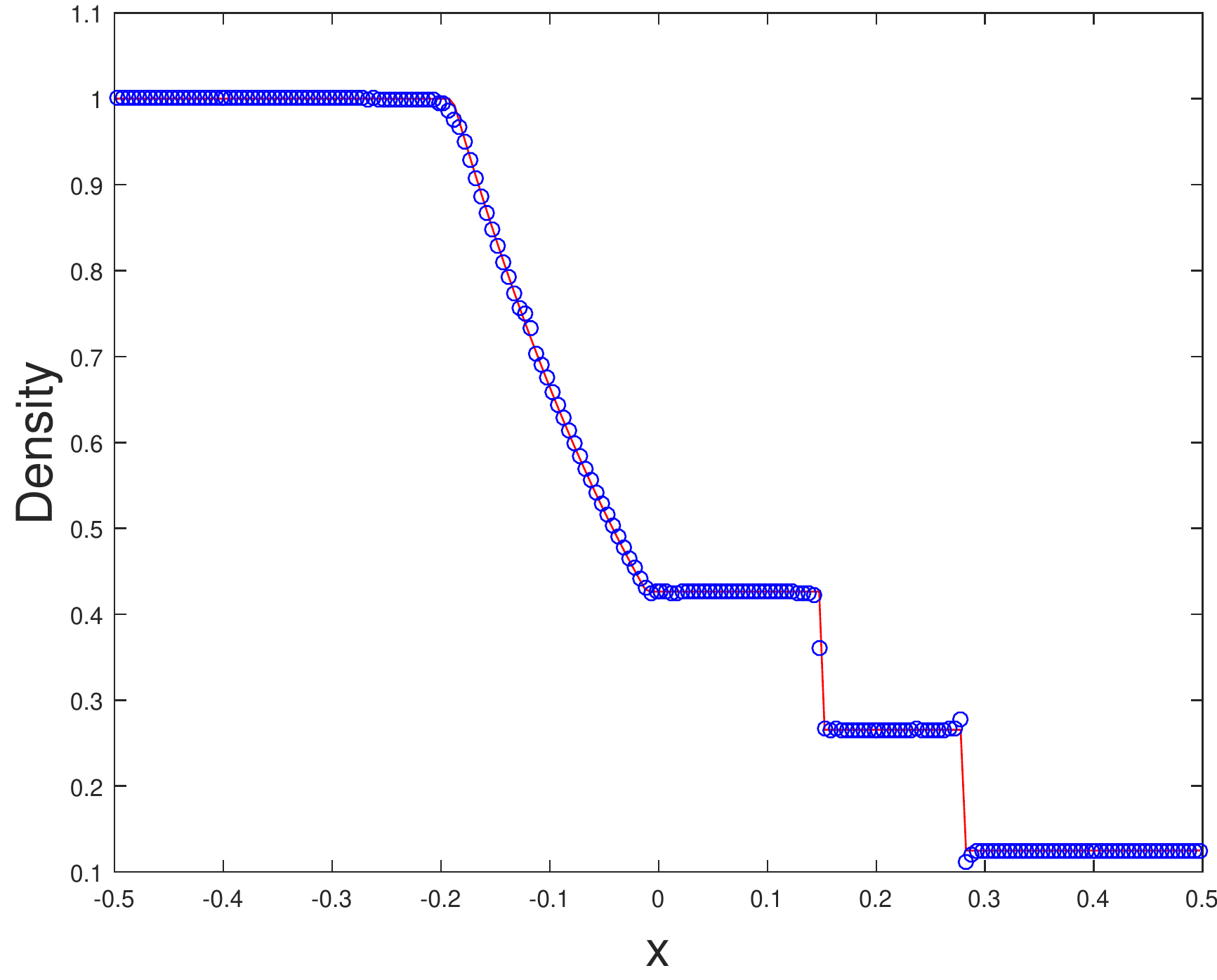}\\
\includegraphics[height = 1.8in]{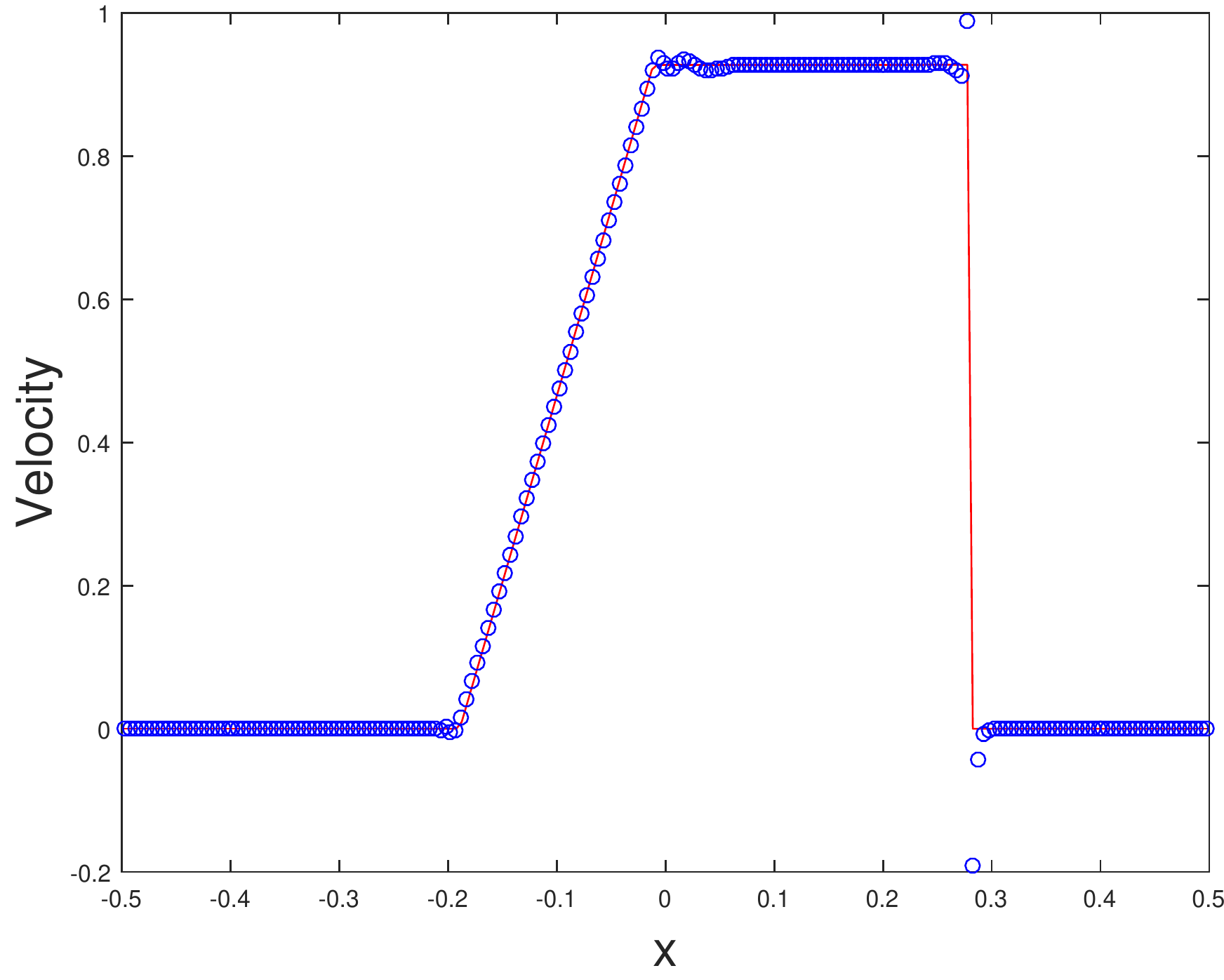}
\includegraphics[height = 1.8in]{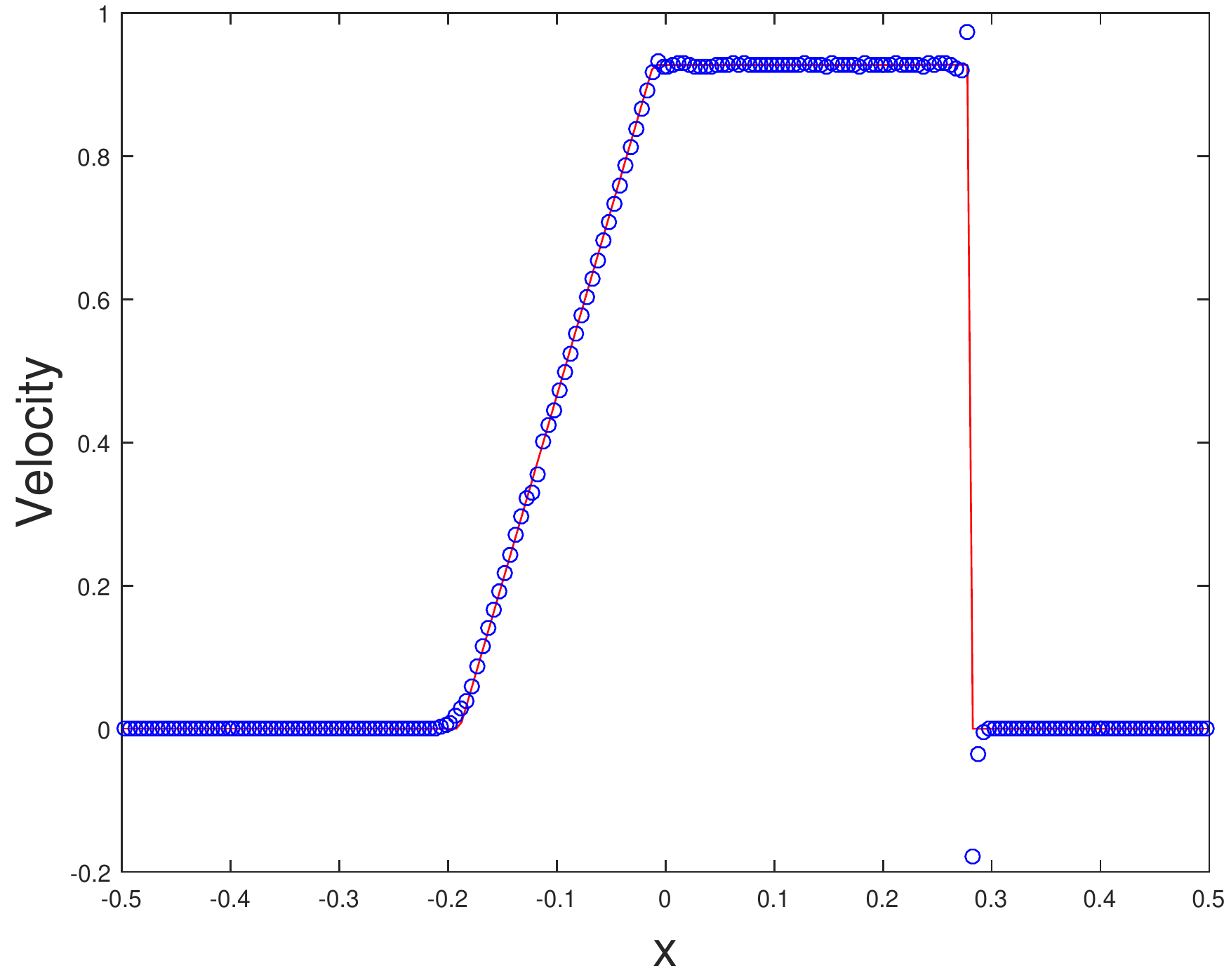}\\
\includegraphics[height = 1.8in]{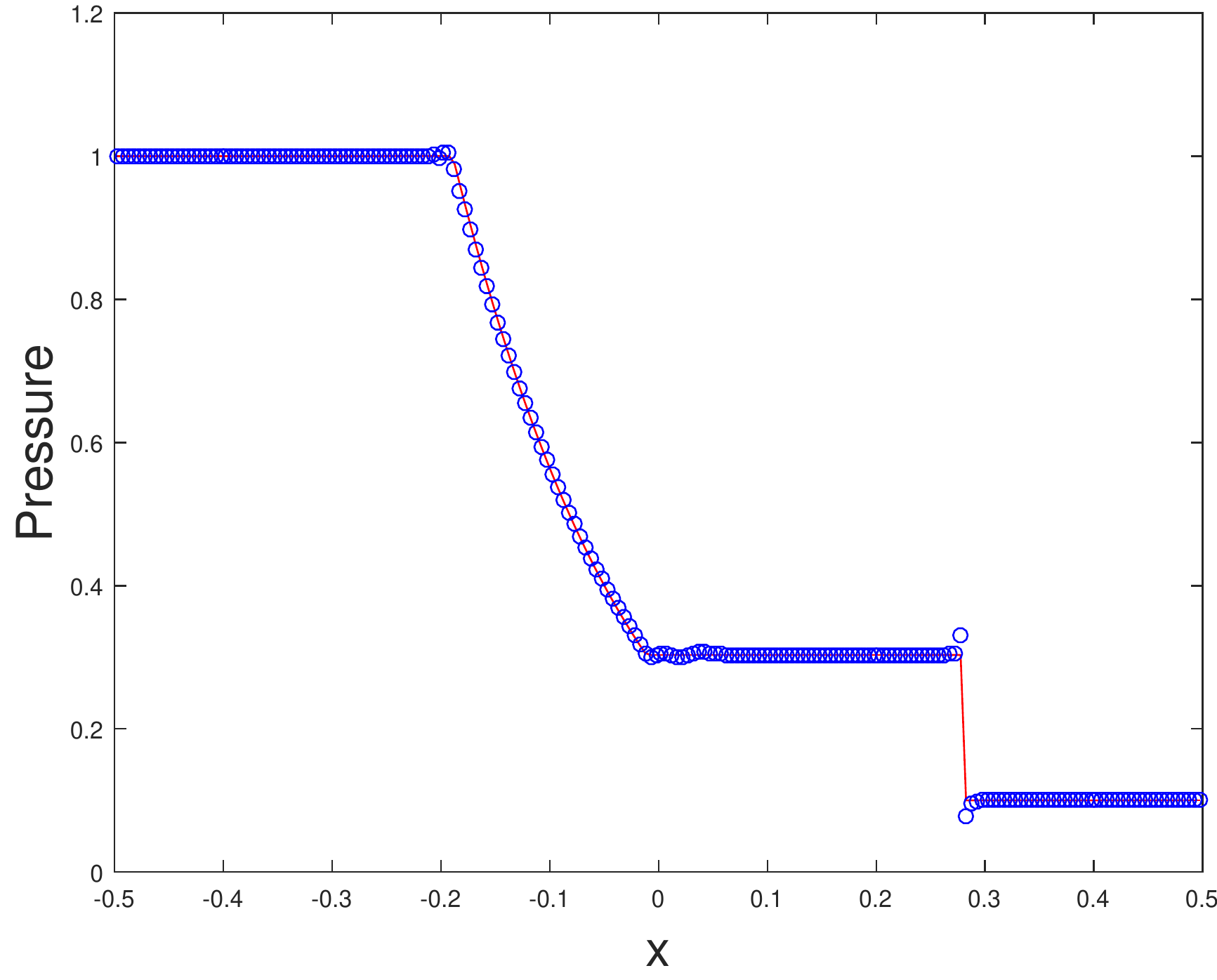}
\includegraphics[height = 1.8in]{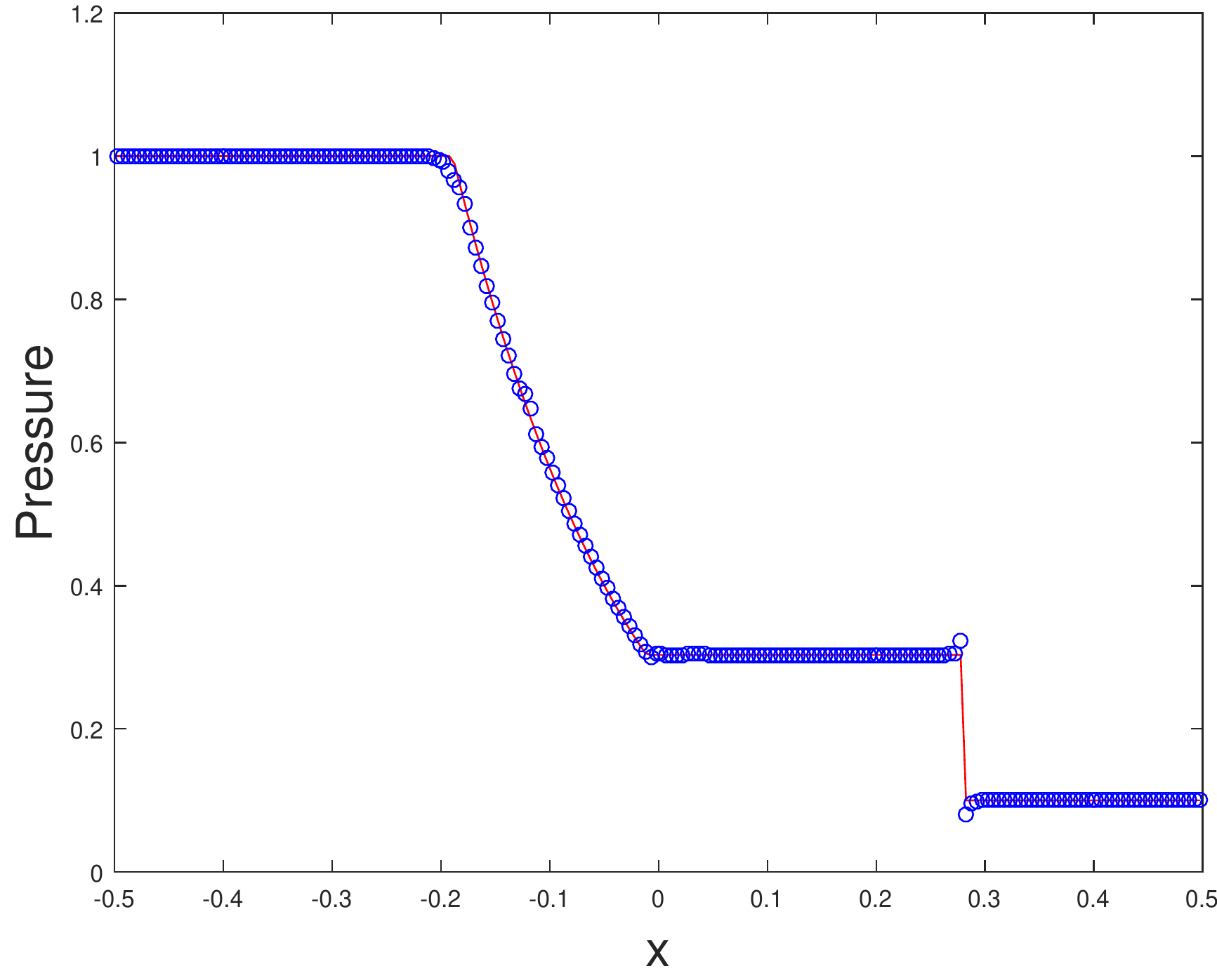}
\caption{Sod shock tube problem. Exact solution (solid line) vs numerical solution (dots); Left: With positive-preserving limiter; Right: With invariant-region-preserving limiter}
\label{fig:Sod}
\end{figure}  

\noindent{\bf Example 4.}{\sl 1D double rarefaction problem}\\
We consider the one dimensional Riemann problem with initial condition
\begin{align*}
(\rho, u, p)=
\begin{cases}
(1, -12, 1), \quad &x<0,\\
(1, 12, 1), \quad &x\geq 0.
\end{cases}
\end{align*}  
The exact solution consists of two double rarefaction waves moving in opposite directions, which results in the creation of a vacuum in the center of the domain. 
For this problem, we use the global Lax-Friedrich flux and a reduced time step in order to avoid the blow-ups in the computation. The numerical solution obtained from the $P^2$-DG scheme on 400 cells is displayed at the final time $T=0.3$ in Figure \ref{fig:DF}, from which we see that both numerical schemes capture the vacuum region well. Moreover, the IRP DG scheme can help to damp the 
{overshoots} near the top of the rarefactions.  Although we use  $\Delta t=\frac{\Delta x}{20\sigma}$ to obtain results in Figrue \ref{fig:DF}, it's been noticed that the time step required to obtain a reasonable solution by the IRP DG scheme ($\Delta t=\frac{\Delta x}{14\sigma}$) is less restricted than the one by the scheme using only positivity-preserving limiter ($\Delta t=\frac{\Delta x}{19\sigma}$).\\

\begin{figure}[htbp]
\centering
\includegraphics[height = 1.8in]{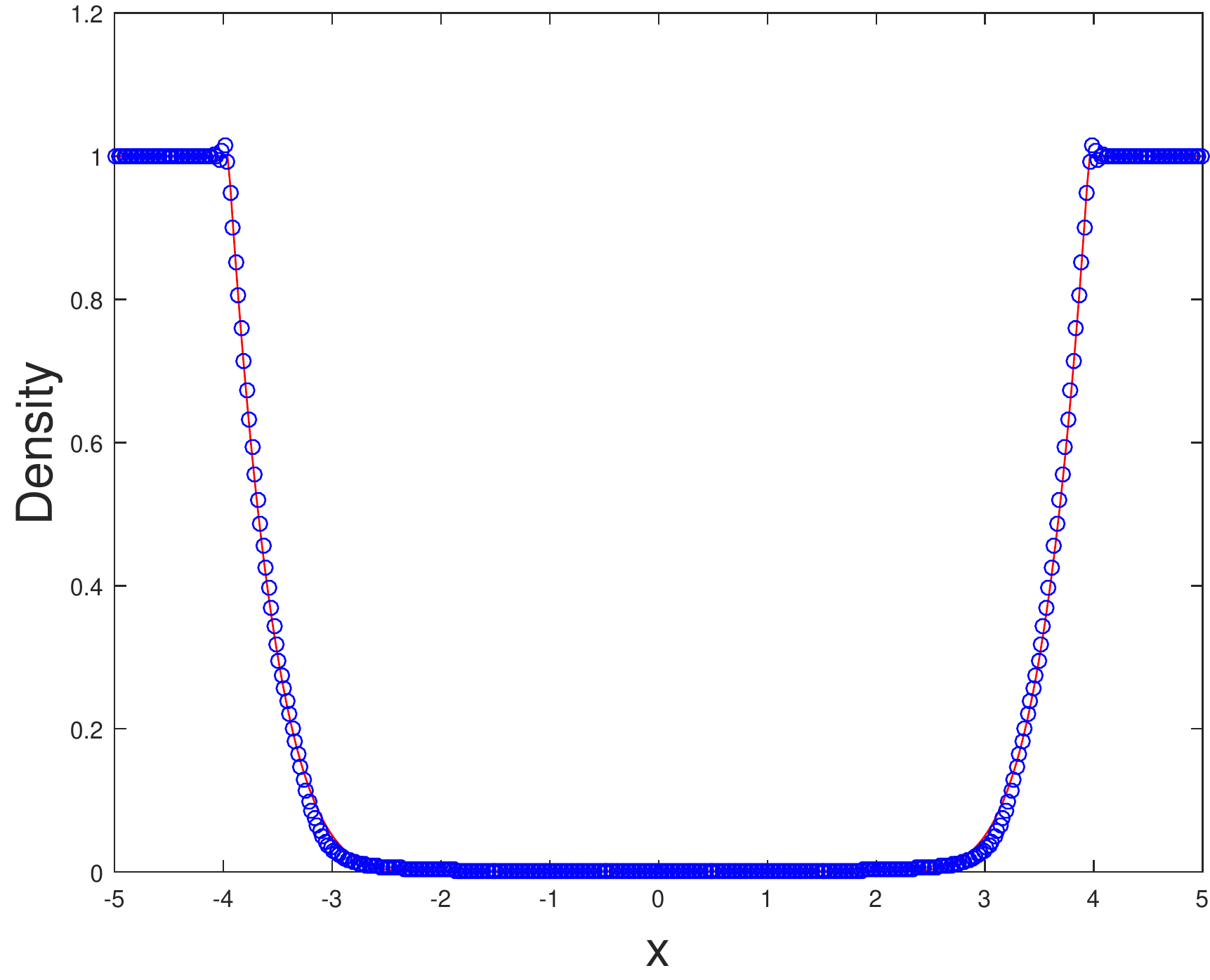}
\includegraphics[height = 1.8in]{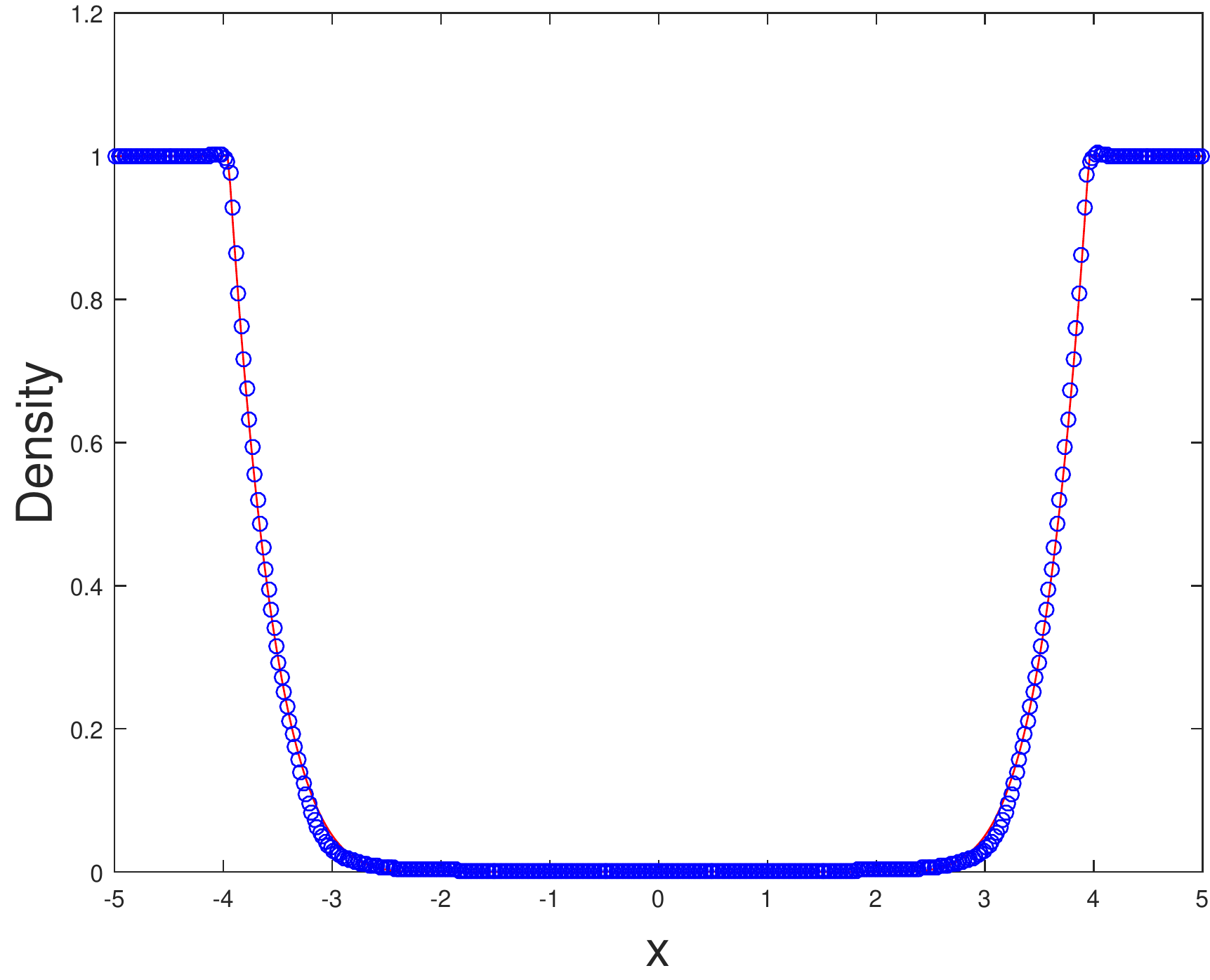}\\
\includegraphics[height = 1.8in]{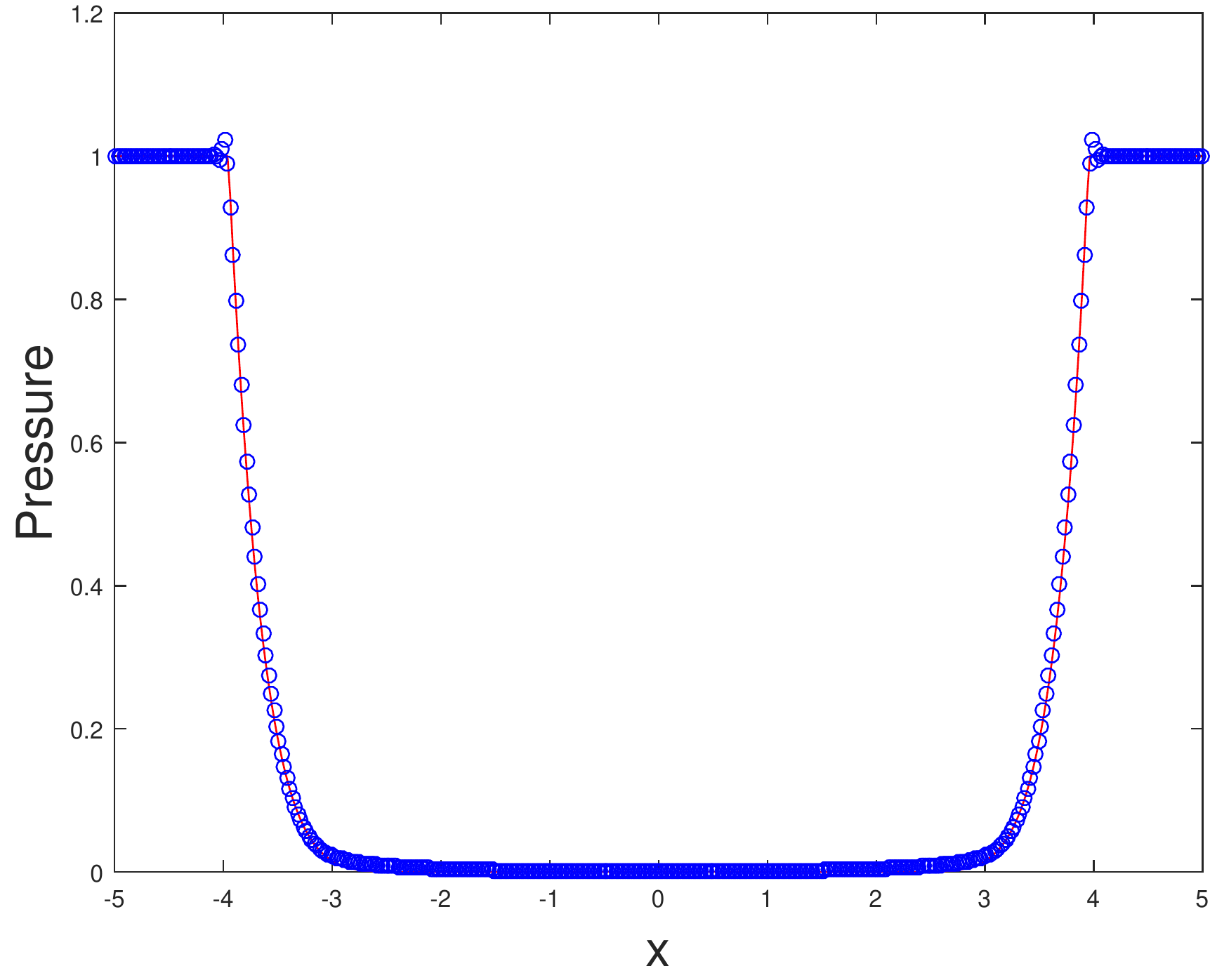}
\includegraphics[height = 1.8in]{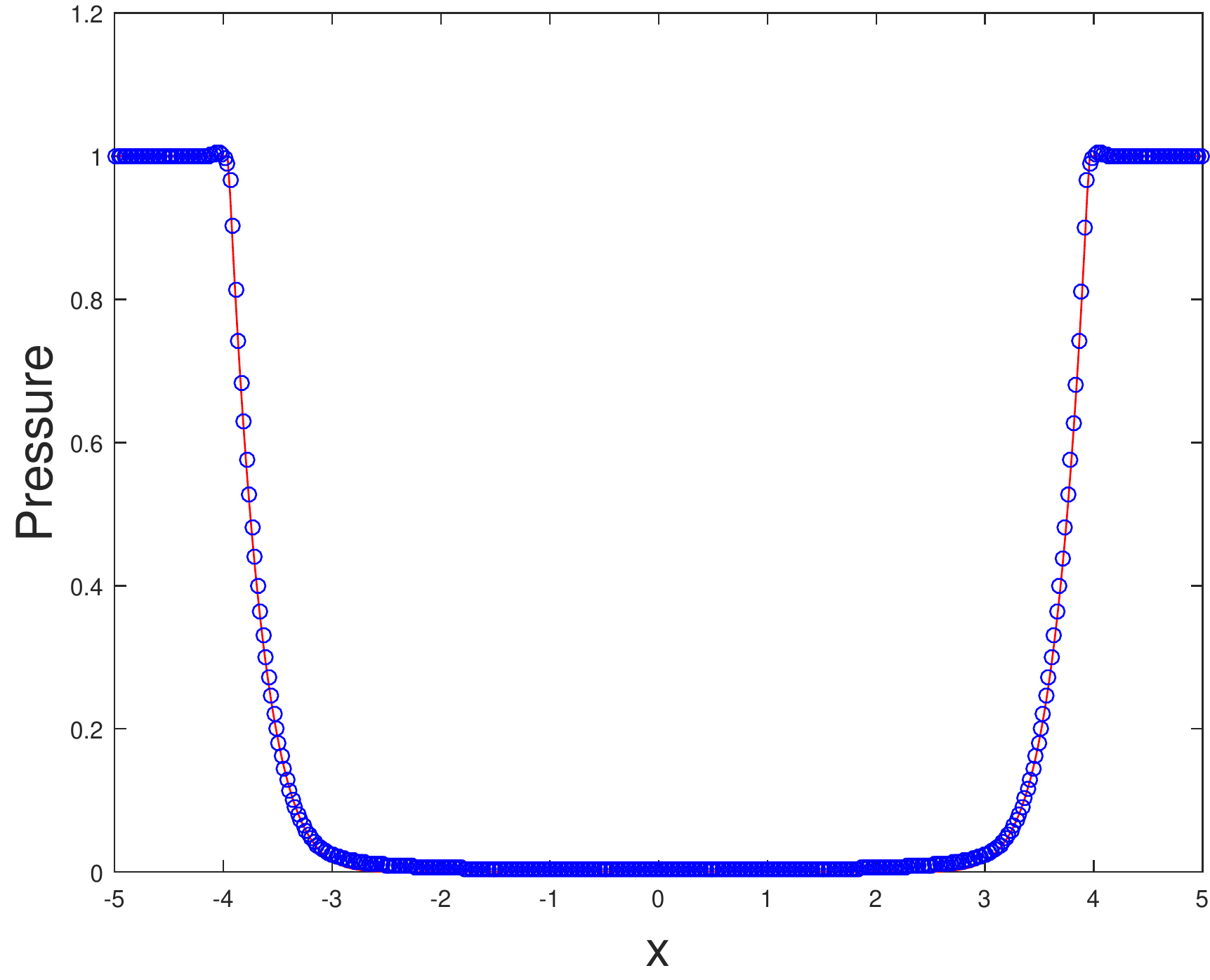}\\
\caption{1D double rarefaction problem. Exact solution (solid line) vs numerical solution (dots); Left: with positivity-preserving limiter; Right: with IRP limiter}
\label{fig:DF}
\end{figure}  

\noindent{\bf Example 5.}{\sl  2D Riemann problem}\\
For two dimensional Euler equations, there are nineteen configurations of Riemann solutions that have been studied in \cite{LL,ZCY}. In this example, we test only two configurations. 
 
The first configuration (Configuration 2 in \cite{LL}) has the initial condition as
\begin{align*}
(\rho, u, v, p)=
\begin{cases}
(1,0,0,1), \quad &(x,y)\in (0.5,1)\times (0.5,1)\\
(0.5197,-0.7259,0,0.4), \quad &(x,y)\in (0,0.5)\times (0.5,1)\\
(1,-0.7259,-0.7259,1), \quad &(x,y)\in (0,0.5)\times (0,0.5)\\
(0.5197,0,-0.7259,0.4), \quad &(x,y)\in (0.5,1)\times (0,0.5)
\end{cases}
.
\end{align*}
The solution consists of four rarefaction waves. From the contour plots in Figure \ref{fig:C2}, we can see the IRP limiter helps to make the solution smoother. 

The second configuration (Configuration 6 in \cite{LL}) has the initial condition as
\begin{align*}
(\rho, u, v, p)=
\begin{cases}
(1,0.75,-0.5,1), \quad &(x,y)\in (0.5,1)\times (0.5,1)\\
(2,0.75,0.5,1), \quad &(x,y)\in (0,0.5)\times (0.5,1)\\
(1,-0.75,0.5,1), \quad &(x,y)\in (0,0.5)\times (0,0.5)\\
(3,-0.75,-0.5,1), \quad &(x,y)\in (0.5,1)\times (0,0.5)
\end{cases}
.
\end{align*}
The solution consists of four two-dimensional slip lines. See Figure \ref{fig:C6}. We zoom in the plot near {the lower left and lower right interface between two constant states} and observe that the IRP limiter helps to damp some of the oscillations. See Figure \ref{fig:zmC6}.

\begin{figure}[htbp]
\centering
\includegraphics[height = 1.8in]{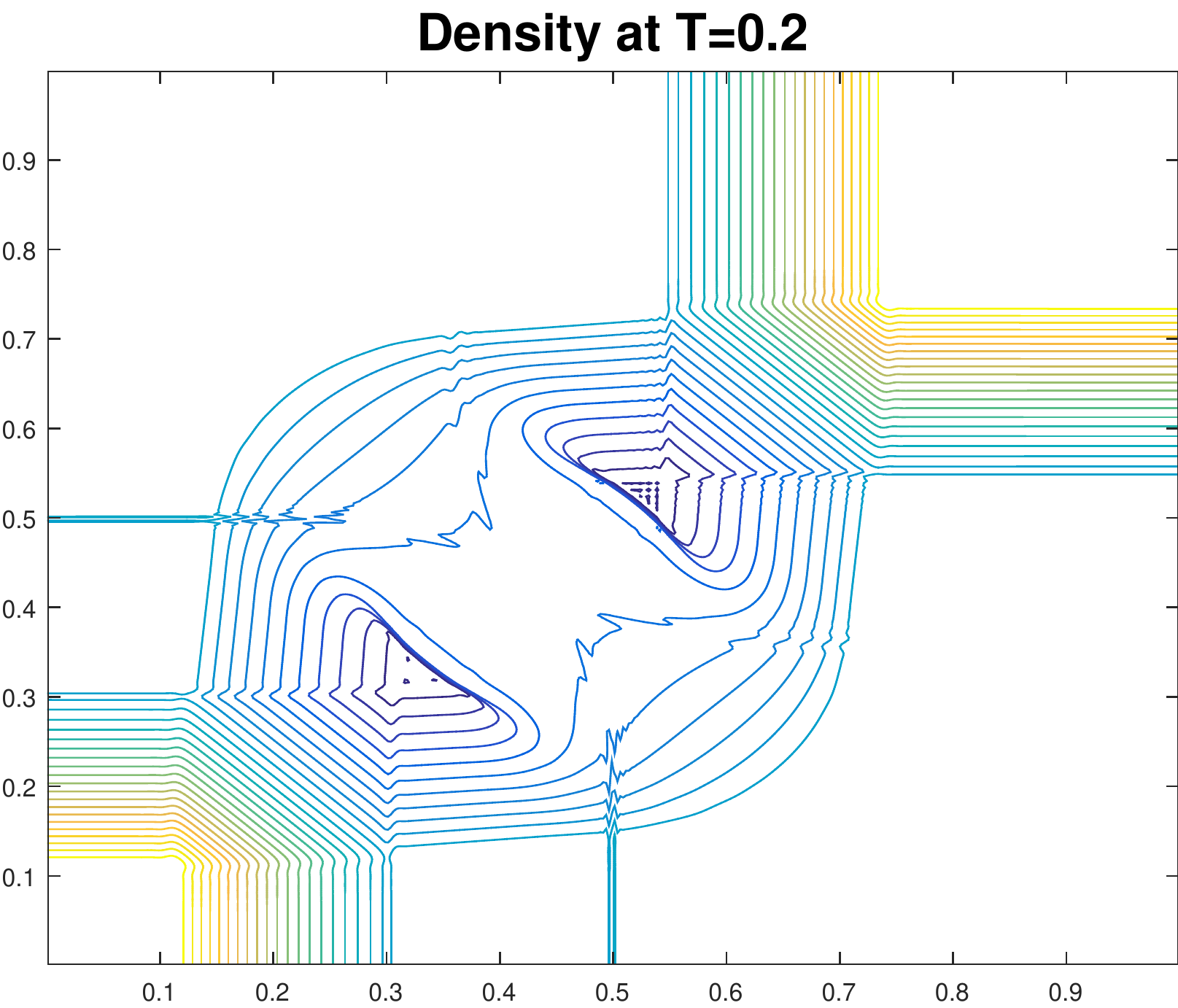}
\includegraphics[height = 1.8in]{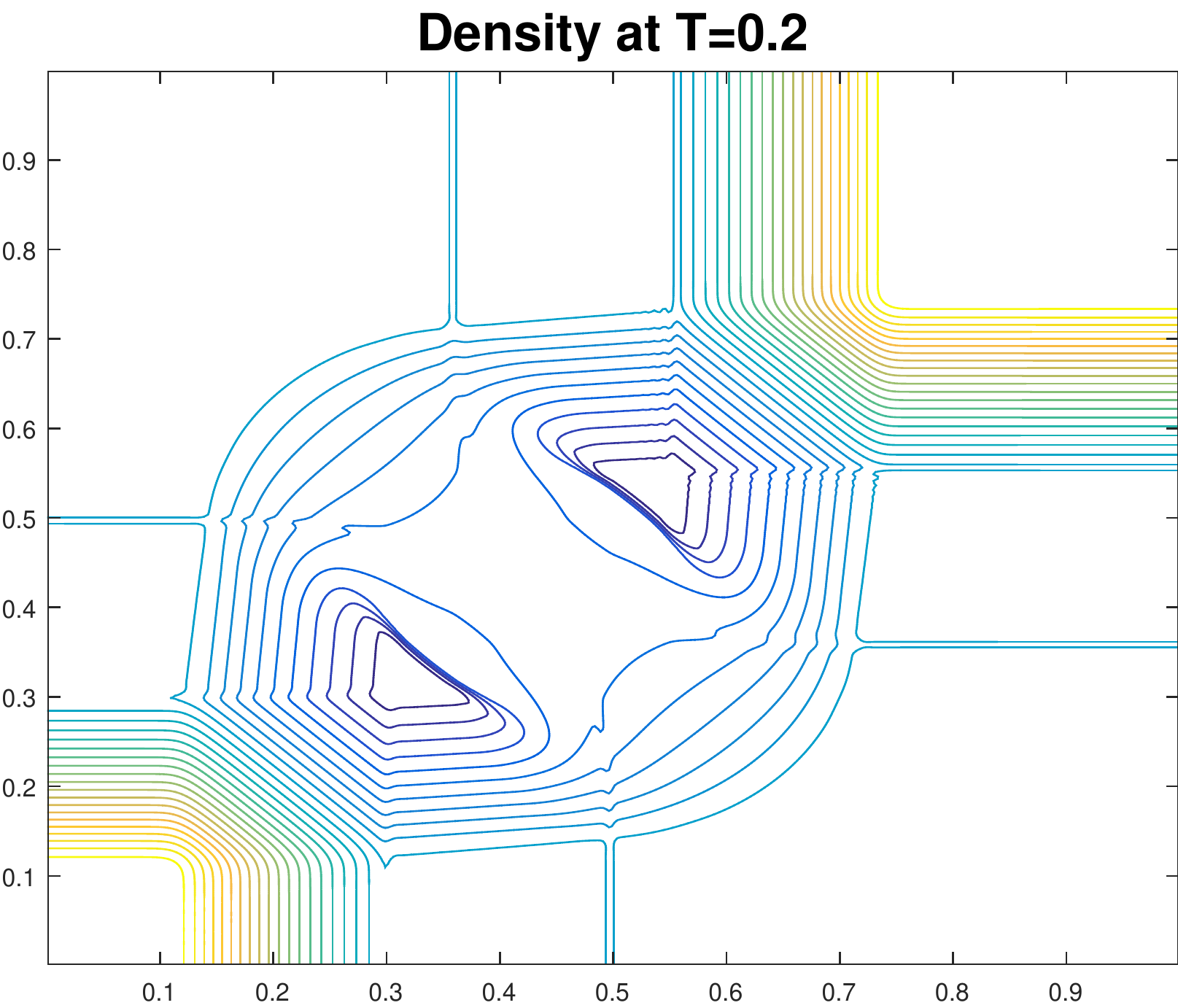}
\caption{Configuration 2. Contour plot of numerical solution of density with 30 contour levels. Left: with positive-preserving limiter; Right: with invariant-region-preserving limiter}
\label{fig:C2}
\end{figure} 

\begin{figure}[htbp]
\centering
\includegraphics[height = 1.8in]{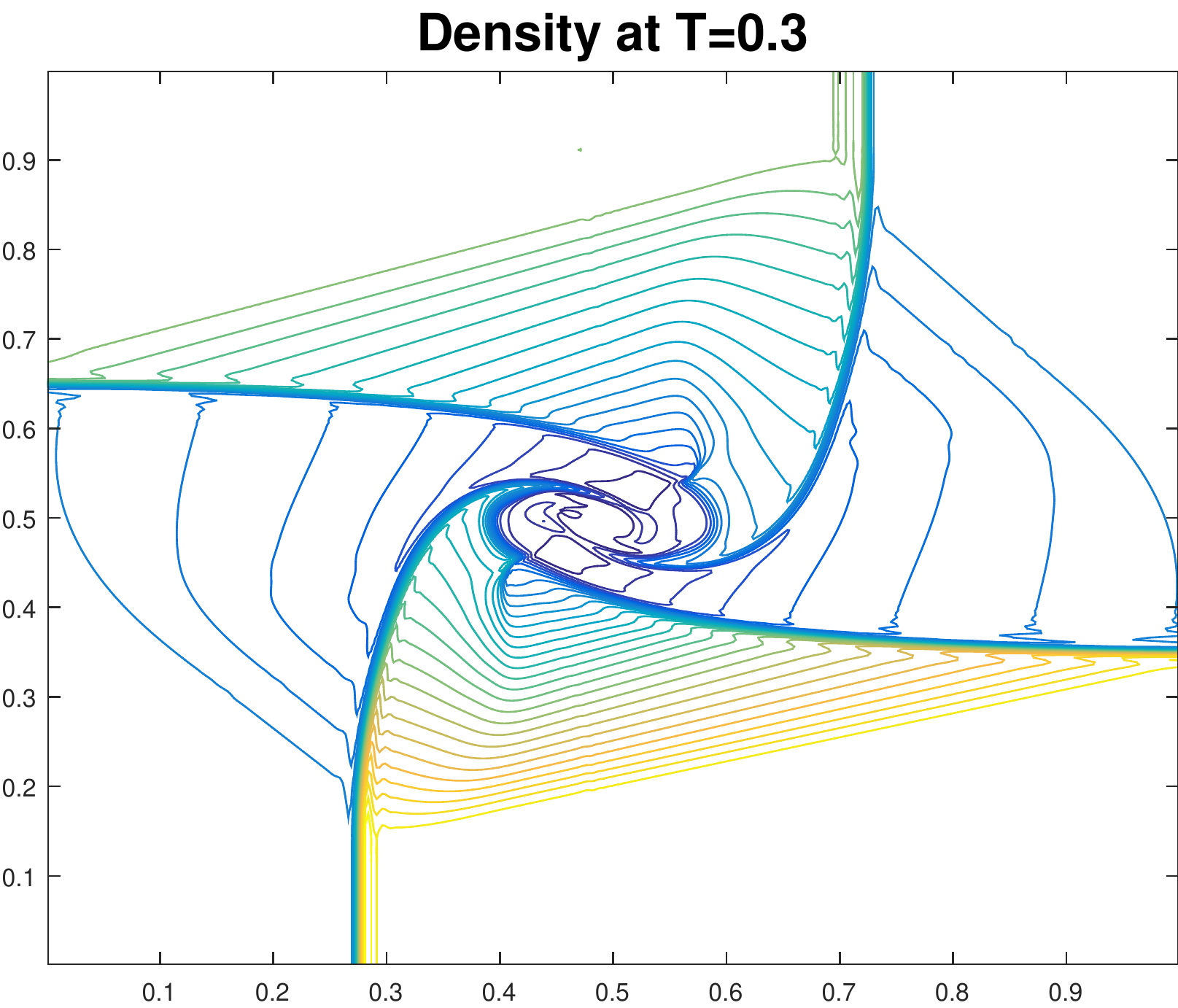}
\includegraphics[height = 1.8in]{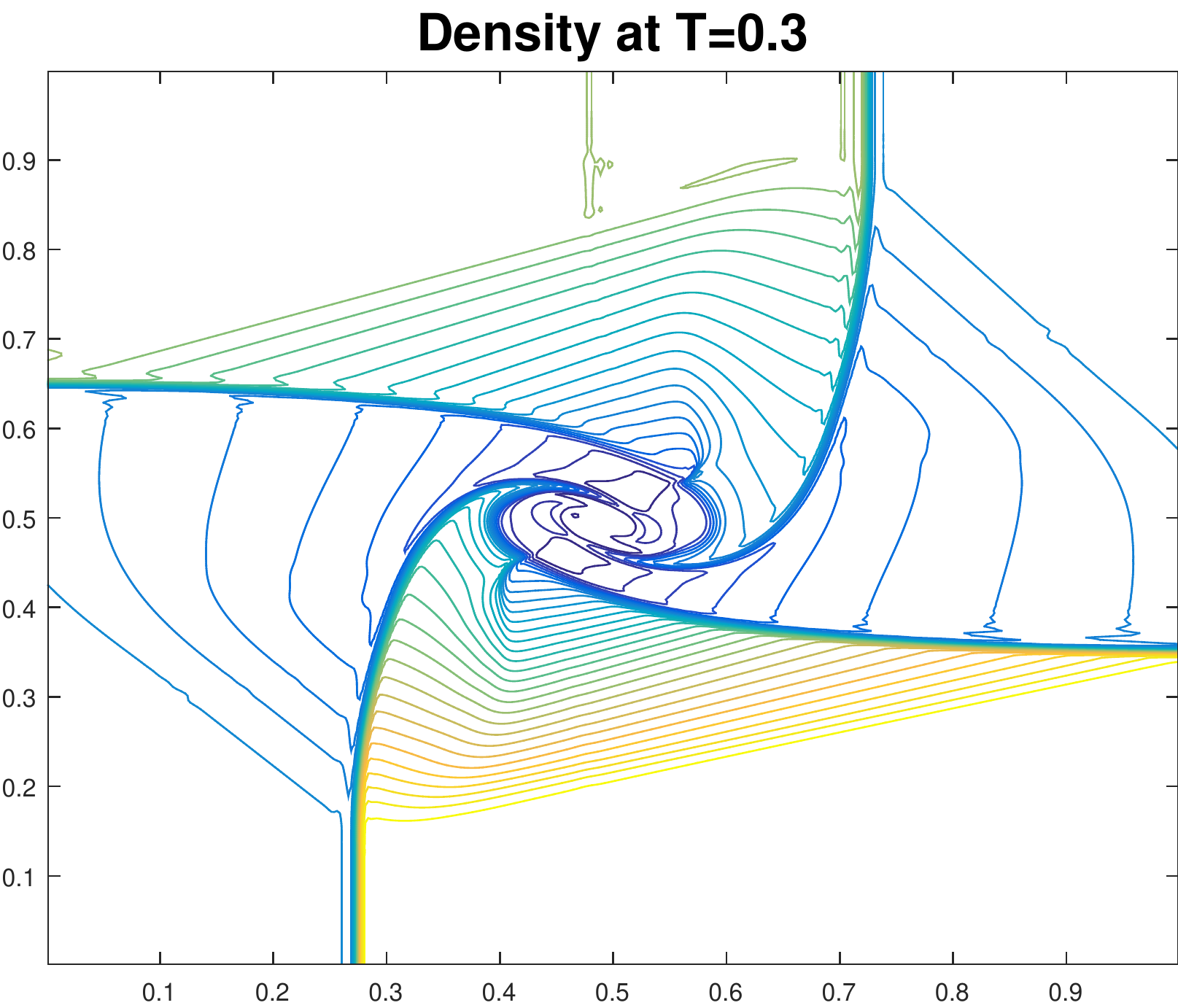}
\caption{Configuration 6. Contour plot of numerical solution of density with 30 contour levels. Left: with positive-preserving limiter; Right: with invariant-region-preserving limiter}
\label{fig:C6}
\end{figure}  

\begin{figure}[htbp]
\centering
\includegraphics[height = 1.8in]{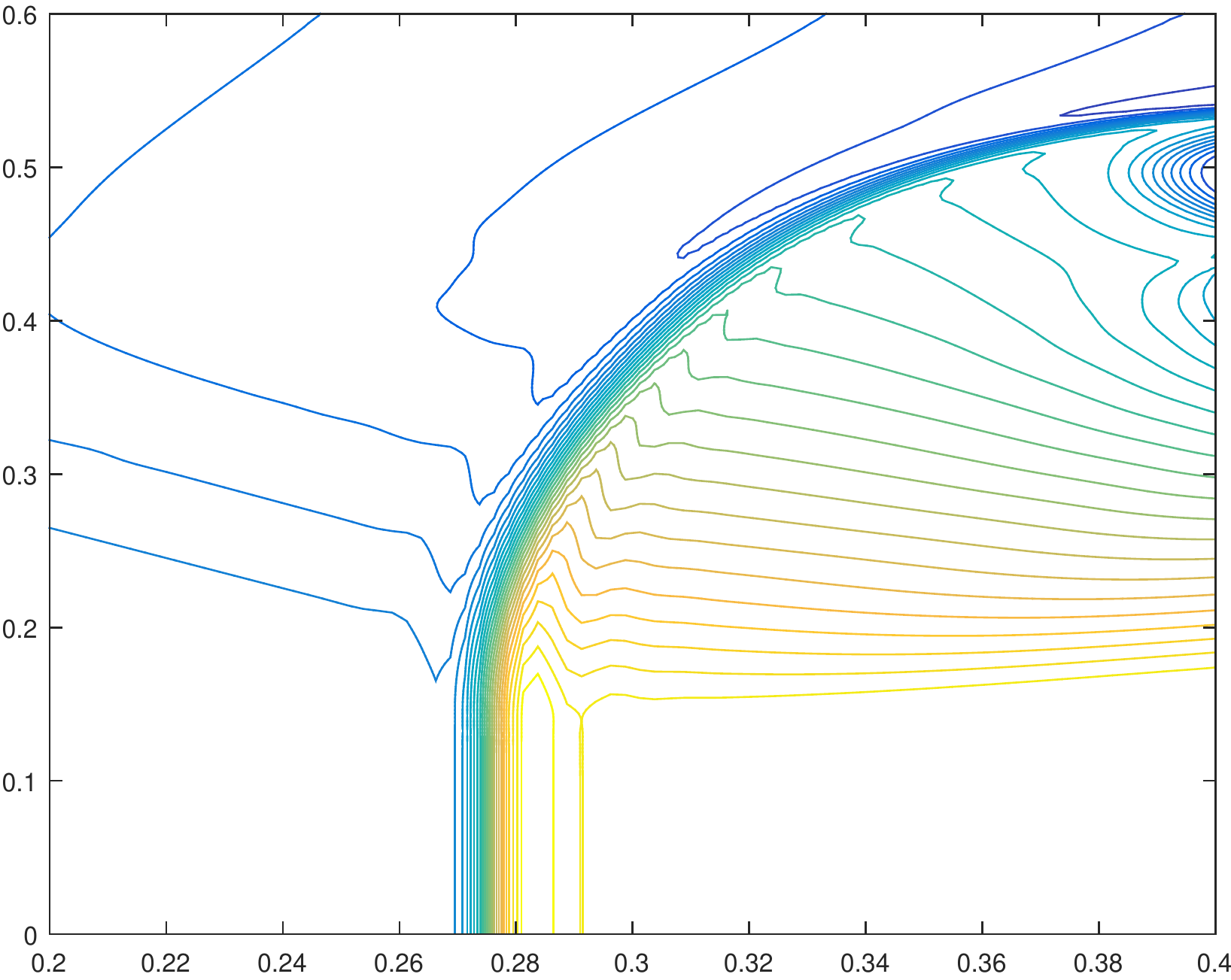}
\includegraphics[height = 1.8in]{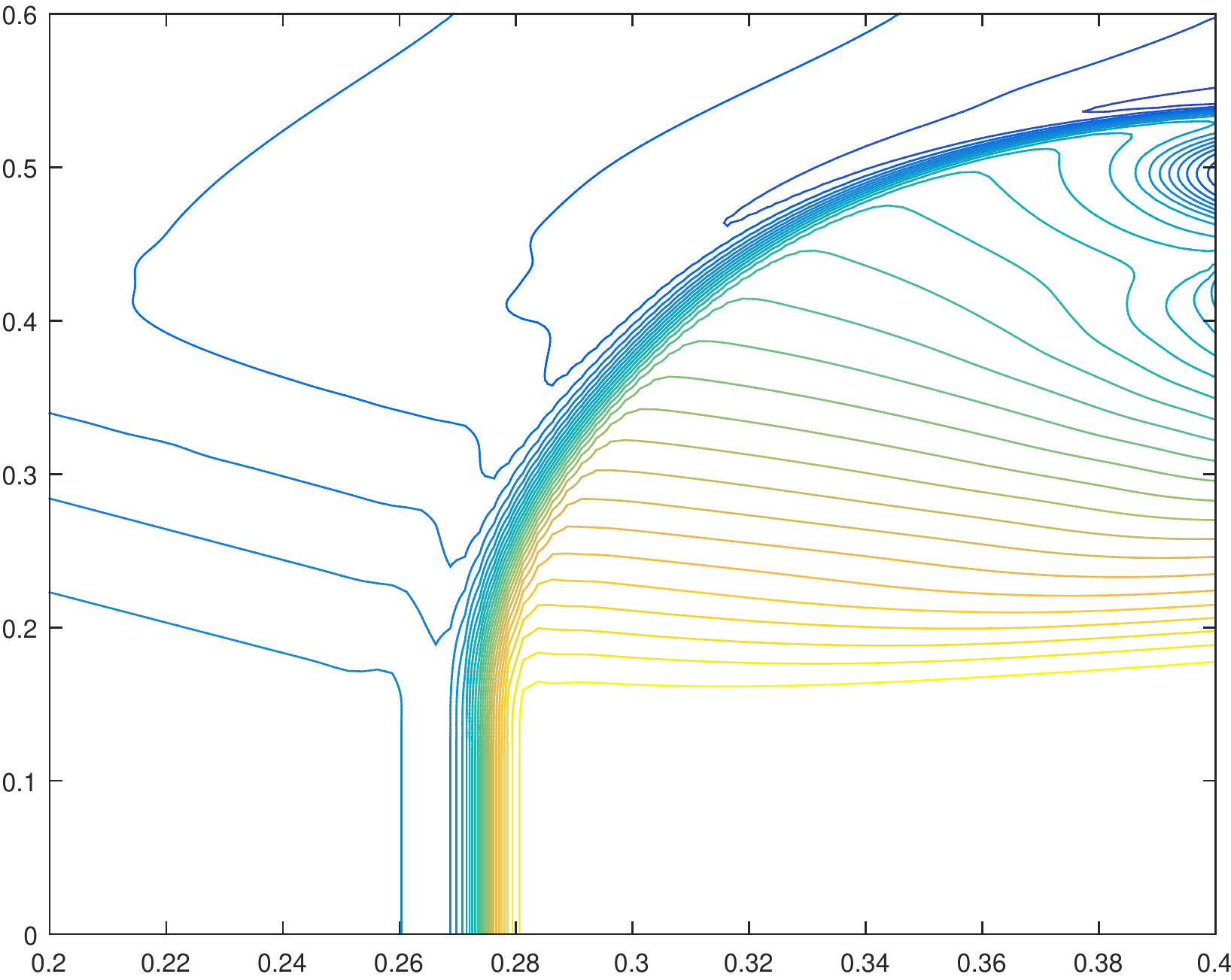}\\
\includegraphics[height = 1.8in]{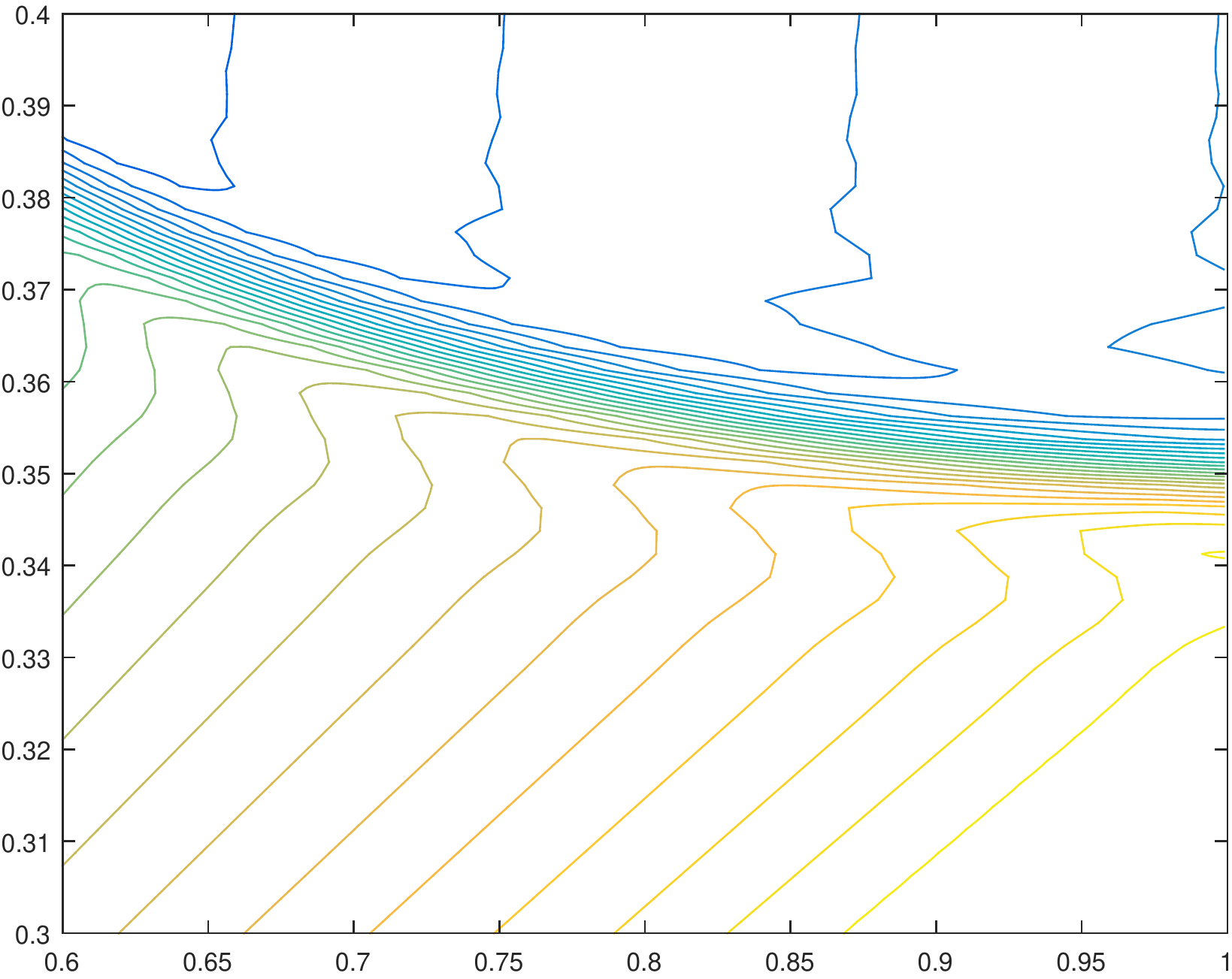}
\includegraphics[height = 1.8in]{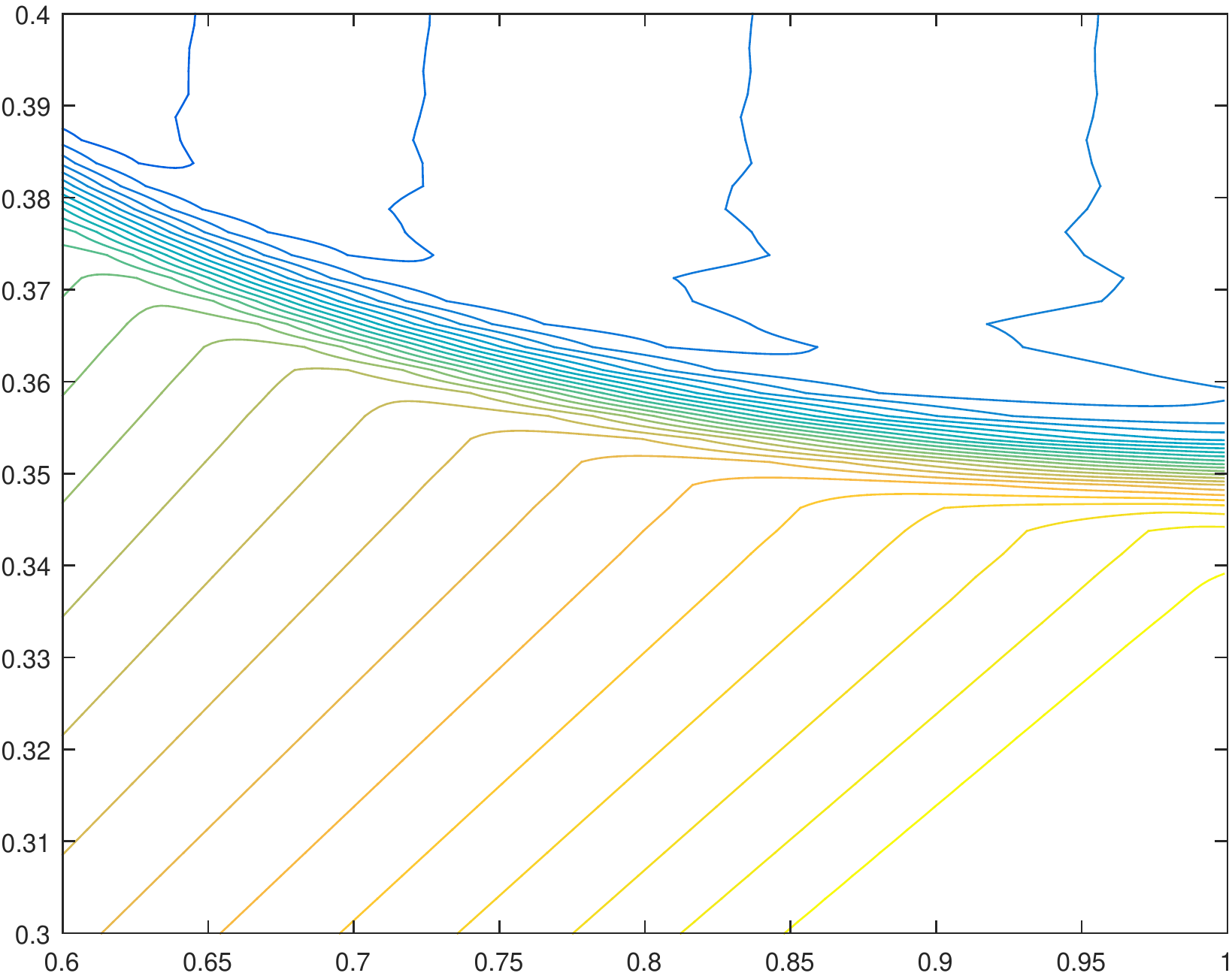}\\
\caption{Zoom-in plot of contour plots of configuration 6. Left: with positive-preserving limiter; Right: with invariant-region-preserving limiter. Top: lower-left interface; Bottom: lower-right interface.}
\label{fig:zmC6}
\end{figure}  

\begin{rem}
From the examples above, we can see that the IRP limiter presented in this work is still a mild limiter, especially for two dimensional case, and oscillations may not be completely damped even the invariant region has been preserved. For stronger oscillations, some stricter limiters may be needed. 
\end{rem}

\section{Concluding remarks}
In this paper we investigate invariant-region-preserving (IRP) DG schemes for multi-dimensional hyperbolic conservation law systems, with an application to the compressible Euler equations.  Assume that the underlying system admits a global invariant region which is a convex set in the phase space, an explicit IRP limiter is implemented in such a way that the cell averages remain in the invariant region for the entire simulation, which adds a degree of robustness to our IRP DG schemes. We  rigorously prove that the invariant region limiter maintains both conservation and high order accuracy. {The loss of accuracy might occur when the cell average is close to the boundary of the convex set. }
A generic algorithm incorporating the IRP limiter is presented for high order finite volume type schemes, and sufficient conditions are further identified if we assume the projected one-dimensional system shares the same invariant region as the full multi-dimensional hyperbolic system. We then apply the results to both one and two dimensional compressible Euler equations so to obtain high order IRP DG schemes. We demonstrate the effectiveness and efficiency of the IRP DG schemes on one- and two-dimensional compressible Euler equations. High-order accuracy is retained after 
applying the IRP limiter to a set of test problems, while some oscillations in the numerical solution are damped by the limiter as desired.

\bigskip
\section*{Acknowledgments} We would like to thank
the associate editor and two reviewers for many constructive comments that improved the presentation of the paper.

\bigskip

\appendix
\numberwithin{equation}{section}
\section{}
In this appendix, we show that for the compressible Euler equations, where the pressure function is concave but not strictly concave, Lemma \ref{U+} still holds. We consider the one-dimensional case, where the invariant region is given in (\ref{IRE}).
\begin{proof}
Since $p$ is concave, using Jensen's inequality and the assumption, we have
\begin{align*}
p(\bar{\mathbf{w}})=p\left(\frac{1}{|K|}\int _K\mathbf{w}(x)dx\right)\geq \frac{1}{|K|}\int _K p(\mathbf{w}(x))dx\geq 0.
\end{align*}
With this, we can show $p(\bar{\mathbf{w}})<0$. Otherwise if $p(\bar{\mathbf{w}})=0$, we must have $p(\mathbf{w}(x))=0$ for all $x\in K $; that is
\begin{align}\label{pconst}
p(\bar{\mathbf{w}})=p(\mathbf{w}).
\end{align}
Upon taking cell average of this relation on both sides, we have 
\begin{align*}
p(\bar{\mathbf{w}})=\frac{1}{|K|}\int _K p(\mathbf{w}(x))dx.
\end{align*}
By taking the Taylor expansion around $\bar{\mathbf{w}}$, we have
\begin{align*}
p(\mathbf{w}(x))=p(\bar{\mathbf{w}})+\triangledown _{\mathbf{w}}p(\bar{\mathbf{w}})\cdot \xi+\xi ^\top H\xi, \quad \forall x\in I, \quad \xi:=\mathbf{w}(x)-\bar{\mathbf{w}},
\end{align*}
which upon integration yields $\frac{1}{|K|}\int _K\xi ^\top H\xi dx=0$, where $H$ is the Hessian matrix of $p$:
\begin{align*}
H = (\gamma -1)\left( {\begin{array}{*{20}{c}}
{-\frac{m^2}{\rho ^3}} &{\frac{m}{\rho ^2}}  & {0}\\
{\frac{m}{\rho ^2}}&{-\frac{1}{\rho}} &0\\
0&0&0
\end{array}} \right).
\end{align*}
Since $p$ is a concave function of $\mathbf{w}=(\rho, m, E)^\top$, then $H$ is semi-definite and we have $\xi ^\top H\xi \equiv 0$. Therefore, $\xi$ must be in the eigenvalue space corresponding to the zero eigenvalue, that is $\xi=c_1v_1+c_2v_2$, where  $v_1=(0,0,1)^\top$ and $v_2=(\frac{\rho(x)}{m(x)},1,0)^\top$.  Hence $m(x)$ and $E(x)$ must be constants,  so is $\rho(x)$ following from (\ref{pconst}), which contradicts the assumption.
\end{proof}

\section{}
In this appendix, we present the proof for Lemma \ref{lemHLLC}.
\begin{proof}
When the HLLC flux is used in (\ref{1stFV}), there are sixteen different cases in total. Among them, four cases have been included in the HLL flux, so we only need to verify the other twelve cases. 

For each case, we rewrite $\mathbf{w}^{n+1}_j$ in (\ref{1stFV}) as a convex linear combination of {some terms that can be shown in the invariant region}. Here we use $\sigma _{k,l}$, $\sigma _{k,r}$ and $\sigma _{k,*}$ to denote the leftmost, rightmost and middle wave speeds at $x_k$ for $k=j\pm 1$, and $(\mathbf{w}_{*l})_k$ and $(\mathbf{w}_{*r})_k$ to denote the two intermediate states at $n-th$ time step corresponding to $x_k$ for $k=j\pm \frac{1}{2}$.\\

\underline{\textit{Case 1}}: If $\sigma _{j-\frac{1}{2},l}\geq 0$ and $\sigma _{j+\frac{1}{2},l}\leq 0\leq \sigma _{j+\frac{1}{2},*}$, then 
\begin{align*}
\hat{f}_{j-\frac{1}{2}}=f(\mathbf{w}^n_{j-1}), \quad \hat{f}_{j+\frac{1}{2}}=(f_{*l})_{j+\frac{1}{2}}=f(\mathbf{w}^n_j)+\sigma _{j+\frac{1}{2},l}\left((\mathbf{w}_{*l})_{j+\frac{1}{2}}-\mathbf{w}^n_j\right),
\end{align*}
and 
\begin{align*}
\mathbf{w}^{n+1}_j=&\mathbf{w}^n_j-\lambda ((f_{*l})_{j+\frac{1}{2}}-f(\mathbf{w}^n_{j-1}))\\
=& \left(1+\lambda\sigma_{j+\frac{1}{2},l}-\lambda\sigma _{j-\frac{1}{2},r} \right)\mathbf{w}^n_j+\left(-\lambda\sigma_{j+\frac{1}{2},l}\right)(\mathbf{w}_{*l})_{j+\frac{1}{2}}+\lambda\sigma_{j-\frac{1}{2},r}\hat{\mathbf{w}},
\end{align*}
where
\begin{align}\label{hllc1}
\hat{\mathbf{w}}=\frac{\sigma_{j-\frac{1}{2},r}\mathbf{w}^n_j-0\cdot\mathbf{w}^n_{j-1}}{\sigma_{j-\frac{1}{2}}-0}-\frac{f(\mathbf{w}^n_j)-f(\mathbf{w}^n_{j-1})}{\sigma_{j-\frac{1}{2},r}-0}.
\end{align}

\underline{\textit{Case 2}}: If $\sigma _{j-\frac{1}{2},l}\geq 0$ and $\sigma _{j+\frac{1}{2},*}\leq 0\leq \sigma_{j+\frac{1}{2},r}$, then
\begin{align*}
\hat{f}_{j-\frac{1}{2}}=f(\mathbf{w}^n_{j-1}), \quad \hat{f}_{j+\frac{1}{2}}=(f_{*r})_{j+\frac{1}{2}}=(f_{*l})_{j+\frac{1}{2}}+\sigma_{j+\frac{1}{2},*}\left((\mathbf{w}_{*r})_{j+\frac{1}{2}}-(\mathbf{w}_{*l})_{j+\frac{1}{2}} \right),
\end{align*}
and
\begin{align*}
\mathbf{w}^{n+1}_j=&\mathbf{w}^n_j-\lambda ((f_{*r})_{j+\frac{1}{2}}-f(\mathbf{w}^n_{j-1}))\\
=&\mathbf{w}^n_j-\lambda ((f_{*l})_{j+\frac{1}{2}}-f(\mathbf{w}^n_{j-1}))-\lambda\sigma_{j+\frac{1}{2},*}\left((\mathbf{w}_{*r})_{j+\frac{1}{2}}-(\mathbf{w}_{*l})_{j+\frac{1}{2}} \right)\\
=&\left(1+\lambda\sigma_{j+\frac{1}{2},l}-\lambda\sigma_{j-\frac{1}{2},r}\right)\mathbf{w}^n_j+\lambda \sigma_{j-\frac{1}{2},r}\hat{\mathbf{w}}\\
&+\lambda\left(\sigma_{j+\frac{1}{2},*}-\sigma_{j+\frac{1}{2},l}\right)(\mathbf{w}_{*l})_{j+\frac{1}{2}}+\left(-\lambda\sigma_{j+\frac{1}{2},*}\right)(\mathbf{w}_{*r})_{j+\frac{1}{2}},
\end{align*}
where $\hat{\mathbf{w}}$ is given by (\ref{hllc1}).\\

\underline{\textit{Case 3}}: If $\sigma_{j-\frac{1}{2},*}\leq 0\leq \sigma_{j-\frac{1}{2},r}$ and $\sigma_{j+\frac{1}{2},r}\leq 0$, then
\begin{align*}
\hat{f}_{j-\frac{1}{2}}=(f_{*r})_{j-\frac{1}{2}}=f(\mathbf{w}^n_j)+\sigma_{j-\frac{1}{2},r}\left((\mathbf{w}_{*r})_{j-\frac{1}{2}}-\mathbf{w}^n_j\right),\quad \hat{f}_{j+\frac{1}{2}}=f(\mathbf{w}^n_{j+1}),
\end{align*}
and
\begin{align*}
\mathbf{w}^{n+1}_j=&\mathbf{w}^n_{j}-\lambda\left(f(\mathbf{w}^n_{j+1})-(f_{*r})_{j-\frac{1}{2}}\right)\\
=&\lambda\left(\sigma_{j+\frac{1}{2},r}-\sigma_{j+\frac{1}{2},l}\right)\hat{\mathbf{w}}+\left(-\lambda\sigma_{j+\frac{1}{2},r}\right)\mathbf{w}^n_{j+1}\\
&+\left(1-\lambda\sigma_{j-\frac{1}{2},r}+\lambda\sigma_{j+\frac{1}{2},l}\right)\mathbf{w}^n_j+\lambda\sigma_{j-\frac{1}{2},r}(\mathbf{w}_{*r})_{j-\frac{1}{2}},
\end{align*}
where 
\begin{align}\label{hllc3}
\hat{\mathbf{w}}=\frac{\sigma_{j+\frac{1}{2},r}\mathbf{w}^n_{j+1}-\sigma_{j+\frac{1}{2},l}\mathbf{w}^n_j}{\sigma_{j+\frac{1}{2},r}-\sigma_{j+\frac{1}{2},l}}-\frac{f(\mathbf{w}^n_{j+1})-f(\mathbf{w}^n_j)}{\sigma_{j+\frac{1}{2},r}-\sigma_{j+\frac{1}{2},l}}.
\end{align}

\underline{\textit{Case 4}}: If $\sigma_{j-\frac{1}{2},l}\leq 0\leq \sigma_{j-\frac{1}{2},*}$ and $\sigma_{j+\frac{1}{2},r}\leq 0$, then
\begin{align*}
\hat{f}_{j-\frac{1}{2}}=(f_{*l})_{j-\frac{1}{2}}=(f_{*r})_{j-\frac{1}{2}}-\sigma_{j-\frac{1}{2},*}\left((\mathbf{w}_{*r})_{j-\frac{1}{2}}-(\mathbf{w}_{*l})_{j-\frac{1}{2}} \right), \quad \hat{f}_{j+\frac{1}{2}}=f(\mathbf{w}^n_{j+1}),
\end{align*}
and
\begin{align*}
\mathbf{w}^{n+1}_j=&\mathbf{w}^n_{j}-\lambda\left(f(\mathbf{w}_{j+1})-(f_{*l})_{j-\frac{1}{2}}\right)\\
=&\mathbf{w}^n_{j}-\lambda\left(f(\mathbf{w}^n_{j+1})-(f_{*r})_{j-\frac{1}{2}}\right)-\lambda\sigma_{j-\frac{1}{2},*}\left((\mathbf{w}_{*r})_{j-\frac{1}{2}}-(\mathbf{w}_{*l})_{j-\frac{1}{2}}\right)\\
=&\lambda\left(\sigma_{j+\frac{1}{2},r}-\sigma_{j+\frac{1}{2},l}\right)\hat{\mathbf{w}}+\left(-\lambda\sigma_{j+\frac{1}{2},r}\right)\mathbf{w}^n_{j+1}+\lambda\sigma_{j-\frac{1}{2},*}(\mathbf{w}_{*l})_{j-\frac{1}{2}}\\
&+\left(1-\lambda\sigma_{j-\frac{1}{2},r}+\lambda\sigma_{j+\frac{1}{2},l}\right)\mathbf{w}^n_j+\lambda\left(\sigma_{j-\frac{1}{2},r}-\sigma_{j-\frac{1}{2},*}\right)(\mathbf{w}_{*r})_{j-\frac{1}{2}},
\end{align*}
where $\hat{\mathbf{w}}$ is given by (\ref{hllc3}).\\

\underline{\textit{Case 5}}: If $\sigma_{j-\frac{1}{2},l}\leq 0\leq \sigma_{j-\frac{1}{2},*}$ and $\sigma_{j+\frac{1}{2},l}\leq 0\leq \sigma_{j+\frac{1}{2},*}$, then
\begin{align*}
&\hat{f}_{j-\frac{1}{2}}=(f_{*l})_{j-\frac{1}{2}}=(f_{*r})_{j-\frac{1}{2}}-\sigma_{j-\frac{1}{2},*}\left((\mathbf{w}_{*r})_{j-\frac{1}{2}}-(\mathbf{w}_{*l})_{j-\frac{1}{2}} \right),\\
&\hat{f}_{j+\frac{1}{2}}=(f_{*l})_{j+\frac{1}{2}}=f(\mathbf{w}^n_j)+\sigma _{j+\frac{1}{2},l}\left((\mathbf{w}_{*l})_{j+\frac{1}{2}}-\mathbf{w}^n_j\right),
\end{align*}
and
\begin{align*}
\mathbf{w}^{n+1}_j=&\mathbf{w}^n_j-\lambda\left((f_{*l})_{j+\frac{1}{2}}-(f_{*l})_{j-\frac{1}{2}} \right)\\
=&\left(1+\sigma_{j+\frac{1}{2},l}-\sigma_{j-\frac{1}{2},r}\right)\mathbf{w}^n_j+\left(-\lambda\sigma_{j+\frac{1}{2},l}\right)(\mathbf{w}_{*l})_{j+\frac{1}{2}}\\
&+\lambda\left(\sigma_{j-\frac{1}{2},r}-\sigma_{j-\frac{1}{2},*}\right)(\mathbf{w}_{*r})_{j-\frac{1}{2}}+\lambda\sigma_{j-\frac{1}{2},*}(\mathbf{w}_{*l})_{j-\frac{1}{2}}.
\end{align*}

\underline{\textit{Case 6}}: If $\sigma_{j-\frac{1}{2},*}\leq 0\leq \sigma_{j-\frac{1}{2},r}$ and $\sigma_{j+\frac{1}{2},*}\leq 0\leq \sigma_{j+\frac{1}{2},r}$, then
\begin{align*}
&\hat{f}_{j-\frac{1}{2}}=(f_{*r})_{j-\frac{1}{2}}=f(\mathbf{w}^n_j)+\sigma_{j-\frac{1}{2},r}\left((\mathbf{w}_{*r})_{j-\frac{1}{2}}-\mathbf{w}^n_j \right),\\
&\hat{f}_{j+\frac{1}{2}}=(f_{*r})_{j+\frac{1}{2}}=(f_{*l})_{j+\frac{1}{2}}+\sigma_{j+\frac{1}{2},*}\left((\mathbf{w}_{*r})_{j+\frac{1}{2}}-(\mathbf{w}_{*l})_{j+\frac{1}{2}}\right),
\end{align*}
then
\begin{align*}
\mathbf{w}^{n+1}_j=&\mathbf{w}^n_j-\lambda\left((f_{*r})_{j+\frac{1}{2}}-(f_{*r})_{j-\frac{1}{2}} \right)\\
=&\left(1+\lambda\sigma_{j+\frac{1}{2},l}-\lambda\sigma_{j-\frac{1}{2},r}\right)\mathbf{w}^n_j+\left(-\sigma_{j+\frac{1}{2},*}\right)(\mathbf{w}_{*r})_{j+\frac{1}{2}}\\
&+\lambda\left(\sigma_{j+\frac{1}{2},*}-\sigma_{j+\frac{1}{2},l}\right)(\mathbf{w}_{*l})_{j+\frac{1}{2}}+\lambda\sigma_{j-\frac{1}{2},r}(\mathbf{w}_{*r})_{j-\frac{1}{2}}.
\end{align*}

\underline{\textit{Case 7}}: If $\sigma_{j-\frac{1}{2},*}\leq 0\leq \sigma_{j-\frac{1}{2},r}$ and $\sigma_{j+\frac{1}{2},l}\leq 0\leq \sigma_{j+\frac{1}{2},*}$, then 
\begin{align*}
&\hat{f}_{j-\frac{1}{2}}=(f_{*r})_{j-\frac{1}{2}}=f(\mathbf{w}^n_j)+\sigma_{j-\frac{1}{2},r}\left((\mathbf{w}_{*r})_{j-\frac{1}{2}}-\mathbf{w}^n_j \right),\\
&\hat{f}_{j+\frac{1}{2}}=(f_{*l})_{j+\frac{1}{2}}=f(\mathbf{w}^n_j)+\sigma _{j+\frac{1}{2},l}\left((\mathbf{w}_{*l})_{j+\frac{1}{2}}-\mathbf{w}^n_j\right),
\end{align*}
and
\begin{align*}
\mathbf{w}^{n+1}_j=&\mathbf{w}^n_j-\lambda\left((f_{*r})_{j+\frac{1}{2}}-(f_{*l})_{j-\frac{1}{2}} \right)\\
=&\left(1+\lambda\sigma_{j+\frac{1}{2},l}-\lambda\sigma_{j-\frac{1}{2},r} \right)\mathbf{w}^n_j+\left(-\lambda\sigma_{j+\frac{1}{2},l}\right)(\mathbf{w}_{*l})_{j+\frac{1}{2}}+\lambda\sigma_{j-\frac{1}{2},r}(\mathbf{w}_{*r})_{j-\frac{1}{2}}.
\end{align*}

\underline{\textit{Case 8}}: If $\sigma_{j-\frac{1}{2},l}\leq 0\leq \sigma_{j-\frac{1}{2},*}$ and $\sigma_{j+\frac{1}{2},*}\leq 0\leq \sigma_{j+\frac{1}{2},r}$, then 
\begin{align*}
&\hat{f}_{j-\frac{1}{2}}=(f_{*l})_{j-\frac{1}{2}}=(f_{*r})_{j-\frac{1}{2}}-\sigma_{j-\frac{1}{2},*}\left((\mathbf{w}_{*r})_{j-\frac{1}{2}}-(\mathbf{w}_{*l})_{j-\frac{1}{2}} \right), \\
&\hat{f}_{j+\frac{1}{2}}=(f_{*r})_{j+\frac{1}{2}}=(f_{*l})_{j+\frac{1}{2}}+\sigma_{j+\frac{1}{2},*}\left((\mathbf{w}_{*r})_{j+\frac{1}{2}}-(\mathbf{w}_{*l})_{j+\frac{1}{2}}\right),
\end{align*}
and
\begin{align*}
\mathbf{w}^{n+1}_j=&\mathbf{w}^n_j-\lambda\left((f_{*r})_{j+\frac{1}{2}}-(f_{*l})_{j-\frac{1}{2}} \right)\\
=&\left(1+\lambda\sigma_{j+\frac{1}{2},l}-\lambda\sigma_{j-\frac{1}{2},r}\right)\mathbf{w}^n_j+\lambda\sigma_{j-\frac{1}{2},*}(\mathbf{w}_{*l})_{j-\frac{1}{2}}+\left(-\lambda\sigma_{j+\frac{1}{2},*} \right)(\mathbf{w}_{*r})_{j+\frac{1}{2}}\\
&+\left(\lambda\sigma_{j+\frac{1}{2},*}-\lambda\sigma_{j+\frac{1}{2},l} \right)(\mathbf{w}_{*l})_{j+\frac{1}{2}}+\left(\lambda\sigma_{j-\frac{1}{2},r}-\lambda\sigma_{j-\frac{1}{2},*} \right)(\mathbf{w}_{*r})_{j-\frac{1}{2}}.
\end{align*}

\underline{\textit{Case 9}}: If $\sigma_{j-\frac{1}{2},*}\leq 0\leq \sigma_{j-\frac{1}{2},r}$ and $0\leq \sigma_{j+\frac{1}{2},l}$, then
\begin{align*}
\hat{f}_{j-\frac{1}{2}}=(f_{*r})_{j-\frac{1}{2}}=f(\mathbf{w}^n_j)+\sigma_{j-\frac{1}{2},r}\left((\mathbf{w}_{*r})_{j-\frac{1}{2}}-\mathbf{w}^n_j \right), \quad \hat{f}_{j+\frac{1}{2}}=f(\mathbf{w}^n_j),
\end{align*}
and
\begin{align*}
\mathbf{w}^{n+1}_j=&\mathbf{w}^n_j-\lambda\left(f(\mathbf{w}^n_j)- (f_{*r})_{j-\frac{1}{2}}\right)\\
=&\left(1-\lambda\sigma_{j-\frac{1}{2},r}\right)\mathbf{w}^n_j+\lambda\sigma_{j-\frac{1}{2},r}(\mathbf{w}_{*r})_{j-\frac{1}{2}}.
\end{align*}

\underline{\textit{Case 10}}: If $\sigma_{j-\frac{1}{2},l}\leq 0\leq \sigma_{j-\frac{1}{2},*}$ and $0\leq \sigma_{j+\frac{1}{2},l}$, then
\begin{align*}
\hat{f}_{j-\frac{1}{2}}=(f_{*l})_{j-\frac{1}{2}}=(f_{*r})_{j-\frac{1}{2}}-\sigma_{j-\frac{1}{2},*}\left((\mathbf{w}_{*r})_{j-\frac{1}{2}}-(\mathbf{w}_{*l})_{j-\frac{1}{2}} \right), \quad \hat{f}_{j+\frac{1}{2}}=f(\mathbf{w}^n_j),
\end{align*}
and
\begin{align*}
\mathbf{w}^{n+1}_j=&\mathbf{w}^n_j-\lambda\left(f(\mathbf{w}^n_j)- (f_{*l})_{j-\frac{1}{2}}\right)\\
=&\left(1-\lambda\sigma_{j-\frac{1}{2},r} \right)\mathbf{w}^n_j+\left(\lambda_{j-\frac{1}{2},r}-\lambda\sigma_{j-\frac{1}{2},*}\right)(\mathbf{w}_{*r})_{j-\frac{1}{2}}+\lambda\sigma_{j-\frac{1}{2},*}(\mathbf{w}_{*l})_{j-\frac{1}{2}}.
\end{align*}

\underline{\textit{Case 11}}: If $\sigma_{j-\frac{1}{2},r}\leq 0$ and $\sigma_{j+\frac{1}{2},l}\leq 0\leq \sigma_{j+\frac{1}{2},*}$, then
\begin{align*}
\hat{f}_{j-\frac{1}{2}}=f(\mathbf{w}^n_j), \quad \hat{f}_{j+\frac{1}{2}}=(f_{*l})_{j+\frac{1}{2}}=f(\mathbf{w}^n_j)+\sigma_{j+\frac{1}{2},l}\left((\mathbf{w}_{*l})_{j+\frac{1}{2}}-\mathbf{w}^n_j\right),
\end{align*}
and
\begin{align*}
\mathbf{w}^{n+1}_j=&\mathbf{w}^n_j-\lambda\left((f_{*l})_{j+\frac{1}{2}}-f(\mathbf{w}^n_j)\right)\\
=&\left(1-\lambda\sigma_{j+\frac{1}{2},l}\right)\mathbf{w}^n_j+\left(-\lambda\sigma_{j+\frac{1}{2},l}\right)(\mathbf{w}_{*l})_{j+\frac{1}{2}}.
\end{align*}

\underline{\textit{Case 12}}: If $\sigma_{j-\frac{1}{2},r}\leq 0$ and $\sigma_{j+\frac{1}{2},*}\leq 0\leq \sigma_{j+\frac{1}{2},r}$, then
\begin{align*}
\hat{f}_{j-\frac{1}{2}}=f(\mathbf{w}^n_j), \quad \hat{f}_{j+\frac{1}{2}}=(f_{*r})_{j+\frac{1}{2}}=(f_{*l})_{j+\frac{1}{2}}+\sigma_{j+\frac{1}{2},*}\left((\mathbf{w}_{*r})_{j+\frac{1}{2}}-(\mathbf{w}_{*l})_{j+\frac{1}{2}}\right),
\end{align*}
and
\begin{align*}
\mathbf{w}^{n+1}_j=&\mathbf{w}^n_j-\lambda\left((f_{*r})_{j+\frac{1}{2}}-f(\mathbf{w}^n_j)\right)\\
=&\left(1-\lambda\sigma_{j+\frac{1}{2},l}\right)\mathbf{w}^n_j+\left(\lambda\sigma_{j+\frac{1}{2},*}-\lambda\sigma_{j+\frac{1}{2},l}\right)(\mathbf{w}_{*l})_{j+\frac{1}{2}}+\left(-\lambda\sigma_{j+\frac{1}{2},*}\right)(\mathbf{w}_{*r})_{j+\frac{1}{2}}.
\end{align*}
For $\mathbf{w}^n_j, \mathbf{w}^n_{j\pm 1}\in \Sigma$, the intermediate states $(\mathbf{w}_{*r})_k$ and $(\mathbf{w}_{*l})_k$, $k=j\pm \frac{1}{2}$ are in $\Sigma$. Also notice that  $\hat{\mathbf{w}}$ in Case 1-4 are all in the form of (\ref{id}), the cell average of some exact Riemann solutions, and therefore lie in $\Sigma _0$ by Lemma \ref{U+}. Hence $\lambda\sigma\leq \frac{1}{2}$ is a sufficient condition for $\mathbf{w}^{n+1}_j$ defined in (\ref{1stFV}) to be in $\Sigma _0$. 

\end{proof}

\bibliographystyle{plain}

\end{document}